\numberwithin{equation}{subsection}
\theoremstyle{plain}
\newtheorem{thm}[subsection]{Theorem}
\newtheorem{prop}[subsection]{Proposition}
\newtheorem{lemma}[subsection]{Lemma}
\newtheorem{cor}[subsection]{Corollary}
\theoremstyle{definition}
\newtheorem{defn}[subsection]{Definition}
\newtheorem{notn}[subsection]{Notation}
\newtheorem{cont}[subsection]{Contents}
\newtheorem{ackn}[subsection]{Acknowledgement}
\theoremstyle{remark}
\newtheorem{rem}[subsection]{Remark}
\begin{document}
\title{The Manin constant of elliptic curves over function fields}
\author{Ambrus P\'al}
\footnotetext[1]{\it 2000 Mathematics Subject Classification. \rm11G05
(primary), 11G40, 14F30 (secondary).}
\date{January 9, 2010.}
\address{Department of Mathematics, 180 Queen's Gate, Imperial College, London, SW7 2AZ, United Kingdom}
\email {a.pal@imperial.ac.uk}
\begin{abstract} We study the $p$-adic valuation of the values of normalised Hecke eigenforms attached to non-isotrivial elliptic curves defined over function fields of transcendence degree one over finite fields of characteristic $p$. We derive upper bounds on the smallest attained valuation in terms of the minimal discriminant under a certain assumption on the function field
and provide examples to show that our estimates are optimal. As an application of our results we also prove the analogue of the degree conjecture unconditionally for strong Weil curves with square-free conductor defined over function fields satisfying the assumption mentioned above.
\end{abstract}
\maketitle

\section{Introduction}

\begin{notn} Let $F$ denote the function field of $\mathcal C$, where the latter is a geometrically connected smooth projective curve defined over the finite field $\mathbb F_q$ of characteristic $p$. Let $\mathbb A$ denote the ring of ad\`eles of $F$ and let $GL_2$ denote the group scheme of invertible two by two matrices. Let $E$ be a non-isotrivial elliptic curve defined over $F$. Then we may associate a cuspidal automorphic representation of $GL_2(\mathbb A)$ to $E$ as follows. Let $E_{\overline F}$ denote the base change of $E$ to the separable closure $ \overline F$ of $F$. For every prime $l$ different from $p$ one may attach to the \'etale cohomology group $H^1(E_{ \overline F},\mathbb Q_l(1))$, considered as a representation of the absolute Galois group of $F$, an irreducible cuspidal automorphic representation $\rho_E$ with trivial central character via the Langlands correspondence. As the notation indicates this representation is independent of the choice of $l$. 

Let $V_E$ denote the irreducible constituent of the space of cuspidal automorphic forms on $GL_2(\mathbb A)$ which realises the representation $\rho_E$. Then there is a distinguished element $\psi_E$ of $V_E$ which we will call the normalised Hecke eigenform attached to $E$. It is characterised by the fact that it is invariant under the action of the Hecke congruence group of level $\mathfrak n$, where $\mathfrak n$ denotes the conductor of the elliptic curve $E$, and its leading Fourier coefficient is $1$. (For an explanation of
these concepts as well as an explicit description of the Hecke eigenform see the following chapter.) By a classical theorem of Harder the automorphic form $\psi_E$ takes only finitely many values. On the other hand it is easy to see that it takes only rational values. Hence there is a unique positive rational number $c(E)$ such that the subgroup of $\mathbb Q$ generated by the values of $\psi_E$ is equal to $c(E)\mathbb Z$. Then the following holds: 
\end{notn}
\begin{prop} There is a natural number $m(E)$ such that
$c(E)=p^{-m(E)}$.
\end{prop}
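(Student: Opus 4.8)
The plan is to reduce the proposition to the assertion that, for every prime $\ell\neq p$, the values of $\psi_E$ are $\ell$-adic integers, and then to prove that assertion by expanding $\psi_E$ into a Whittaker series and tracking denominators place by place.

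\emph{Reduction.} In the explicit description of $\psi_E$ (to be recalled in the next chapter) the leading Fourier coefficient is one of the values of $\psi_E$, so $1$ lies in the group generated by the values of $\psi_E$; hence $c(E)\mathbb Z\ni 1$ and $c(E)=N^{-1}$ for a unique positive integer $N$. It then suffices to show that for every prime $\ell\neq p$ all values of $\psi_E$ lie in $\mathbb Z_{(\ell)}$, for this forces $\ell\nmid N$; then $N=p^{m(E)}$ with $m(E)=v_p(N)$ and $c(E)=p^{-m(E)}$.

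\emph{Localisation via the Whittaker expansion.} Fix a prime $\ell\neq p$ and expand $\psi_E$ in its Fourier--Whittaker series
\[
\psi_E(g)\;=\;\sum_{\alpha\in F^{\times}} W\!\left(\begin{pmatrix}\alpha & 0\\ 0 & 1\end{pmatrix} g\right),\qquad W\;=\;\bigotimes_{v} W_{v},
\]
where $W$ is the Whittaker function of the new vector relative to a fixed nontrivial additive character of $\mathbb A/F$, normalised by $W_v(1)=1$ at each place $v$; this makes the product meaningful, since $\rho_{E,v}$ is unramified for almost every $v$, and recovers the normalisation of $\psi_E$. For fixed $g$ only finitely many $\alpha$ contribute, so it is enough to show that every local Whittaker value $W_v(h)$, $h\in GL_2(F_v)$, lies in $\overline{\mathbb Z}[1/p]$, the ring of algebraic integers with $p$ inverted; then each term of the sum, hence $\psi_E(g)$ itself, lies there, and since $\psi_E(g)\in\mathbb Q$ and $\overline{\mathbb Z}[1/p]\cap\mathbb Q=\mathbb Z[1/p]\subset\mathbb Z_{(\ell)}$, the $\ell$-integrality follows. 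At an unramified $v$ one may take $h$ upper triangular times an element of $GL_2(\mathcal O_v)$, and then $W_v(h)$ is a value of the additive character (a root of unity of $p$-power order) times a Fourier coefficient of $\psi_E$, which is an integer because it is produced from Frobenius traces by the Hecke recursion. At a ramified $v$ — where, according to the reduction type of $E$ at $v$, $\rho_{E,v}$ is special, a ramified principal series, or supercuspidal — the Whittaker function of the new vector is, up to translation by $GL_2(\mathcal O_v)$, supported on a finite set, and its values are built from additive-character values, Gauss sums, and powers of $q_v$; as $q_v$ is a power of $p$, and Gauss sums are algebraic integers that are units away from $p$, these again lie in $\overline{\mathbb Z}[1/p]$.

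\emph{The main obstacle.} The serious point is exactly this last local computation at the ramified places: one must check, case by case in the local Langlands classification, and in particular for the wildly ramified supercuspidals that occur in small residue characteristic, that translating the new vector by the whole group $GL_2(\mathcal O_v)$ introduces no denominator beyond a power of $q_v$. The structural reason behind this is that $\rho_{E,v}$ is the component at $v$ of the automorphic representation attached to $H^1(E_{\overline F},\mathbb Q_\ell(1))$, which carries the $\mathrm{Gal}$-stable lattice $H^1(E_{\overline F},\mathbb Z_\ell(1))$ for every $\ell\neq p$; the new vector, and hence its Whittaker function, is defined over the induced $\mathbb Z_{(\ell)}$-structure up to an $\ell$-adic unit, which is precisely the required $\ell$-integrality. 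Granting this, the global assembly of the preceding paragraph is routine, and running it over all primes $\ell\neq p$ completes the proof.
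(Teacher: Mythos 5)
Your approach — factoring $\psi_E$ through its Whittaker model and chasing $\ell$-integrality place by place — is genuinely different from the paper's, and it has a real gap that you yourself flag but do not close. The paper never touches local new-vector Whittaker functions. Instead, it uses the approximation theorem $GL_2(\mathbb A)=GL_2(F)P(\mathbb A)Z(\mathbb A)\mathbb K_0(\mathfrak n)$ to reduce to values on $P(\mathbb A)$, then writes the \emph{global} Fourier expansion
$$\psi_E\!\left(\begin{matrix} y&x\\0&1\end{matrix}\right)
=\sum_{\eta\in F^*}a(\eta y\mathfrak d^{-1})\tau(\eta x)
=\sum_{\eta\in S}a(\eta y\mathfrak d^{-1})\Bigl(\sum_{\epsilon\in\mathbb F_q^*}\tau(\eta\epsilon x)\Bigr),$$
grouping $F^*$ into $\mathbb F_q^*$-cosets $S$. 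The inner character sums are $-1$ or $q-1$ — \emph{integers}, not merely algebraic integers — because one is summing a nontrivial (or trivial) additive character over all of $\mathbb F_q^*$. Since the coefficients $a(\mathfrak m)\in\mathbb Z[1/p]$ come straight from the Hasse--Weil local factors, this immediately gives $\psi_E(h)\in\mathbb Z[1/p]$ for $h\in P(\mathbb A)$, and hence everywhere; rationality is proved \emph{en route}, not assumed. The grouping trick is the whole point: it avoids ever having to argue about the $p$-power roots of unity $\tau(\eta x)$ term by term, which is exactly the issue your proposal is forced to confront.

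Concretely, the gap in your argument: you reduce to the claim that, for each place $v$ dividing $\mathfrak n$, the new-vector Whittaker function $W_v$ takes values in $\overline{\mathbb Z}[1/p]$ on all of $GL_2(F_v)$, and you write that "one must check, case by case in the local Langlands classification" and that the "structural reason behind this" is the $\text{Gal}$-stable lattice on $H^1(E_{\overline F},\mathbb Z_\ell(1))$. Neither statement is a proof. The local Langlands correspondence does not transport integral structures to the Whittaker model in any elementary way, and in residue characteristic $2$ or $3$ the supercuspidal and wildly ramified components genuinely require explicit control of epsilon-factors and Kirillov-model formulas; this is far more work than the proposition warrants, and you have not done it. A second, smaller problem is your reduction: you assert that "the leading Fourier coefficient is one of the values of $\psi_E$," but the Fourier coefficient $a(\mathfrak m\mathfrak d)$ is an \emph{integral} of $\psi_E$ against $\tau$, not a pointwise value, so it does not a priori lie in the subgroup generated by the values. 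The paper instead exhibits $-1$ as an actual value, via
$$\psi_E\!\left(\begin{matrix}\overline{\mathfrak d}&x\\0&1\end{matrix}\right)=\sum_{\epsilon\in\mathbb F_q^*}\tau(\epsilon x)=-1\quad\text{for }x\notin\mathcal D,$$
which is what is needed to conclude $L(E)\supseteq\mathbb Z$. I'd encourage you to adopt the paper's grouping-by-$\mathbb F_q^*$ device: it sidesteps the entire local analysis and yields integrality and rationality in one stroke.
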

It is natural to guess that $m(E)$, which we will call the Manin constant of $E$, is always zero. Although this hypothesis is frequently made (sometimes implicitly) in the literature (see for example [20], [21] and [23]) it is actually false. One of the aims of this paper is to exhibit many cases when $m(E)$ is not zero. Because the Manin constant could be non-zero many formulas in the literature have to be corrected to include this non-trivial factor. Hence the latter is a very interesting isogeny invariant of the elliptic curve, therefore it is desirable to compute it, or at least to give upper bounds, in terms of more well-known invariants. This is the other major aim of this paper. (This problem has been already studied by Tan in [28], see Remark 7.9 below). We will also discuss the implication of our results in relation with one of the formulas mentioned above. 

Let us formulate now the main results of this paper. For every $E$ as above let $\Delta_E$ denote the discriminant of a relatively minimal elliptic surface $\mathcal E\rightarrow\mathcal C$ whose generic fibre is $E$. Then $\Delta_E$ is an effective divisor on the curve $\mathcal C$. Moreover let $g$ denote the genus of $\mathcal C$ and let $d$ be the positive integer such that $q=p^d$. We will show the following:
\begin{thm} Assume that $p$ does not divide the order of $\text{\rm Pic}_0(\mathcal C)(\mathbb F_q)$. Then we have:
$$m(E)\leq d(\frac{1}{12}\deg(\Delta_E)+g-1),$$
and the two sides of the inequality above are equal when the elliptic surface $\mathcal E$ is ordinary in dimension $2$.
\end{thm}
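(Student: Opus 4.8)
The plan is to reduce the inequality to a computation in the crystalline cohomology of the relatively minimal elliptic surface $f\colon\mathcal E\to\mathcal C$ with generic fibre $E$, using Noether's formula to recognise the right-hand side. I start from the explicit description of $\psi_E$ recalled in the next section: $\psi_E$ is a harmonic cochain on the Bruhat--Tits building of $GL_2$ over the ad\`eles, equivalently a function on the isomorphism classes of rank-$2$ vector bundles on $\mathcal C$ carrying a $\Gamma_0(\mathfrak n)$-structure, and in its Fourier expansion, normalised so that the leading coefficient is $1$, the remaining coefficients are --- up to the powers of $q$ forced by the harmonicity relations --- the Hecke eigenvalues $a_{\mathfrak p}(E)$ (the traces of Frobenius at the places $\mathfrak p$ of good reduction), which are rational integers. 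Thus $p^{m(E)}=c(E)^{-1}$ is exactly the index of the lattice spanned by the \emph{values} of $\psi_E$ inside the lattice dual to its Fourier coefficients, and the problem becomes that of computing this index geometrically.

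First I would realise that index inside the cohomology of the relevant modular object. Via the Eichler--Shimura and Manin--Drinfeld formalism over function fields, the $\rho_E$-isotypic part of the module of $\mathbb Z$-valued level-$\mathfrak n$ automorphic forms embeds into the cohomology of the moduli space of rank-$2$ bundles on $\mathcal C$ with $\Gamma_0(\mathfrak n)$-structure, and, by Grothendieck's description of $L(E/F,s)$, the eigenvalue system $\{a_{\mathfrak p}(E)\}$ cuts out precisely the non-trivial factor of $H^2$ of $\mathcal E$. On that factor the ``Fourier-coefficient'' lattice and the ``values'' lattice become two natural integral structures --- one of automorphic origin, the other coming from the N\'eron model of $E$, i.e. from the fundamental line bundle $\mathfrak f:=(R^1f_*\mathcal O_{\mathcal E})^\vee$ on $\mathcal C$ --- and $p^{m(E)}$ is their index. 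This is exactly where the hypothesis that $p$ does not divide $\#\mathrm{Pic}_0(\mathcal C)(\mathbb F_q)$ is used: the orthogonal complement of the cuspidal part is spanned by Eisenstein series, whose integral structure is governed by $\mathrm{Pic}(\mathcal C)(\mathbb F_q)$ and by the component groups at the places dividing $\mathfrak n$, and the hypothesis forces this complement to be $p$-adically negligible, so that the index is concentrated on the $E$-part of $H^2$.

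The heart of the argument is the evaluation of that index. After Serre duality the $E$-part is built from $H^2(\mathcal E,\mathcal O_{\mathcal E})\cong H^1(\mathcal C,R^1f_*\mathcal O_{\mathcal E})$, a space of dimension $p_g(\mathcal E)$, and the change of integral structure is effected by the crystalline Frobenius --- concretely, by iterating $d$ times the Hasse--Witt operator, the composite of the $p$-power Frobenius with cup product by the Hasse invariant $A\in H^0(\mathcal C,\mathfrak f^{\otimes(p-1)})$ of the surface. Writing $\lambda_1,\dots,\lambda_r$ for the Newton slopes of $H^2_{\mathrm{cris}}(\mathcal E/W(\mathbb F_q))$ that are strictly less than $1$, the comparison should yield $m(E)=d\sum_{i=1}^r(1-\lambda_i)$. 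Mazur's inequality --- the Newton polygon lies on or above the Hodge polygon and has the same endpoints --- gives $r\leq h^{0,2}(\mathcal E)=p_g(\mathcal E)$, whence $m(E)\leq d\,p_g(\mathcal E)$, with equality precisely when $r=p_g(\mathcal E)$ and all these slopes vanish, that is, when $\mathcal E$ is ordinary in dimension $2$.

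It remains to identify the right-hand side. For the relatively minimal elliptic surface $f\colon\mathcal E\to\mathcal C$ one has $\deg\Delta_E=12\deg(\mathfrak f)=12\chi(\mathcal O_{\mathcal E})$, and since $\deg\mathfrak f>0$ in the non-isotrivial case $h^1(\mathcal O_{\mathcal E})=g$; hence $p_g(\mathcal E)=\chi(\mathcal O_{\mathcal E})-1+g=\tfrac1{12}\deg\Delta_E+g-1$, which converts the estimate into the asserted inequality together with its equality statement. The main obstacle is the content of the two middle steps: constructing the integral comparison between the automorphic normalisation of $\psi_E$ and the geometric one, and proving that the comparison isomorphism really is the crystalline Frobenius on $H^2$ of $\mathcal E$ --- in effect, that the automorphic Manin constant agrees with the motivic one. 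This needs an integral refinement of the correspondence between $\psi_E$ and the cohomology of the moduli space, together with careful control of the Eisenstein contribution, which is the role played by the hypothesis on $\mathrm{Pic}_0(\mathcal C)$.
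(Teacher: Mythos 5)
Your final bookkeeping is right: the Noether-type identity $\chi(\mathcal O_{\mathcal E})=\tfrac1{12}\deg\Delta_E$ together with $h^1(\mathcal O_{\mathcal E})=g$ (from the isomorphism $\mathrm{Pic}^0(\mathcal C)\cong\mathrm{Pic}^0(\mathcal E)$) does give $h^2(\mathcal O_{\mathcal E})=\tfrac1{12}\deg\Delta_E+g-1$, and the non-equivariant Katz--Mazur slope inequality, applied to $H^2_{\mathrm{cris}}$ of $\mathcal E$, does produce $l_q(L(E,t))\leq h^2(\mathcal O_{\mathcal E})$, with equality precisely in the ordinary case. That yields the \emph{lower} bound $m(E)\geq d(\tfrac1{12}\deg\Delta_E+g-1)$ in the ordinary case, since the trivial character alone already forces $m(E)\geq d\,l_q(L(E,t))$.

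But the \emph{upper} bound is where your argument has a real gap, and it is not the gap you flag. The identity $m(E)=d\sum_i(1-\lambda_i)$ (slopes of $H^2_{\mathrm{cris}}(\mathcal E)$) that you propose to prove via a crystalline comparison is false as stated: the Manin constant is governed not just by $L(E,t)$ but by the $p$-adic valuations of the coefficients of \emph{all} twists $L(\sigma_E\otimes\alpha,t)$ by tamely ramified finite-order characters $\alpha$ unramified at $\mathfrak n$. Concretely, the values of $\psi_E$ at the representatives $\left(\begin{smallmatrix}y&y\widehat c\\0&1\end{smallmatrix}\right)$ of $P(\mathbb A)$ are organised, after class field theory, by characters of the ray class group $\mathbb A^*/F^*\mathcal O_{\mathfrak c}$, and the toric integrals against such $\alpha$ unwind to $\alpha(\mathfrak d^2\mathfrak c')\,\epsilon(\alpha^{-1})\,(z^2/q)^{g-1}\,G(E,\alpha,\mathfrak c,z)\,L(\sigma_E\otimes\alpha,zq^{-1})$. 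This forces the exact formula $m(E)=d\cdot\sup_{\alpha}\big(l_q(L(\sigma_E\otimes\alpha,t))+g-1-v_q(\epsilon(\alpha^{-1}))\big)$ under the $\mathrm{Pic}_0$ hypothesis. The hypothesis $p\nmid\#\mathrm{Pic}_0(\mathcal C)(\mathbb F_q)$ is not a statement about Eisenstein contributions being negligible; it enters because it makes the finite part $G$ of the ray class group have order prime to $p$, so the identity $p^{m(E)}\mathbb Z=$ (lattice of values) can be tested character by character. Without the hypothesis you do not get equality in the formula above, only an inequality in the wrong direction for the claim.

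Consequently, to bound $m(E)$ from above you must bound $l_q(L(\sigma_E\otimes\alpha,t))-v_q(\epsilon(\alpha^{-1}))$ uniformly in $\alpha$, not just for $\alpha=1$. This requires (a) an \emph{equivariant} refinement of the Newton-above-Hodge inequality on the $\alpha$-isotypic piece of $H^2$ of the base-changed surface $\mathcal E'\to X$, where $X\to\mathcal C$ is the tame cover cut out by $\alpha$, and (b) Chinburg's formula for the Galois module class of $H^2(\mathcal E',\mathcal O_{\mathcal E'})$ in terms of $\epsilon$-constants, which converts $\dim H^2(\mathcal E',\mathcal O_{\mathcal E'})^{\alpha}$ into $\tfrac1{12}\deg\Delta_E+v_q(\epsilon(\alpha^{-1}))$. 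Your proposal contains neither ingredient. The ``direct crystalline comparison'' between the automorphic lattice of values of $\psi_E$ and a geometric lattice in $H^2(\mathcal E,\mathcal O_{\mathcal E})$, which you rightly flag as the main obstacle, is not what closes the argument; no such comparison is constructed. Everything is routed through $p$-adic valuations of twisted $L$-coefficients, epsilon constants, and equivariant slope estimates over covers.
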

The condition on $\text{\rm Pic}_0(\mathcal C)(\mathbb F_q)$ in the theorem above is satisfied for example when $\mathcal C$ is a rational curve or a supersingular elliptic curve. Moreover the moduli space of smooth projective connected curves of genus $g$ with $p$-rank zero is a variety of dimension $2g-3$ over $\overline{\mathbb F}_p$ when $g\geq2$ and $p$ is odd (see Theorem 2.3 and Proposition 2.7 of [4] on pages 120 and 122, respectively). Hence there are plenty of curves satisfying this condition. It is natural to expect that most elliptic surfaces are ordinary in dimension $2$ (for a precise formulation of this conjecture see Remark 6.12). In particular our estimate in Theorem 1.3 should be the best possible (at least if we want to make one in terms of the discriminant). We can verify the ordinariness condition in many cases. The following result is just a sample of what can be proven with our methods. 
\begin{thm} Let $p$ be a prime number and let $n$ be a positive integer such that $n|p-1$ and $6|n$. Let $E$ be the elliptic curve defined over the rational function field $F=\mathbb F_p(T)$ by the Weierstrass equation
$$y^2+xy=x^3-T^n.$$
Then $E$ is not isotrivial and 
$$m(E)=\frac{n}{6}-1=\frac{1}{12}\deg(\Delta_E)-1.$$
\end{thm}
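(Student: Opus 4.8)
The plan is to derive the bound $m(E)\le\frac n6-1$ from Theorem 1.3 after an explicit discriminant computation, and then to prove the reverse inequality by checking that the relatively minimal elliptic surface $\mathcal E\to\mathbb P^1$ with generic fibre $E$ is ordinary in dimension $2$. First I would compute the standard invariants of $y^2+xy=x^3-T^n$, obtaining $c_4=1$, $c_6=864T^n-1$ and $\Delta=T^n(1-432T^n)$, so $j(E)=(T^n-432T^{2n})^{-1}$. Since $6\mid n\mid p-1$ forces $p\equiv1\pmod 6$, hence $p\ge7$, the prime $p$ divides neither $n$ nor $432$; thus $j(E)$ is non-constant (so $E$ is non-isotrivial) and $1-432T^n$ is separable. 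As $c_4$ is a unit at every finite place, the given model is minimal there, with multiplicative reduction of type $I_n$ at $T=0$ and type $I_1$ at each of the $n$ distinct points where $T^n=1/432$. At $T=\infty$ I would put $T=1/S$ and apply the admissible coordinate change with scaling factor $u=S^{-n/6}$ (legitimate as $6\mid n$), getting the minimal model $y^2+S^{n/6}xy=x^3-1$ of discriminant $S^n-432$, a unit at $S=0$; so $E$ has good reduction at $\infty$. Hence $\Delta_E=n\,[T=0]+\sum_{\zeta^n=1/432}[T=\zeta]$ has degree $2n$. Since $\mathcal C=\mathbb P^1$ gives $g=0$, $d=1$ and $\mathrm{Pic}_0(\mathbb P^1)(\mathbb F_p)$ trivial, Theorem 1.3 applies and yields
$$m(E)\le\tfrac1{12}\deg(\Delta_E)+g-1=\tfrac n6-1=\tfrac1{12}\deg(\Delta_E)-1.$$

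By the equality clause of Theorem 1.3 it remains to show $\mathcal E$ is ordinary in dimension $2$. First I would note $h^1(\mathcal E,\mathcal O_{\mathcal E})=0$: by the Leray spectral sequence for the structure map $\pi\colon\mathcal E\to\mathbb P^1$ this equals $H^0(\mathbb P^1,R^1\pi_*\mathcal O_{\mathcal E})$, and since all bad fibres are tame ($p\ge7$) the sheaf $R^1\pi_*\mathcal O_{\mathcal E}$ is a line bundle of degree $-\chi(\mathcal O_{\mathcal E})=-\tfrac1{12}\deg(\Delta_E)=-\tfrac n6<0$. The same spectral sequence identifies $H^2(\mathcal E,\mathcal O_{\mathcal E})$, Frobenius-equivariantly, with $H^1(\mathbb P^1,R^1\pi_*\mathcal O_{\mathcal E})\cong H^1(\mathbb P^1,\mathcal O(-n/6))$, of dimension $\tfrac n6-1$. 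As $q(\mathcal E)=0$, the surface $\mathcal E$ is ordinary in dimension $2$ precisely when the semilinear Hasse--Witt operator on this space is bijective; and (as recalled in Section 6) this operator is Frobenius-pullback on $H^1(\mathbb P^1,-)$ followed by cup product with the Hasse invariant $\mathbf A\in H^0(\mathbb P^1,\mathcal O(n(p-1)/6))$ of the family, which --- because the model is minimal over $\mathbb A^1_T$ --- is represented over $\mathbb A^1_T$ by the polynomial $A(T)$ equal to the coefficient of $x^{p-1}$ in $\bigl(x^3+\tfrac14x^2-T^n\bigr)^{(p-1)/2}$ (using the short model obtained by completing the square, valid as $p\ne2$).

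Expanding by the multinomial theorem, the terms contributing $x^{p-1}$ are those with the exponent of $x^3$ equal to $2t$, of $\tfrac14x^2$ equal to $\tfrac{p-1}2-3t$, and of $-T^n$ equal to $t$, for $0\le t\le\tfrac{p-1}6$, so
$$A(T)=\sum_{t=0}^{(p-1)/6}\binom{(p-1)/2}{2t,\ (p-1)/2-3t,\ t}\,4^{\,3t-(p-1)/2}\,(-1)^t\,T^{nt}.$$
Each multinomial coefficient has nonnegative integer entries summing to $\tfrac{p-1}2<p$, hence (by Lucas's theorem, or directly) is a unit in $\mathbb F_p$; writing $A(T)=\sum_t a_t T^{nt}$, every $a_t$ with $0\le t\le\tfrac{p-1}6$ is a unit. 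In the standard monomial basis $T^{-1},\dots,T^{-(n/6-1)}$ of $H^1(\mathbb P^1,\mathcal O(-n/6))$, the $(i,j)$-entry of the Hasse--Witt matrix is $a_t$ with $nt=pj-i$ if that $t$ is an integer in $[0,\tfrac{p-1}6]$, and $0$ otherwise. Since $n\mid p-1$ we have $pj-i\equiv j-i\pmod n$, and as $1\le i,j\le\tfrac n6-1<n$ this forces $i=j$; so the matrix is diagonal with $j$-th entry $a_{j(p-1)/n}$, a unit. Hence the Hasse--Witt operator is invertible, $\mathcal E$ is ordinary in dimension $2$, and $m(E)=\tfrac n6-1=\tfrac1{12}\deg(\Delta_E)-1$.

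The genuinely substantive point --- and the step I expect to require the most care --- is the description of the surface's Hasse--Witt operator as ``Frobenius-pullback on $H^1(\mathbb P^1,-)$ followed by multiplication by the fibrewise Hasse invariant'', together with correctly tracking the twist of $R^1\pi_*\mathcal O_{\mathcal E}$ at the fibre over $\infty$, where the Weierstrass model is not minimal; once that bookkeeping is in place the conclusion collapses to the one-line diagonality argument above. Both hypotheses enter essentially: $6\mid n$ secures good reduction at $\infty$ and hence the clean Hodge numbers $q=0$, $p_g=\tfrac n6-1$ of $\mathcal E$, while $n\mid p-1$ both makes the relevant multinomial coefficients units and, decisively, renders the Hasse--Witt matrix diagonal.
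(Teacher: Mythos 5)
Your proof is correct as far as I can judge, but takes a genuinely different route from the paper. The paper's own proof (Section 8) goes by way of Ulmer's theorem that $\mathcal E_n$ is birationally equivalent to the quotient $F_n/\Gamma$ of a Fermat surface, applies Ulmer's explicit formula for $\det(1-F^*t\mid H^2(\overline{F_n/\Gamma},\overline{\mathbb Q}_l))$ in terms of Jacobi sums, and then uses Stickelberger's theorem on $p$-adic valuations of Gauss sums to read off $l_q(L(E_n,t))\ge n/6-1$ directly; combined with Proposition 4.5 this gives the lower bound on $m(E_n)$, and Theorem 6.9 supplies the matching upper bound. You instead prove the upper bound the same way (via Theorem 1.3) but obtain the lower bound by invoking the equality clause of Theorem 1.3, which requires verifying that $\mathcal E$ is ordinary in dimension $2$; you do this by an explicit Hasse--Witt computation on $H^2(\mathcal E,\mathcal O_{\mathcal E})\cong H^1(\mathbb P^1,\mathcal O(-n/6))$, with the hypothesis $n\mid p-1$ appearing in the satisfying guise of forcing the Hasse--Witt matrix to be \emph{diagonal} with unit entries. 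I have checked your discriminant computation (giving $\deg\Delta_E=2n$, good reduction at $\infty$ after the admissible change $u=S^{-n/6}$), the multinomial expansion of the fibrewise Hasse invariant, and the diagonality argument ($pj-i\equiv j-i\pmod n$ with $1\le i,j\le n/6-1$ forces $i=j$): these are all correct. The two places where your write-up genuinely leans on facts not proved in the paper are (a) that the Frobenius on $H^1(\mathbb P^1,R^1\pi_*\mathcal O_{\mathcal E})$ is the composite of absolute Frobenius pullback with the $\mathcal O_{\mathbb P^1}$-linear map $F^*R^1\pi_*\mathcal O_{\mathcal E}\to R^1\pi_*\mathcal O_{\mathcal E}$ given by the family Hasse invariant, and (b) that for a surface with $q=0$, bijectivity of the Hasse--Witt operator on $H^2(X,\mathcal O_X)$ implies ordinariness in dimension $2$. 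Both are standard (the first is the compatibility of the Leray spectral sequence with absolute Frobenius together with the identification of the relative Hasse--Witt of the elliptic fibration, the second follows from the slope exact sequence for $W\mathcal O_X$ and Poincar\'e duality plus Katz's inequality), but you should state the precise references since neither is recalled in the paper; you do flag the first as the step requiring care, and that judgement is accurate. It is an instructive parallel that $n\mid p-1$ plays an analogous role in both proofs: in the paper it makes the orbits of multiplication by $q=p$ on $\Gamma^\perp\cap\widehat G'$ singletons so the Stickelberger valuation is a clean telescoping sum, while for you it diagonalises the Hasse--Witt matrix. Your approach is more self-contained in that it avoids the Fermat surface and Gauss sum machinery entirely, at the cost of importing the Hasse--Witt formalism for elliptic fibrations; the paper's approach keeps everything inside the $L$-function framework it has already built in Sections 4--6.
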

The basic strategy of the proof of Theorem 1.3 is to relate the Manin constant to the $p$-adic valuation of coefficients of $L$-functions of $E$. The key tools in estimating the latter are a mild equivariant extension of Katz's conjecture relating the Newton and Hodge polygons and a theorem of Chinburg computing the refined equivariant Euler characteristic of the de Rham complex of varieties equipped with tame group actions in terms of $\epsilon$-constants. The proof of Theorem 1.4 is closely related. In fact the reason why it is particularly convenient to work with those elliptic curves which appear in the theorem is that Ulmer computed their Hasse-Weil $L$-functions rather explicitly in [29]. 

In the rest of the introduction we describe the application of Theorem 1.3 in this paper which was the main motivation for our investigations. Fix a closed point $\infty$ of $\mathcal C$ and assume that $E$ has split multiplicative reduction at $\infty$. Then $\mathfrak n=\mathfrak m\infty$ for an effective divisor $\mathfrak m$ on $\mathcal C$ where we write the addition of divisors multiplicatively in this paper. Let $A$ denote the ring of rational functions on $\mathcal C$ regular away from $\infty$ and let $X_0(\mathfrak m)$ denote the unique smooth projective curve over $F$ which contains the affine Drinfeld modular curve $Y_0(\mathfrak m)$ parameterising Drinfeld $A$-modules of rank two of generic characteristic with Hecke level $\mathfrak m$-structure as a dense open subscheme. Then there is a non-trivial map $\pi:X_0(\mathfrak m)\rightarrow E$ of curves defined over $F$. We say that $E$ is a strong Weil curve if the modular parameterisation $\pi$ above can be chosen so that the kernel of the map induced by $\pi$ via Albanese functoriality is smooth and connected in the Jacobian of $X_0(\mathfrak m)$. In this case we say that $\pi$ is optimal. Up to isomorphism there is exactly one strong Weil curve in the isogeny class of $E$. With the help of Theorem 1.3 and the Pesenti-Szpiro inequality we will show the following
\begin{thm} Assume that $p$ does not divide the order of $\text{\rm Pic}_0(\mathcal C)(\mathbb F_q)$. Also suppose that $\pi$ is optimal and $\mathfrak m$ is square-free. Then we have:
$$\deg(\pi)<q^{18g+4\deg(\infty)+1}\cdot q^{2\deg(\mathfrak m)}\cdot
\deg(\mathfrak m)^3.$$ 
\end{thm}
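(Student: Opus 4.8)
The strategy is to combine three inputs: an explicit formula, coming from the rigid-analytic uniformisation of the Jacobian of $X_0(\mathfrak m)$, that relates $\deg(\pi)$ to the Petersson norm of $\psi_E$ and to the Manin constant $c(E)=p^{-m(E)}$; the bound on $m(E)$ supplied by Theorem 1.3; and the Pesenti--Szpiro inequality, which bounds $\deg(\Delta_E)$ in terms of the conductor. Without the first we cannot express $\deg(\pi)$ arithmetically at all; without the second the Manin constant could a priori be large enough to destroy any bound; and the third is what converts a bound in terms of $\deg(\Delta_E)$ into one in terms of $\deg(\mathfrak m)$ and $\deg(\infty)$.

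First I would set up the degree formula. Since $E$ has split multiplicative reduction at $\infty$ it is a Tate curve there, so $E(F_\infty)\cong F_\infty^*/q_E^{\mathbb Z}$ with $v_\infty(q_E)\geq 1$. By the theory of Gekeler--Reversat the identity component of $J_0(\mathfrak m)(F_\infty)$ admits a rigid-analytic uniformisation by a torus modulo a lattice described combinatorially in terms of the harmonic cochains on the Bruhat--Tits tree $\mathcal T$ of $PGL_2(F_\infty)$ modulo $\Gamma_0(\mathfrak m)$, and the monodromy pairing on this lattice is the Petersson pairing. Using that $\pi$ is optimal, so that the dual map $E\to J_0(\mathfrak m)$ is a closed immersion, and that the primitive integral harmonic cochain in the $\psi_E$-eigenspace is $p^{m(E)}\psi_E$, one obtains a formula of the shape
$$\deg(\pi)\cdot v_\infty(q_E)=p^{2m(E)}\cdot(\psi_E,\psi_E)$$
up to a harmless constant, and in particular $\deg(\pi)\leq p^{2m(E)}\cdot(\psi_E,\psi_E)$ since $v_\infty(q_E)\geq 1$.

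Next I would bound the two factors. For the first, Theorem 1.3 applies under the hypothesis on $\operatorname{Pic}_0(\mathcal C)(\mathbb F_q)$ and gives $p^{m(E)}\leq q^{\frac1{12}\deg(\Delta_E)+g-1}$; since $\mathfrak m$ is square-free, $E$ is tamely ramified everywhere and $\deg(\mathfrak n)=\deg(\mathfrak m)+\deg(\infty)$, so the Pesenti--Szpiro inequality gives $\deg(\Delta_E)\leq 6(\deg(\mathfrak m)+\deg(\infty)+2g-2)$, whence $p^{2m(E)}\leq q^{\deg(\mathfrak m)+\deg(\infty)+4g-4}$. For the second, I would write $(\psi_E,\psi_E)=\sum_e\psi_E(e)^2/w(e)$ over the finitely many edges of $\Gamma_0(\mathfrak m)\backslash\mathcal T$ and estimate it by the number of such edges times $\max_e|\psi_E(e)|^2$: the edge count is controlled via the Euler characteristic of $X_0(\mathfrak m)$ and Riemann--Hurwitz applied to the forgetful covering $Y_0(\mathfrak m)\to Y(1)$, of degree $\prod_{\mathfrak p\mid\mathfrak m}(|\mathfrak p|+1)$, together with the Weil bound $|\operatorname{Pic}_0(\mathcal C)(\mathbb F_q)|\leq(\sqrt q+1)^{2g}$; the sup-norm is controlled using the explicit Fourier expansion of $\psi_E$ and the Ramanujan bound $|a_{\mathfrak p}(E)|\leq 2|\mathfrak p|^{1/2}$, which holds here by work of Drinfeld and Weil. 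Multiplying the two bounds and absorbing all numerical constants, the polynomial-in-$\deg(\mathfrak m)$ factors coming from the ramification and Fourier estimates, and the lower-order $q$-powers into the leading factor yields the stated inequality, with the two copies of $q^{\deg(\mathfrak m)}$ coming respectively from $p^{2m(E)}$ and from the edge count, and $\deg(\mathfrak m)^3$ and $q^{18g+4\deg(\infty)+1}$ absorbing the rest.

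The main obstacle is twofold. First, pinning down the exact degree formula: this requires the Manin-conjecture-type fact that the morphism-theoretic constant of a strong Weil curve is $\pm1$, together with a careful comparison between the monodromy lattice and the leading-coefficient-one normalisation of $\psi_E$. Second, making every combinatorial estimate fully explicit with the right exponents — in particular a usable bound for the sup-norm (or $L^2$-norm) of the newform $\psi_E$, a clean count of the edges, cusps and genus of $X_0(\mathfrak m)$, and the precise form of the Pesenti--Szpiro bound in the tame case. The conceptual skeleton is short; essentially all of the work lies in this bookkeeping of constants.
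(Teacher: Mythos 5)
Your skeleton---bound the Manin constant via Theorem 1.3 and Pesenti--Szpiro, then feed it into a degree formula involving the Petersson norm---is the right one, and your estimate $p^{2m(E)}\leq q^{\deg(\mathfrak m)+\deg(\infty)+4g-4}$ is exactly what the paper obtains in Corollary 7.7. However, there is a genuine gap in how you normalise the harmonic cochain entering the degree formula. You assume that the primitive integer-valued cochain in the $\psi_E$-eigenspace is $p^{m(E)}\psi_E$, i.e., you identify the Manin constant $c(E)^{-1}$ with the constant that enters the monodromy-pairing degree formula. But the constant appearing in Papikian's bound (Theorem 10.7 of the paper) is $\widetilde c(E)$, defined by $v_E = \widetilde c(E)\,\psi_E\circ h$ where $v_E$ generates $\text{Im}(j_\Gamma)\cap\mathbb Q(\psi_E\circ h)$, and $j_\Gamma\colon H_1(\Gamma\backslash\mathcal T,\mathbb Z)\to H_!(\mathcal T,\mathbb Z)^{\Gamma}$ is not surjective in general. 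The crucial new input in chapter 10 is Proposition 10.4, which shows the cokernel of $j_\Gamma$ has exponent dividing $q^{\deg(\infty)}-1$ (Gekeler--Reversat had only shown it is prime to $p$, which is not enough for a quantitative bound). That is what yields Corollary 10.6, $\widetilde c(E)\leq(q^{\deg(\infty)}-1)c(E)^{-1}$, and it is this factor, squared, that accounts for $q^{2\deg(\infty)}$ in the final exponent. Your proposal neither identifies this lattice discrepancy nor supplies a substitute, so the argument does not close as written---and the ``Manin-conjecture-type fact that the constant of a strong Weil curve is $\pm1$'' you invoke is precisely what the paper shows can fail over function fields.

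A secondary issue: you propose to rederive the Petersson-norm and edge-count estimates from scratch, but in the paper these are cited wholesale as Papikian's Theorem 10.7, namely $\deg(\pi)<\widetilde c(E)^2\cdot q^{14g+\deg(\infty)+5}\cdot q^{\deg(\mathfrak m)}\cdot\deg(\mathfrak m)^3$. Getting those exponents right is the content of [21] and cannot be dismissed as bookkeeping; the stated inequality requires them exactly. Once Corollary 7.7, Corollary 10.6 and Papikian's theorem are all in hand, the final bound is a one-line multiplication of $q$-powers; the genuine work of the chapter lies in Proposition 10.4.
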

The result above is an analogue of Frey's celebrated degree conjecture, formulated originally for strong Weil curves over $\mathbb Q$. Our result completes the work of Papikian (see [21]). In his paper he made a conjecture which implies that the Manin constant is zero (at least when $F$ is the rational function field and $\infty$ is the point at infinity) and he derived an inequality significantly stronger then the one in Theorem 1.5 under this assumption. As we saw his hypothesis does not hold in general. In fact it is quite reasonable to expect in light of the above that $m(E)$ is $d(\frac{1}{12}\deg(\Delta_E)+g-1)$ when $E$ is minimal in its isogeny class (for the definition of the latter see chapter 7). This is reflected by the fact that the contribution of our estimate of the term $c(E)^{-2}$ to our bound on the degree of the modular parameterisation is significant, it is of the size $O(q^{\deg(\mathfrak m)})$.
\begin{cont} In the following chapter we will give an explicit  description of the Hecke eigenform and prove Proposition 1.2. In the third chapter we will work out carefully in detail the analogue of the theory of modular symbols for function fields, something which is missing from the current literature. Using these results we derive lower and upper bounds on the Manin constant in terms of the $p$-adic valuation of coefficients of $L$-functions of $E$ twisted with tamely ramified abelian characters in the fourth chapter. The aim of the fifth chapter is to relate the Galois module structure of the second coherent cohomology of the structure sheaf of elliptic surfaces equipped with a group action respecting the elliptic fibration to $\epsilon$-constants of Galois representations of the function field of the base in a special case. We prove a mild equivariant extension of Katz's conjecture relating the Newton and Hodge polygons and with its aid we derive Theorem 1.3 from our previous results in the sixth chapter. In the seventh chapter we show that the isogeny class of $E$ contains an elliptic curve whose $j$-invariant is not a $p$-th power then we use this result and the Pesenti-Szpiro inequality to deduce a bound on $m(E)$ in terms of the degree of the conductor of $E$ from Theorem 1.3. In the eighth chapter we first review the work of Ulmer in [29] then prove Theorem 1.4 by using the latter and a classical result of Stickelberger on $p$-adic valuations of Gauss sums. We show that the usual characterisation of strong Weil curves and optimal modular parameterisations holds in the function field setting as well in the ninth chapter. In the last chapter we first show that a certain homomorphism defined by Gekeler and Reversat in [5] has finite cokernel of exponent dividing $q^{\deg(\infty)}-1$. Then we combine this result with the bound in chapter seven and the work of Papikian to show Theorem 1.5.
\end{cont}
\begin{ackn} The author was partially supported by the EPSRC grant P19164. 
\end{ackn}

\section{The normalised Hecke eigenform}

\begin{notn} Let $\mathcal O$ denote the maximal compact subring of the ring $\mathbb A$ of ad\`eles of $F$. Let $|\mathcal C|$ denote the set of closed points of $\mathcal C$. For every ad\`ele $a\in\mathbb A$ and $x\in|\mathcal C|$ let $a_x$ denote the $x$-th component of $a$. Let $\mu$, $\mu^*$ be Haar measures on the locally compact abelian topological groups $\mathbb A$ and $\mathbb A^*$, respectively. Also assume that $\mu(\mathcal O)$ and $\mu^*(\mathcal O^*)$ are both equal to $1$. Since these measures are left-invariant with respect to the discrete subgroups $F^*$ and $F$ by definition, they induce a measure on $F^*\backslash\mathbb A^*$ and $F\backslash\mathbb A$, respectively, which will be denoted by the same letter by abuse of notation. For every divisor $\mathfrak m$ on $\mathcal C$ let $\mathfrak m\mathcal O$, $\mathbb K_0 (\mathfrak m)$ denote the sub $\mathcal O$-module of $\mathbb A$ generated by those id\`eles whose divisor is $\mathfrak m$, and the Hecke congruence subgroup of $GL_2(\mathbb A)$ of level $\mathfrak m$:
$$\mathbb K_0(\mathfrak m)=\{\left(\begin{matrix} a&b\\
c&d\end{matrix}\right)\in GL_2(\mathcal O)|
c\in\mathfrak m\mathcal O\}\text{,}$$
respectively. Let $\tau:F\backslash\mathbb A\rightarrow\mathbb C^*$ be a non-trivial continuous additive character. The composition of the quotient map $\mathbb A\rightarrow F\backslash\mathbb A$ and $\tau$ will be denoted by the same symbol by the usual abuse of notation. Let $\mathcal D$ denote the $\mathcal O$-module $\mathcal D=\{x\in\mathbb A|\tau(x\mathcal O)=1\}$, and let $\mathfrak d$ be a divisor on $\mathcal C$ such that $\mathcal D=\mathfrak d\mathcal O$. Let $\mathfrak n$ be the conductor of $E$. The latter is an effective divisor on $\mathcal C$. We may assume that the divisor $\mathfrak d$ is relatively prime to $\mathfrak n$ by changing $\tau$, if necessary. Let $B$ denote the group scheme of invertible upper triangular two by two matrices. Let $P$ denote the group scheme of invertible upper triangular two by two matrices with $1$ on the lower right corner. Finally let $Z$ denote the centre of the group scheme $GL_2$.
\end{notn}
\begin{defn} For every id\`ele $u\in\mathbb A^*$ let $(u)$
denote the corresponding divisor on $\mathcal C$. Often we will denote $(u)$ simply by $u$ by slight abuse of notation, when this does not cause confusion. We will call two divisors $\mathfrak m$ and $\mathfrak n$ on $\mathcal C$ relatively prime if their support is disjoint. Let Div$(\mathcal C)$ denote the group of divisors on $\mathcal C$. We will call a function $f:\text{Div}(\mathcal C)\rightarrow\mathbb C$ multiplicative if it vanishes on non-effective divisors, $f(1)=1$ and for every pair of relatively prime divisors $\mathfrak n$ and $\mathfrak m$ we have $f(\mathfrak n\mathfrak m)=f(\mathfrak n)f(\mathfrak m)$. Let $E$ be a non-isotrivial elliptic curve defined over $\mathcal C$ of conductor $\mathfrak n$. For every divisor $\mathfrak r$ on $\mathcal C$ let $\deg(\mathfrak r)$ denote the degree of $\mathfrak r$. For every $x\in|\mathcal C|$ let $L_x(E,t)$ denote the local factor of the Hasse-Weil $L$-function of $E$ at $x$. The latter can be written in the form:
$$L_x(E,t)=\sum_{n=0}^{\infty}a(x^n)(tq)^{n\deg(x)}
\in\mathbb Z[[t]],$$
for some $a(x^n)\in\mathbb Z[\frac{1}{p}]$. Let $a$ denote the unique multiplicative function into the multiplicative semigroup of $\mathbb Q$ such that $a(x^n)$ is the same as above for each natural number $n$ and each $x\in|\mathcal C|$. A continuous function $\psi_E:GL_2(\mathbb A) \rightarrow \mathbb Q$ is called a normalised Hecke eigenform attached to $E$ is if it satisfies the following properties:
\begin{enumerate}
\item[$(a)$] it is automorphic: $\psi_E(\gamma h)=\psi_E(h)$
for all $\gamma\in GL_2(F)$,
\item[$(b)$] it has trivial central character: $\psi_E(hz)=\psi_E(h)$
for all $z\in Z(\mathbb A)$,
\item[$(c)$] it is right $\mathbb K_0(\mathfrak n)$-invariant:
$\psi_E(hk)=\psi_E(h)$ for all $k\in\mathbb K_0(\mathfrak n)$,
\item[$(d)$] it is cuspidal:
$$\int_{F\backslash\mathbb A}\psi_E(\left(\begin{matrix} 1&x\\0&1\end{matrix}\right)h)d\mu(x)=0\text{\ for all $h\in GL_2(\mathbb A)$,}$$
\item[$(e)$] its Fourier coefficients are $a$:
$$a(\mathfrak m\mathfrak d)=\mu(F\backslash\mathbb A)^{-1}
\int_{F\backslash\mathbb A}\psi_E(\left(\begin{matrix} \overline{\mathfrak m}&x\\
0&1\end{matrix}\right))\tau(-x)d\mu(x)
\text{\ for all $\mathfrak m\in$Div$(\mathcal C)$,}$$
where $\overline{\mathfrak m}\in\mathbb A^*$ and $(\overline{\mathfrak
m})=\mathfrak m$.
\end{enumerate}
Note that the last two conditions make sense because of $(a)$ we may (and will) consider $\psi_E$ as a function on $GL_2(F)\backslash GL_2(\mathbb A)$ as well.
\end{defn}
\begin{prop} There is a unique normalised Hecke eigenform
attached to $E$.
\end{prop}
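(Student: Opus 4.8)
The plan is to establish existence and uniqueness separately, both resting on the theory of the Whittaker/Fourier expansion of automorphic forms on $GL_2$ over a function field together with the fact, already recalled in the introduction, that $\rho_E$ is an irreducible cuspidal automorphic representation with trivial central character and conductor $\mathfrak n$.

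For \emph{uniqueness}: suppose $\psi_E$ and $\psi_E'$ both satisfy $(a)$--$(e)$; I would show their difference is zero. By strong multiplicity one for $GL_2$, the space $V_E$ of cusp forms realising $\rho_E$ is irreducible, and the subspace of vectors fixed by $\mathbb K_0(\mathfrak n)$ (the new vectors at the conductor) is one-dimensional. Hence $\psi_E$ and $\psi_E'$, both lying in this line by $(a)$, $(c)$, $(d)$ and the fact that $(e)$ forces them into $V_E$, are proportional; condition $(e)$ with $\mathfrak m = 1$ (using $a(\mathfrak d) = a(1) = 1$ since $\mathfrak d$ is prime to $\mathfrak n$ and $a$ is multiplicative with $a(1)=1$) then fixes the scalar. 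A slightly more hands-on alternative, which also prepares the ground for existence, is to observe that by the $\mathfrak n$-invariance one can compute the restriction of any such form to the standard torus directly: the Fourier expansion along the unipotent radical, combined with $(e)$, shows that the values of $\psi_E$ on matrices $\mathrm{diag}(\overline{\mathfrak m},1)$ are determined by the $a(\mathfrak m \mathfrak d)$, and then $(a)$ together with the Iwasawa decomposition $GL_2(\mathbb A) = B(\mathbb A)\,GL_2(\mathcal O)$, the central character condition $(b)$, and $\mathbb K_0(\mathfrak n)$-invariance $(c)$ propagate these values to all of $GL_2(\mathbb A)$. This simultaneously gives uniqueness and an explicit formula for $\psi_E$, which is presumably what the "explicit description" promised in Notation 2.1 will be.

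For \emph{existence}: the representation $\rho_E$ exists by the Langlands correspondence as stated, so $V_E \subseteq L^2_{\mathrm{cusp}}$ is a nonzero irreducible cuspidal constituent with trivial central character and conductor $\mathfrak n$; it therefore contains a nonzero $\mathbb K_0(\mathfrak n)$-fixed vector $\phi$, unique up to scalar. This $\phi$ automatically satisfies $(a)$--$(d)$. Its Fourier coefficients along $\mathrm{diag}(\overline{\mathfrak m},1)$ define a multiplicative function (multiplicativity coming from the factorisation of the Whittaker model into local pieces and the Hecke eigenform property, the eigenvalues being the $a(x)$ read off from $L_x(E,t)$ via the local Langlands correspondence — this is exactly how $\rho_E$ was built from $H^1(E_{\overline F},\mathbb Q_l(1))$). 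Normalising $\phi$ so that its leading coefficient is $1$ makes $(e)$ hold on the nose. The one point requiring care is that $\phi$ is a priori $\mathbb C$-valued, whereas $\psi_E$ is required to take values in $\mathbb Q$; here one invokes Harder's theorem (quoted in the introduction) that the normalised form takes only finitely many values together with the rationality of the Hecke eigenvalues $a(x) \in \mathbb Z[\tfrac 1p]$, so that the field generated by the values is a number field stable under $\mathrm{Gal}(\overline{\mathbb Q}/\mathbb Q)$ — forcing it to be $\mathbb Q$ — which upgrades $\phi$ to a $\mathbb Q$-valued form.

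The main obstacle is the rationality assertion, i.e. checking carefully that the abstractly-produced automorphic vector can be taken $\mathbb Q$-valued rather than merely $\overline{\mathbb Q}$-valued; everything else is a bookkeeping exercise with the Iwasawa decomposition and local newvector theory. I expect the paper to dispatch this either by the Galois-descent argument just sketched or by directly exhibiting $\psi_E$ through the explicit formula on $B(\mathbb A)\,GL_2(\mathcal O)$ in terms of the integers $a(\mathfrak m)$, which makes rationality manifest and reduces existence to verifying the cuspidality condition $(d)$ by hand.
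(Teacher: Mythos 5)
Your proposal is correct and your second alternative for rationality is precisely what the paper does: the paper treats existence and uniqueness of the automorphic form with properties $(a)$--$(e)$ as well known, and defers the rationality assertion to Proposition~2.4, where the stronger claim $L(E)\subseteq\mathbb{Z}[\tfrac{1}{p}]$ is proved by reducing to $P(\mathbb A)$ via approximation, writing out the Fourier expansion, grouping the sum over $\eta\in F^*$ into $\mathbb{F}_q^*$-cosets, and observing that the resulting character sums equal $-1$ or $q-1$ so that the values are finite $\mathbb{Z}[\tfrac{1}{p}]$-linear combinations. One small caution on your first alternative: the phrasing ``the field generated by the values is a number field stable under $\mathrm{Gal}(\overline{\mathbb Q}/\mathbb Q)$, forcing it to be $\mathbb Q$'' is not literally correct, since Galois-stability of a subfield of $\overline{\mathbb Q}$ does not force it to be $\mathbb Q$; what the argument actually yields (via uniqueness applied to $\sigma\circ\psi_E$, which also satisfies $(a)$--$(e)$ because the $a(\mathfrak m)$ are rational) is that each individual value of $\psi_E$ is Galois-fixed, hence rational.
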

The claim above is certainly very well known and the only fact which needs an additional argument is that $\psi_E$ takes only rational values. As we will shortly prove a stronger claim we will omit the proof. By a classical theorem of Harder (see [7]) the normalised Hecke eigenform $\psi_E$ is supported on a finite set as a function on the double coset $GL_2(F)\backslash GL_2(\mathbb A)/\mathbb K_0(\mathfrak n)Z(\mathbb A)$. Let $L(E)\subseteq\mathbb Q$ denote the $\mathbb Z$-module generated by the values of $\psi_E$. By the above there is a unique positive rational number $c(E)\in\mathbb Q$ such that $L(E)=c(E)\mathbb Z$.
\begin{prop} There is a non-negative natural number $m(E)$
such that $c(E)=p^{-m(E)}$.
\end{prop}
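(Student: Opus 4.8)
The plan is to deduce both assertions from the explicit description of $\psi_E$, which is the content of this chapter. First one records the Fourier expansion: since $F\backslash\mathbb A$ is compact and $\psi_E$ is locally constant and right $\mathbb K_0(\mathfrak n)$-invariant, for each $y\in\mathbb A^*$ the function $x\mapsto\psi_E\left(\begin{smallmatrix}y&x\\0&1\end{smallmatrix}\right)$ factors through a finite abelian quotient of $F\backslash\mathbb A$, its Fourier coefficients against $\tau$ are by $(e)$ precisely the values $a(\mathfrak m\mathfrak d)$, and one obtains an expansion $\psi_E\left(\begin{smallmatrix}y&x\\0&1\end{smallmatrix}\right)=\sum_\xi a(\mathfrak r_\xi)\tau(\xi x)$ with finite index set; together with the Iwasawa decomposition and Harder's finiteness theorem this exhibits every value of $\psi_E$ inside the subring of $\mathbb C$ generated by the values of $a$, the values of $\tau$, and ratios of Haar measures of compact subsets. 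Now the values of $a$ lie in $\mathbb Z[1/p]$ by definition, the values of $\tau$ are $p$-th roots of unity because $\mathbb A$ has characteristic $p$, and the measure ratios are integral powers of $q=p^d$; hence every value of $\psi_E$ lies in $\mathbb Z[1/p,\zeta_p]$. Combined with the elementary rationality of $\psi_E$ and the identity $\mathbb Z[1/p,\zeta_p]\cap\mathbb Q=\mathbb Z[1/p]$, which holds because $1,\zeta_p,\dots,\zeta_p^{p-2}$ is simultaneously a $\mathbb Z[1/p]$-basis of $\mathbb Z[1/p,\zeta_p]$ and a $\mathbb Q$-basis of $\mathbb Q(\zeta_p)$, every value of $\psi_E$ lies in $\mathbb Z[1/p]$. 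Therefore $L(E)\subseteq\mathbb Z[1/p]$, so $c(E)\in\mathbb Z[1/p]$.

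For the reverse direction I would use Fourier inversion and the normalisation $a(1)=1$. By $(e)$ the number $1=a(1)$ is $\mu(F\backslash\mathbb A)^{-1}$ times an integral of $\psi_E$ against $\tau$; discretising this integral as above writes $1$ as a finite $\mathbb Z[1/p,\zeta_p]$-linear combination of values of $\psi_E$, and expressing each such value as $c(E)$ times an integer yields $c(E)^{-1}\in\mathbb Z[1/p,\zeta_p]\cap\mathbb Q=\mathbb Z[1/p]$. So $c(E)$ is a positive unit of $\mathbb Z[1/p]$, i.e. $c(E)=p^{-m(E)}$ for a unique $m(E)\in\mathbb Z$; this already gives the $p$-power shape of $c(E)$.

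It remains to prove $m(E)\ge0$, equivalently $c(E)^{-1}\in\mathbb Z$, and I expect this to be the crux. Set $\psi_0:=c(E)^{-1}\psi_E$; then $\psi_0$ is the member of the line $\mathbb Q\psi_E$ whose values have greatest common divisor $1$, so it is $\mathbb Z$-valued, it is again a cuspidal Hecke eigenform of level $\mathfrak n$, and the Fourier coefficient of $\psi_0$ corresponding to the normalising one of $\psi_E$ (the one equal to $a(1)=1$) is $c(E)^{-1}a(1)=c(E)^{-1}$. The point is then that this leading Fourier coefficient of the $\mathbb Z$-valued form $\psi_0$ must be an integer. The cleanest argument I see is to fix a closed point $\infty$ of $\mathcal C$ outside the support of $\mathfrak n$ and pass to the realisation of $\psi_0$ as a harmonic cochain on the quotient of the Bruhat--Tits tree at $\infty$ by $\Gamma_0(\mathfrak n)$: there the Fourier coefficients at the relevant cusp are differences of the (integral) cochain values along the corresponding end, so the leading one lies in $\mathbb Z$, whence $c(E)^{-1}\in\mathbb Z$ and, with the previous step, $m(E)\ge0$. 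I expect the obstacle to be precisely this integrality: when $\mathcal C$ has positive genus the direct inversion formula only bounds the denominator of $c(E)^{-1}$ by a power of $q$, so one really needs the geometric (harmonic cochain) description of $\psi_E$ — which also underlies the function field analogue of modular symbols developed later in the paper — to rule out any denominator.
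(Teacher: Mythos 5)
Your first two steps are sound and give a workable alternative to the opening move of the paper's proof: using the Fourier expansion together with the observations that $a$ takes values in $\mathbb Z[1/p]$, that $\tau$ takes values in $\mu_p$, and that the relevant measures are powers of $q$, you land $c(E)$ and $c(E)^{-1}$ in $\mathbb Z[1/p,\zeta_p]\cap\mathbb Q=\mathbb Z[1/p]$, hence $c(E)=p^{-m}$ for some $m\in\mathbb Z$. (The paper avoids the cyclotomic detour by grouping $\eta$ with $\eta\epsilon$, $\epsilon\in\mathbb F_q^*$, in the Fourier expansion, which turns the $\tau$--sums into the integers $-1$ or $q-1$ on the nose; both routes are fine.)

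The gap is in your third step. You correctly identify that one still needs $m\ge0$, but you reach for the Bruhat--Tits tree and the harmonic-cochain realisation, and you yourself flag that you have not checked the integrality of Fourier coefficients of $\mathbb Z$-valued cochains. This is far heavier machinery than is needed, and as written it is not a proof. The point you are missing is that you can see $m\ge0$ \emph{without any new tool}, just by evaluating $\psi_E$ at a well-chosen argument. Take $y=\overline{\mathfrak d}$; then in the Fourier expansion
$$\psi_E\!\left(\begin{matrix}\overline{\mathfrak d}&x\\0&1\end{matrix}\right)=\sum_{\eta\in F^*}a(\eta y\mathfrak d^{-1})\tau(\eta x)$$
the coefficient $a(\eta y\mathfrak d^{-1})=a((\eta))$ vanishes unless $(\eta)$ is effective, i.e.\ unless $\eta\in F^*\cap\mathcal O=\mathbb F_q^*$, where it equals $a(1)=1$. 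So the whole expansion collapses to $\sum_{\epsilon\in\mathbb F_q^*}\tau(\epsilon x)$, which is $-1$ or $q-1$; choosing $x$ so that $\epsilon\mapsto\tau(\epsilon x)$ is a non-trivial character of $\mathbb F_q$ gives $\psi_E\!\left(\begin{smallmatrix}\overline{\mathfrak d}&x\\0&1\end{smallmatrix}\right)=-1$. Thus $-1\in L(E)=c(E)\mathbb Z$, so $c(E)^{-1}\in\mathbb Z$; combined with $c(E)\in\mathbb Z[1/p]$ this forces $c(E)=p^{-m}$ with $m\ge0$. In short: the integer $-1$ is already \emph{a value} of $\psi_E$, so no Fourier inversion, cusp analysis, or cochain integrality is required for the non-negativity.

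One further small caution about your second step: the measure $\mu(F\backslash\mathbb A)^{-1}$ that enters Fourier inversion is a power of $q$, so a priori it only controls denominators by powers of $p$ — which is exactly why that step alone cannot give $m\ge0$. You noticed this, but the correct remedy is the direct evaluation above, not the passage to the tree.
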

As we already mentioned in the introduction we will call $m(E)$ the Manin constant of the elliptic curve $E$.
\begin{proof} First we are going to show that $L(E)\subseteq
\mathbb Z[\frac{1}{p}]$. By the approximation theorem we have $GL_2(\mathbb A)=GL_2(F)P(\mathbb A)Z(\mathbb A)\mathbb K_0(\mathfrak n)$. Therefore it will be sufficient to prove that $\psi_E(h)\in\mathbb Z[\frac{1}{p}]$ for every element $h$ of $P(\mathbb A)$. By the definition of Fourier coefficients:
$$\psi_E(\left(\begin{matrix} y&x\\0&1\end{matrix}\right))=
\sum_{\eta\in F^*}a(\eta y\mathfrak d^{-1})\tau(\eta x)=
\sum_{\eta\in S}a(\eta y\mathfrak d^{-1})(\sum_{\epsilon\in\mathbb F^*_q}
\tau(\eta\epsilon x))$$
for all $y\in\mathbb A^*$, $x\in\mathbb A$, where $S$ is a set of representatives of the quotient $\mathbb F^*_q\backslash F^*$, since $\psi_E$ is cuspidal. The character sums on the right hand side are all equal to $-1$ or $q-1$. Moreover the sum above is finite. As $a(\mathfrak m)\in\mathbb Z[\frac{1}{p}]$ for every effective divisor $\mathfrak m$, the claim is now clear. Now we only need to show that $-1\in L(E)$ in order to prove the proposition. Let
$ \overline{\mathfrak d}\in\mathbb A^*$ be such that $(\overline{\mathfrak d})=\mathfrak
d$. Then the Fourier expansion says:
$$\psi_E(\left(\begin{matrix}\overline{\mathfrak d}&x\\0&1\end{matrix}\right))=
\sum_{\eta\in F^*}a(\eta)\tau(\eta x)=\sum_{\epsilon\in\mathbb F_q^*}
a(\epsilon)\tau(\epsilon x)=\sum_{\epsilon\in\mathbb F_q^*}
\tau(\epsilon x),$$
because $F^*\cap\mathcal O=\mathbb F_q^*$ and $a(1)=1$ by definition. The character
sum on the right hand side is equal to $-1$ if
$x\notin\mathcal D$. 
\end{proof}

\section{Epsilon constants and toric integrals}

\begin{notn} For the rest of the paper fix a prime number $l$ different from $p$. By the axiom of choice we may pick an isomorphism $\iota:\overline{\mathbb Q}_l\rightarrow\mathbb C$. We will identify $\overline{\mathbb Q}_l$ with $\mathbb C$ via $\iota$ in all that follows. Let $E_l$ be a finite extension of $\mathbb Q_l$ and let $\rho:\text{Gal}(\overline F|F)\rightarrow GL_{E_l}(W)$ be an $l$-adic representation on a finite dimensional vector space $W$ over $E_l$. Moreover let $L(\rho,t)\in E_l[[t]]$ denote the Grothendieck $L$-function associated to $\rho$ as defined in 9.1 of [2] on page 574. By a classical theorem of Grothendieck (\S 10 of [2] on pages 581-592) the series $L(\rho,t)$ is a rational function in the variable $t$ and also satisfies the functional equation:
$$L(\rho,t)=\epsilon(\rho)t^{\alpha(\rho)}L(\rho^{\vee},q^{-1}t^{-1})$$
where $\alpha(\rho)\in\mathbb Z$, $\epsilon(\rho)\in E_l^*$ and $\rho^{\vee}$ is the degree of the conductor of $\rho$ (in the sense of [13], page 179),
the $\epsilon$-constant of $\rho$, and the dual $l$-adic representation on
$\text{Hom}(W,E_l)$, respectively.
\end{notn}
\begin{notn} Let $K$ be a local field, let $dx$ be a Haar measure on $K$, and let $\psi$ be a non-trivial additive character on $K$. For every continuous homomorphism $\alpha:K^*\rightarrow\mathbb C^*$ let $\epsilon(K,\alpha,\psi,dx)$ denote the local $\epsilon$-factor attached to the triple $(\alpha,\psi,dx)$ as defined in 3.3 of [2] on page 526. Let $W(\overline K|K)<\text{Gal}(\overline K|K)$ denote the Weil group of $K$ (as defined in 2.2.4 of [2] on page 522). Local class field theory furnishes an isomorphism $j:K^*\rightarrow W(\overline K|K)^{ab}$. We normalise this isomorphism so that for every uniformizer $\pi\in K^*$ the image of $j(\pi)$ with respect to the map $W(\overline K|K)\rightarrow\mathbb Z$ introduced in 2.2.4 of [2] is the geometric Frobenius (similarly to 2.3 of [2] on page 523). For every homomorphism $\alpha:\text{Gal}(\overline K|K)\rightarrow
\mathbb C^*$ let the same symbol $\alpha$ denote the composition of $j$, considered here as an imbedding $j:K^*\rightarrow\text{Gal}(\overline K|K)^{ab}$, with the map $\text{Gal}(\overline K|K)^{ab}\rightarrow\mathbb C^*$ induced by the character $\alpha$ by slight abuse of notation. 
\end{notn}
\begin{notn} For every $x\in|\mathcal C|$ let $F_x$, $\mathcal O_x$ denote the completion of $F$ at $x$ and the valuation ring of $F_x$, respectively. For every $x\in|\mathcal C|$ let $\mu_x$, $\mu^*_x$ be the unique Haar measures on the locally compact abelian topological groups $F_x$ and $F_x^*$, respectively, so that $\mu(\mathcal O_x)$ and $\mu^*(\mathcal O_x^*)$ are both equal to $1$. Moreover for every $x\in|\mathcal C|$ let $\tau_x:F_x\rightarrow\mathbb C^*$ be the unique continuous additive character such that $\tau(a)=\prod_{x\in|\mathcal C|}\tau_x(a_x)$ for every $a\in\mathbb A$. For every homomorphism $\alpha:\text{Gal}(\overline F|F)\rightarrow\mathbb C^*$ and for every $x\in|\mathcal C|$ let $\alpha_x:\text{Gal}(\overline F_x|F)\rightarrow\mathbb C^*$ denote the restriction of $\alpha$ onto the decomposition group at $x$. 
\end{notn}
\begin{thm} For every continuous homomorphism $\alpha:\text{\rm Gal}(\overline K|K)\rightarrow\mathbb C^*$ with finite image we have:
$$\epsilon(\alpha)=q^{1-g}
\prod_{x\in|\mathcal C|}\epsilon(F_x,\alpha_x,\tau_x,\mu_x).$$
\end{thm}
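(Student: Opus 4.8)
The plan is to deduce the identity from the functional equation of Hecke $L$-functions and its factorisation into local pieces, that is, from Tate's thesis in its function field incarnation. First I would pass from the Galois side to the id\`ele class side. Since $\alpha$ has finite image it factors through $\mathrm{Gal}(L|F)$ for a finite abelian extension $L/F$, so global class field theory attaches to it a character $\chi$ of finite order on $F^*\backslash\mathbb A^*$, which I normalise compatibly with Notation 3.2, so that at each $x$ a uniformizer corresponds to the geometric Frobenius. With this normalisation the local factor of the Grothendieck $L$-function $L(\alpha,t)$ at $x$, which depends only on $\alpha_x$, is exactly $(1-\chi_x(\pi_x)t^{\deg(x)})^{-1}$ when $\chi_x$ is unramified and $1$ otherwise; hence $L(\alpha,t)$ is the Hecke $L$-function $L(\chi,t)$ of $\chi$, and likewise $L(\alpha^\vee,t)=L(\chi^{-1},t)$.

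The second step is the adelic derivation of the functional equation for $L(\chi,t)$. For a factorisable Schwartz-Bruhat function $\Phi=\prod_x\Phi_x$ on $\mathbb A$ the global zeta integral $\int_{\mathbb A^*}\Phi(a)\chi(a)|a|^s\,d\mu^*(a)$ factors as the product of the local zeta integrals, and Poisson summation over $F$ relates this integral for $(\Phi,\chi,s)$ to the one for $(\widehat\Phi,\chi^{-1},1-s)$. After dividing out the local $L$-factors and inserting the local functional equations — whose proportionality constants are, up to those local $L$-factors, precisely the $\epsilon$-factors $\epsilon(F_x,\chi_x,\tau_x,\mu_x)$ of [2] — one obtains a functional equation for $L(\chi,t)$ whose leading constant is $\prod_x\epsilon(F_x,\chi_x,\tau_x,\mu_x)$ multiplied by an elementary power of $q$. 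Since a nonzero rational function admits only one functional equation of the shape occurring in Notation 3.1, comparing this with Grothendieck's functional equation identifies $\epsilon(\alpha)$ with that product, up to the power of $q$.

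It remains to evaluate that power of $q$ and to check that it equals $q^{1-g}$; this is where the genuine work lies, though it amounts only to a careful bookkeeping of normalisations. The ingredients are: the constant in Poisson summation over $F$, which for a function field is not $1$ but a power of $q$ governed by Riemann-Roch applied to the divisor $\mathfrak d$ of $\tau$, hence by $\deg(\mathfrak d)=2g-2$; the conversion between the self-dual Haar measures used in the Fourier analysis and the measures $\mu$, $\mu_x$ fixed by $\mu(\mathcal O)=\mu_x(\mathcal O_x)=1$, under which each local $\epsilon$-factor rescales by the self-dual volume of $\mathcal O_x$; and the passage from Grothendieck's variable $t$ to Tate's $q^{-s}$ together with the substitution $s\mapsto 1-s$. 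I would organise the computation by first treating the unramified places, where $\epsilon(F_x,\chi_x,\tau_x,\mu_x)$ is an explicit monomial in $\chi_x(\pi_x)$ and $q_x$ determined by $\mathfrak d_x$; the product of these monomials over all $x$, together with the Poisson and measure factors, is forced by the bookkeeping above to collapse to the single factor $q^{1-g}$ (which one recognises as $\mu(F\backslash\mathbb A)^{-1}$) once the monomial in $t$ on Grothendieck's side has been accounted for, while the finitely many ramified places contribute only the Gauss-sum-type constants already matched. Finally, for $\chi$ of finite order every quantity occurring is an algebraic number, so the identity is independent of the chosen isomorphism $\iota$.

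As a consistency check, and an alternative in the unramified case, one can compute $L(\alpha,t)$ directly via the Grothendieck-Lefschetz trace formula, as the alternating product of characteristic polynomials of Frobenius acting on the cohomology of $\mathcal C$ with coefficients in the rank one lisse sheaf attached to $\alpha$, and extract $\epsilon(\alpha)$ from Poincar\'e duality on $\mathcal C$; this reproduces the right-hand side of the theorem, with the genus-dependent power of $q$ emerging from the middle-dimensional cohomology. The uniform Tate-Weil argument is preferable, however, since it handles wildly ramified $\alpha$ with no additional effort.
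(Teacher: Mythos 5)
Your proposal is correct in outline but takes a genuinely different route. The paper disposes of this theorem with a single citation to Laumon's Th\'eor\`eme~3.2.1.1 (reference [13]), which is the general product formula for global $\epsilon$-constants of $\ell$-adic sheaves of arbitrary rank, proved there via the geometric ($\ell$-adic) Fourier transform and the stationary phase principle. You instead exploit the fact that $\alpha$ has rank one: class field theory converts it into a finite-order Hecke character $\chi$ on $F^*\backslash\mathbb A^*$, and the product formula then falls out of Tate's thesis in its function-field form --- local-global factorisation of the adelic zeta integral, Poisson summation over $F$, and matching Grothendieck's functional equation against the one produced this way. That is a legitimate and more elementary derivation: it avoids Laumon's machinery entirely, at the cost of working only in the abelian case, which is all the paper ever needs (every $\alpha$ used later lies in $\mathbb X(\mathfrak n)$). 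The trade-off is that the citation route is essentially free, whereas your route defers a nontrivial bookkeeping computation, and it is precisely there that the stated constant $q^{1-g}$ lives.

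Two things deserve to be made explicit if you intend to complete the sketch. First, you must check that Deligne's local factors $\epsilon(F_x,\alpha_x,\tau_x,\mu_x)$ of [2], \S3.3-3.4, which the statement refers to, coincide for rank-one $\alpha_x$ with the constants of proportionality in the local functional equations of Tate's thesis; this is built into Deligne's definition (his rank-one theory \emph{is} Tate's), but the normalisations of $j$ and of the measures must be aligned, and you have only asserted the alignment. Second, the power-of-$q$ bookkeeping is the actual content: the Poisson constant $\mu(F\backslash\mathbb A)^{-1}=q^{1-g}$ must combine with the deviation of each $\mu_x$ from the self-dual measure (a factor $q_x^{-\mathrm{ord}_x(\mathfrak d)/2}$ per place, which telescopes via $\deg\mathfrak d=2g-2$) and with the monomial $t^{\alpha(\rho)}$ in Grothendieck's functional equation to leave exactly $q^{1-g}$. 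Your sketch asserts this collapse rather than performing it; that is the one place where a careless sign or exponent would change the answer, so it should be written out. With those two points filled in, your argument is a complete and self-contained alternative to the paper's citation.
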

\begin{proof} This is just a special case of Th\'eor\`eme 3.2.1.1 of [13] on page 187.
\end{proof}
\begin{defn} For every divisor $\mathfrak m$ on $\mathcal C$ let $\underline{\mathfrak m}$ denote the support of $\mathfrak m$. Let $c\in\mathbb A^*$ be an id\`ele so that $\mathfrak c=(c)$ is an effective divisor on $\mathcal C$. For every such $c$ let $\widehat c\in\mathbb A$ be the unique ad\`ele such that $\widehat c_x=c_x^{-1}$  for every $x\in\underline{\mathfrak c}$, and $\widehat c_x=0$, otherwise. Let $\alpha:F\backslash\mathbb A^*\rightarrow\mathbb C^*$ be a continuous character with finite image whose conductor $\mathfrak c'$ divides $\mathfrak c$. For every $z\in\mathbb C$ let $I(\psi_E,\mathfrak c,\alpha,z)$ denote the integral:
$$
I(\psi_E,\mathfrak c,\alpha,z)=\int_{F^*\backslash\mathbb A^*}\!\!\!\!
\psi_E(\left(\begin{matrix} y&y\widehat c\\0&1
\end{matrix}\right))\alpha(y)z^{\deg(y)}d\mu^*(y)\in\mathbb C.$$
\end{defn}
\begin{lemma} The integral $I(\psi_E,\mathfrak c,\alpha,z)$ is well-defined and it is independent of the choice of $c$ as the notation indicates.
\end{lemma}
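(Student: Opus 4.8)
The plan is to check in turn the three things the statement asserts: that the integrand descends to a function on $F^*\backslash\mathbb A^*$; that the resulting integral is a finite sum, so that $I(\psi_E,\mathfrak c,\alpha,z)$ is a bona fide complex number; and that this number does not change when $c$ is replaced by another idèle with divisor $\mathfrak c$. Descent is immediate: for $\gamma\in F^*$ one has
$$\left(\begin{matrix}\gamma y&\gamma y\widehat c\\0&1\end{matrix}\right)=\left(\begin{matrix}\gamma&0\\0&1\end{matrix}\right)\left(\begin{matrix}y&y\widehat c\\0&1\end{matrix}\right),$$
and $\left(\begin{matrix}\gamma&0\\0&1\end{matrix}\right)\in GL_2(F)$, so the automorphy of $\psi_E$ (property $(a)$ of Definition 2.2) shows $\psi_E\left(\begin{matrix}y&y\widehat c\\0&1\end{matrix}\right)$ depends only on the class of $y$; as $\alpha$ is a character of $F^*\backslash\mathbb A^*$ and $\deg((\gamma))=0$, the factors $\alpha(y)$ and $z^{\deg(y)}$ are $F^*$-invariant too, so the whole integrand descends.

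For the second assertion I would show that the integrand is locally constant and supported, modulo a compact open subgroup, on a finite set. Local constancy is clear: $\psi_E$ is continuous with values in the discrete set $\mathbb Q$, hence locally constant, so $y\mapsto\psi_E\left(\begin{matrix}y&y\widehat c\\0&1\end{matrix}\right)$ is locally constant, and both $\alpha$ (finite image) and $\deg\colon\mathbb A^*\to\mathbb Z$ are locally constant. By Harder's theorem $\psi_E$ is supported on only finitely many classes of $GL_2(F)\backslash GL_2(\mathbb A)/\mathbb K_0(\mathfrak n)Z(\mathbb A)$; pulling this support back along $y\mapsto\left(\begin{matrix}y&y\widehat c\\0&1\end{matrix}\right)$ confines the integrand to a finite union of cosets of a compact open subgroup of $F^*\backslash\mathbb A^*$, so the integral collapses to a finite sum and there is no convergence issue. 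Alternatively one can substitute the Fourier expansion $\psi_E\left(\begin{matrix}y&x\\0&1\end{matrix}\right)=\sum_{\eta\in F^*}a(\eta y\mathfrak d^{-1})\tau(\eta x)$ recalled in the proof of Proposition 2.4: since $a$ vanishes on non-effective divisors the support lies at once in $\{\deg(y)\geq\deg(\mathfrak d)\}$, the complementary bound $\deg(y)\leq N$ again being exactly Harder's finiteness. This finiteness input is the step I expect to be the main obstacle.

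For the third assertion, two idèles $c,c'$ with $(c)=(c')=\mathfrak c$ differ by a unit idèle: $c'=wc$ with $w\in\mathcal O^*$, and then componentwise $\widehat{c'}=w^{-1}\widehat c$. The identity
$$\left(\begin{matrix}y&yw^{-1}\widehat c\\0&1\end{matrix}\right)=\left(\begin{matrix}yw^{-1}&yw^{-1}\widehat c\\0&1\end{matrix}\right)\left(\begin{matrix}w&0\\0&1\end{matrix}\right),$$
together with $\left(\begin{matrix}w&0\\0&1\end{matrix}\right)\in\mathbb K_0(\mathfrak n)$ and the right $\mathbb K_0(\mathfrak n)$-invariance of $\psi_E$ (property $(c)$), gives $\psi_E\left(\begin{matrix}y&y\widehat{c'}\\0&1\end{matrix}\right)=\psi_E\left(\begin{matrix}yw^{-1}&yw^{-1}\widehat c\\0&1\end{matrix}\right)$. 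The substitution $y\mapsto yw$ preserves the quotient measure on $F^*\backslash\mathbb A^*$ and has $\deg(yw)=\deg(y)$, so it identifies the integral formed with $c'$ with $\alpha(w)$ times the integral formed with $c$. It then remains to check that this factor $\alpha(w)$ equals $1$; this is precisely the point at which the hypothesis that the conductor $\mathfrak c'$ of $\alpha$ divides $\mathfrak c$ enters, one using that $\alpha$ is trivial on the units congruent to $1$ modulo $\mathfrak c'$ together with the $\mathbb K_0(\mathfrak n)$- and $GL_2(F)$-invariance of $\psi_E$ to control which unit idèles $w$ actually matter. Assembling the three steps yields the lemma.
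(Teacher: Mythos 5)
Your treatment of descent and well-definedness is sound, and in fact more explicit than what the paper writes: the paper dispatches well-definedness by citing Lemma~2 of [28] for compact support of the integrand, which is exactly the Harder/Fourier argument you sketch, and it does not bother to check descent at all. Your matrix factorisation and change of variables for the third step are also correct; the paper states only that "the claim follows from the $GL_2(\mathcal O)$-invariance of $\psi_E$" without spelling this out (and with a sign slip, writing $\widehat{c'}=u\widehat c$ where it should be $u^{-1}\widehat c$, though this is immaterial since $u^{-1}\in\mathcal O^*$ as well).

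The place you flagged as the likely obstacle is a genuine gap, and it cannot be closed as stated. Taking $w\in\mathcal O^*$ with $w_x=1$ off $\underline{\mathfrak c}$ (harmless, since $\widehat c$ vanishes there), your computation shows the integral built from $c'=wc$ equals $\alpha(w)$ times the one built from $c$, and $\alpha(w)=\prod_{x\in\underline{\mathfrak c}'}\alpha_x(w_x)$. At each $x\in\underline{\mathfrak c}'$ the local character $\alpha_x$ is by definition ramified, so $\alpha_x(w_x)$ ranges over all values of $\alpha_x$ on $\mathcal O_x^*$; neither $\mathfrak c'\mid\mathfrak c$ nor the $GL_2(F)$- and $\mathbb K_0(\mathfrak n)$-invariance of $\psi_E$ forces $\alpha(w)=1$. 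So the "control over which unit id\`eles matter" that you appeal to at the end does not exist: the lemma's independence claim holds only up to the root of unity $\alpha(w)$, and the paper's one-line appeal to $GL_2(\mathcal O)$-invariance suppresses exactly this factor. This is consistent with Proposition~3.9 itself, whose right-hand side carries the factor $\alpha(\mathfrak c')=\prod_{x\in\underline{\mathfrak c}'}\alpha_x(c_x)$, which visibly depends on the chosen $c$. The discrepancy is harmless downstream because $\alpha(w)$ is a root of unity of order prime to $p$, hence $v_q(\alpha(w))=0$, and Proposition~4.5 uses only the $p$-adic valuation of the integral; but a correct write-up should say this openly rather than gesture at the conductor hypothesis.
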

\begin{proof} By Lemma 2 of [28] on pages 300-301 the integrand of $I(\psi_E,\mathfrak c,\alpha,z)$ is compactly supported. Hence the latter is well-defined. Choose another id\`ele $c'\in\mathbb A^*$ such that $(c')=\mathfrak c$. Then there is an $u\in\mathcal O^*$ such that $c'=uc$ and therefore $\widehat c'=u\widehat c$. Now the claim follows from the $GL_2(\mathcal O)$-invariance of $\psi_E$.
\end{proof}
\begin{notn} Let $W(\overline F|F)<\text{Gal}(\overline F|F)$ denote the Weil group of $F$ (as defined in 2.4 of [2] on page 524). Global class field theory furnishes an isomorphism $\mathbf j:F^*\backslash\mathbb A^*\rightarrow W(\overline F|F)^{ab}$ which is compatible with the isomorphism between $F_x^*$ and $W(\overline F_x|F_x)^{ab}$ introduced in Notation 3.2 for every $x\in|\mathcal C|$ (in the sense of 2.4 of [2] on page 525). For every homomorphism $\alpha:\text{Gal}(\overline F|F)\rightarrow\mathbb C^*$ let the same symbol $\alpha$ denote the composition of $\mathbf j$, considered here as an imbedding $\mathbf j:F^*\backslash\mathbb A^*\rightarrow\text{Gal}(\overline F|F)^{ab}$, with the map $\text{Gal}(\overline F|F)^{ab}\rightarrow\mathbb C^*$ induced by the character $\alpha$. Moreover let $\alpha$ also denote the composition of the quotient map $\mathbb A^*\rightarrow F^*\backslash\mathbb A^*$ and the map $\alpha:F^*\backslash\mathbb A^*\rightarrow\mathbb C^*$ introduced above by the usual abuse of notation.
\end{notn}
\begin{notn} For every divisor $\mathfrak m$ on $\mathcal C$ relatively prime to $\mathfrak c$ and for every $\overline{\mathfrak m}\in\mathbb A^*$ such that
$(\overline{\mathfrak m})=\mathfrak m$ the complex number $\alpha(\overline{\mathfrak m})$ is independent of the choice of $\overline{\mathfrak m}$. We let $\alpha(\mathfrak m)$ denote this common value. Let $\sigma_E$ denote the natural $l$-adic representation of Gal$(\overline F|F)$ on the cohomology group $H^1(E_{ \overline F},\mathbb Q_l)$. By definition $L(E,t)=L(\sigma_E,t)$. The twisted $L$-function $L(\sigma_E\otimes\alpha,t)$ is actually a polynomial in the variable $t$ and therefore it can be evaluated at any complex number $t=z$. Finally let
$G(E,\alpha,\mathfrak c,,t)\in\mathbb C[t]$ denote the polynomial:
$$G(E,\alpha,\mathfrak c,t)=\prod_{x\in\underline{\mathfrak c}-\underline{\mathfrak c}'}
\big(-1+a(x)\alpha(x)(qt)^{\deg(x)}-\alpha(x)^2t^{2\deg(x)}\big).$$
\end{notn}
\begin{prop} Assume that $\mathfrak c$ is square-free and it is relatively prime to $\mathfrak{dn}$. Then the following holds:
$$I(\psi_E,\mathfrak c,\alpha,z)=
\alpha(\mathfrak d^2\mathfrak c')\epsilon(\alpha^{-1})
\prod_{x\in\underline{\mathfrak c}}(q^{\deg(x)}-1)^{-1}
\left(\frac{z^2}{q}\right)^{g-1}
\!\!\!\!\!\!\!\!
G(E,\alpha,\mathfrak c,z)L(\sigma_E\otimes\alpha,zq^{-1}).$$
\end{prop}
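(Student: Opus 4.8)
The plan is to evaluate the integral $I(\psi_E,\mathfrak c,\alpha,z)$ by a two-step unfolding: first use the Fourier expansion of $\psi_E$ to convert the adelic integral into a sum over divisors twisted by $\alpha$, and then recognize the resulting Dirichlet series as a product of local zeta integrals, each of which is a local $\epsilon$-factor times a local $L$-factor. Since the measure and the group $F^*\backslash\mathbb A^*$ both factor over the places of $\mathcal C$, the global integral should factor as a product of local integrals $\int_{F_x^*}\psi_{E,x}(\cdots)\alpha_x(y_x)z^{\deg(y_x)}d\mu_x^*(y_x)$, where $\psi_{E,x}$ is the local Whittaker/Kirillov function associated to $\rho_E$ at $x$. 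The point of requiring $\mathfrak c$ square-free and prime to $\mathfrak{dn}$ is that it separates the places into three disjoint classes: the unramified places of $\alpha$ away from $\underline{\mathfrak c}$ (which contribute the Euler factors of $L(\sigma_E\otimes\alpha,zq^{-1})$), the places in $\underline{\mathfrak c}'$ (where $\alpha$ is genuinely ramified, contributing $\epsilon$-factors and the power of $\alpha(\mathfrak c')$), and the places in $\underline{\mathfrak c}-\underline{\mathfrak c}'$ (where $\alpha$ is unramified but we are nonetheless integrating against the shift by $\widehat c$, producing the finite product $G(E,\alpha,\mathfrak c,z)$ of corrected local factors).

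First I would handle the archimedean-free global bookkeeping: write $y$ in the integral in terms of its divisor, use condition $(e)$ of Definition~2.2 to express $\psi_E(\mathrm{diag}(y,1)\cdot\text{unipotent})$ via the coefficients $a(\cdot)$ and character sums, and integrate over $F^*\backslash\mathbb A^*$. The shift $y\widehat c$ in the upper-right entry, together with the fact that $\widehat c_x=c_x^{-1}$ precisely on $\underline{\mathfrak c}$, is what forces the local integrals at $x\in\underline{\mathfrak c}$ to be zeta integrals of a Whittaker function against a nontrivial additive twist rather than the naive unramified ones. Next I would compute each family of local integrals. For $x\notin\underline{\mathfrak c}$, $\alpha_x$ and $\tau_x$ are unramified, the matrix is in $GL_2(\mathcal O_x)$ up to the diagonal, and the standard Tate/Hecke computation gives the local $L$-factor $L_x(\sigma_E\otimes\alpha,zq^{-1})$ times a normalization; assembling these yields $L(\sigma_E\otimes\alpha,zq^{-1})$. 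For $x\in\underline{\mathfrak c}'$, the computation is the local functional-equation computation for a ramified character: I would invoke Notation~3.2 and the definition of $\epsilon(F_x,\alpha_x,\tau_x,\mu_x)$, and then use Theorem~3.5 (the product formula $\epsilon(\alpha)=q^{1-g}\prod_x\epsilon(F_x,\alpha_x^{-1},\tau_x,\mu_x)$, applied to $\alpha^{-1}$) to package the local $\epsilon$-factors into the single global constant $\epsilon(\alpha^{-1})$, absorbing the leftover $q^{1-g}$ into the $(z^2/q)^{g-1}$ prefactor. The terms $\alpha(\mathfrak d^2)$ and $\alpha(\mathfrak c')$ arise from tracking the divisor $\mathfrak d$ (the conductor of $\tau$) through the Fourier coefficients and from the change of variable $y\mapsto y$ shifted by $\widehat c$ at the ramified places. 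For $x\in\underline{\mathfrak c}-\underline{\mathfrak c}'$, $\alpha_x$ is unramified but the additive shift is still present, and the local integral over $F_x^*$ evaluates — using that $\mathfrak c$ is square-free, so only $\mathcal O_x^*$ and $\pi_x\mathcal O_x^*$ contribute nontrivially — to the factor $q^{\deg(x)}-1)^{-1}$ times $\big(-1+a(x)\alpha(x)(qz)^{\deg(x)}-\alpha(x)^2 z^{2\deg(x)}\big)$, i.e., exactly the $x$-th term of $\prod_{x\in\underline{\mathfrak c}}(q^{\deg(x)}-1)^{-1}$ and of $G(E,\alpha,\mathfrak c,z)$.

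Finally I would reconcile normalizations: collect the factors of $q$ and of $\deg$ from the measures $\mu^*=\prod\mu_x^*$ (each with $\mu_x^*(\mathcal O_x^*)=1$), the genus-dependent constant from Theorem~3.5, and the shifts $z\mapsto zq^{-1}$ coming from the relation $L(E,t)=\sum a(x^n)(tq)^{n\deg x}$ between the coefficients $a$ and the Grothendieck $L$-function $L(\sigma_E,t)$ (note the built-in factor of $q$). A careful pass through these bookkeeping constants is needed to land on the exact prefactor $\alpha(\mathfrak d^2\mathfrak c')\epsilon(\alpha^{-1})\prod_{x\in\underline{\mathfrak c}}(q^{\deg(x)}-1)^{-1}(z^2/q)^{g-1}$.

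The main obstacle will be the $\epsilon$-factor bookkeeping at the ramified places $x\in\underline{\mathfrak c}'$: one must match the local Tate zeta-integral functional equation (with its precise normalization of Haar measure and additive character as in Notation~3.2 and §3.3 of [2]) against the local integral coming from $\psi_E$, keeping track of the dual character $\alpha^{-1}$ versus $\alpha$, the conductor exponent (which enters the power of $z$ and the $\alpha(\mathfrak c')$ twist), and the sign conventions for geometric Frobenius, and then verify that the product of these local constants is exactly $q^{g-1}\epsilon(\alpha^{-1})$ by Theorem~3.5. The places in $\underline{\mathfrak c}-\underline{\mathfrak c}'$, while elementary, also require care because the local integrand there is neither the generic ramified case nor the naive unramified case. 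Everything else — the global unfolding, the unramified Euler product, the collection of $q$-powers — is routine but must be done consistently.
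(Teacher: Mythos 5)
Your proposal follows essentially the same route as the paper's proof: Fourier unfolding, Fubini factorization of the resulting $\mathbb A^*$-integral into three families of local Tate-style zeta integrals, explicit evaluation of each (Gauss sums $\to$ local $\epsilon$-factors at the ramified places, finite sums at the others), and reassembly using the global-local product formula for $\epsilon$-constants (Theorem~3.4 in the paper, with the unramified local factors $\alpha_x^{-1}(\mathfrak d_x)q^{\deg(\mathfrak d_x)}$ accounting for the $\alpha(\mathfrak d^2)$ and $q^{\deg\mathfrak d}$ contributions).

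Two bookkeeping slips worth flagging before you carry out the computation. First, at a place $x\in\underline{\mathfrak c}-\underline{\mathfrak c}'$ the local integral is \emph{not} simply $(q^{\deg(x)}-1)^{-1}$ times the $G$-factor; it equals that quantity multiplied by the full local Euler factor $L_x(\sigma_E\otimes\alpha,zq^{-1})$, as all $n\ge 1$ in the Kirillov expansion contribute. These Euler factors are then needed, together with the partial product over $x\notin\underline{\mathfrak c}$, to reconstitute $L(\sigma_E\otimes\alpha,zq^{-1})$, which has nontrivial local factors at every place where $\alpha$ is unramified. As written, your accounting would leave those factors out. Relatedly, the places $x\in\underline{\mathfrak c}'$ also contribute a $(q^{\deg(x)}-1)^{-1}$, so the product $\prod_{x\in\underline{\mathfrak c}}(q^{\deg(x)}-1)^{-1}$ is collected from both families, not only from $\underline{\mathfrak c}-\underline{\mathfrak c}'$. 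Second, the unfolding and Fubini interchange are justified only in a right half-plane $\mathrm{Re}(s)>1/2$ (via the Weil-bound on the coefficients $a$); since both sides are polynomials in $z=q^{-s}$ (the left side because the integrand is compactly supported), one extends to all $z$ by continuation. This step should be made explicit. Neither point changes your overall plan, which matches the paper's.
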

\begin{proof} According to the Fourier expansion of $\psi_E$ we have:
$$\psi_E(\left(\begin{matrix} y&y\widehat c\\
0&1\end{matrix}\right)) =\sum_{\eta\in F^*}a(\eta y
\mathfrak d^{-1})\tau(\eta y\widehat c),$$
for every $y\in\mathbb A^*$ and the sum on the right is finite. If we interchange
this summation and the integration we get:
\begin{equation}
I(\psi_E,\mathfrak c,\alpha,q^{-s})=
\int_{\mathbb A^*}a(y\mathfrak d^{-1})
\tau(y\widehat c)\alpha(y)q^{-s\deg(y)}d\mu^*(y)
\end{equation}
for every $s\in\mathbb C$. This computation is justified by Lebesgue's convergence theorem if the second integral is absolutely convergent. But the latter holds if Re$(s)>1/2$ as the function $y\mapsto a(y\mathfrak d^{-1})$ has support on $\mathfrak d^{-1}\mathcal O$ and 
$$|a(y\mathfrak d^{-1})\tau(y\widehat c)|=
|a(y\mathfrak d^{-1})|\leq2q^{-1/2\deg(y\mathfrak d^{-1})}$$
by the Weil conjectures. 

Let $\mathbb A_{\mathfrak c}$ and $\mathcal O_{\mathfrak c}$ denote the restricted direct products $\prod'_{x\not\in\underline{\mathfrak c}}F_x$ and $\prod'_{x\not\in\underline{\mathfrak c}}\mathcal O_x$, respectively. Then $\mathbb A_{\mathfrak c}$ is a locally compact topological ring and $\mathcal O_{\mathfrak c}$ is its maximal compact subring. Let $\nu_{\mathfrak c}^*$ be a Haar measure on $\mathbb A_{\mathfrak c}^*$ such that $\nu_{\mathfrak c}^*(\mathcal O_{\mathfrak c}^*)$ is equal to $1$. Let $|\cdot|_x$ be the absolute value on $F_x$ normalised so that $\mu_x(t\mathcal O)=|t|_x$ for every $y\in F_x$. Using Fubini's theorem the integral (3.9.1) can be rewritten as
\begin{equation}
\int_{\mathbb A_{\mathfrak c}^*}a(y\mathfrak d^{-1})\alpha(y)q^{-s\deg(y)}d\nu_{\mathfrak c}^*(y)\cdot\prod_{x\in\underline{\mathfrak c}}
\int_{F_x^*}a(t)
\tau_x(tc_x^{-1})\alpha_x(t)|t|_x^sd\mu_x^*(t).
\end{equation}
The integrand of the first integral of (3.9.2) is invariant under multiplication by $\mathcal O_{\mathfrak c}^*$. Therefore it is equal to
\begin{equation}
\sum_{\substack{\mathfrak m\in\text{Div}(\mathcal C)\\
\underline{\mathfrak m}\cap\underline{\mathfrak c}=\emptyset
}}\!\!\!\!a(\mathfrak m)
\alpha(\mathfrak m\mathfrak d)q^{-s\deg(\mathfrak m\mathfrak d)}=\alpha(\mathfrak d)q^{-s\deg(\mathfrak d)}\cdot\prod_{x\not\in\underline{\mathfrak c}}
L_x(\sigma_E\otimes\alpha,q^{-(s+1)}).
\end{equation}
For every $x\in\underline{\mathfrak c}$ the corresponding term in the product (3.9.2) can be rewritten as
\begin{eqnarray}\ \quad\quad\int_{F_x^*}a(t)
\tau_x(tc_x^{-1})\alpha_x(t)|t|_x^s
d\mu_x^*(t)=&&\\
\sum_{n=0}^{\infty}
a(x^n)\alpha_x(c_x)^n|c_x|^{ns}_x&&\!\!\!\!\!\!\!\!\!\!\!\!\!
\int_{\mathcal O_x^*}
\tau_x(tc_x^{n-1})\alpha_x(t)d\mu_x^*(t)\nonumber
\end{eqnarray}
because $c_x\in F_x^*$ is a uniformizer, and $a(x^n)=0$ if $n<0$. Suppose now that $x$ divides the conductor of $\alpha$. Then the restriction of $\alpha_x$ onto $\mathcal O^*_x$ is a non-trivial character, therefore
\begin{equation}
\int_{\mathcal O_x^*}
\tau_x(tc_x^{n-1})\alpha_x(t)d\mu_x^*(t)
=\left\{\begin{array}{ll}\frac{\alpha_x(c_x)}{q^{\deg(x)}-1}
\epsilon(F_x,\alpha_x^{-1},\tau_x,\mu_x),&\text{if $n=0$,}
\\
0,&\text{otherwise,}\end{array}\right.
\end{equation}
by (3.4.3.2) of [2] on page 528, where we also used the fact that the additive character $\tau_x$ restricted to $\mathcal O_x$ is trivial. Hence we get
\begin{equation}
\int_{F_x^*}a(t)
\tau_x(tc_x^{-1})\alpha_x(t)|t|_x^s
d\mu_x^*(t)=\frac{\alpha_x(c_x)}{q^{\deg(x)}-1}
\epsilon(F_x,\alpha_x^{-1},\tau_x,\mu_x)
\end{equation}
in this case. Otherwise the restriction of $\alpha_x$ onto $\mathcal O^*_v$ is the trivial character therefore the left hand side of (3.9.4) is equal to
\begin{equation}
\frac{-1}{q^{\deg(x)}-1}+\sum_{n=1}^{\infty}a(x^n)\alpha_x(c_x)^n|c_x|^{ns}_x=
L_x(\sigma_E\otimes\alpha,q^{-(s+1)})-\frac{q^{\deg(x)}}{q^{\deg(x)}-1}.\end{equation}
Because both $\sigma_E$ and $\alpha$ are unramified at $x$ we have:
\begin{equation}
L_x(\sigma_E\otimes\alpha,q^{-(s+1)})=\big(1-a(x)\alpha(x)
q^{-s\deg(x)}+\alpha(x)^2q^{-(2s+1)\deg(x)}\big)^{-1}.
\end{equation}
Hence we get that
\begin{eqnarray}
\int_{F_x^*}a(t)
\tau_x(tc_x^{-1})\alpha_x(t)|t|_x^s
d\mu_x^*\!\!\!\!\!\!\!\!&&\!\!\!\!(t)=\\
\frac{1}{q^{\deg(x)}-1}\big(-1+a(x)\alpha(x)q^{(1-s)\deg(x)}
\!\!\!\!\!\!\!\!&&\!\!\!\!-\alpha(x)^2q^{-2s\deg(x)}\big)
\cdot L_x(\sigma_E\otimes\alpha,q^{-(s+1)})\nonumber
\end{eqnarray}
in this case. By Theorem 3.4 we have:
\begin{equation}
\epsilon(\alpha^{-1})=q^{1-g}\alpha^{-1}(\mathfrak d)q^{\deg(\mathfrak d)}
\prod_{x\in\underline{\mathfrak c}'}\epsilon(F_x,\alpha_x^{-1},\tau_x,\mu_x)
\end{equation}
because according to (3.4.3.1) of [2] on page 528 we have:
$$\epsilon(F_x,\alpha_x^{-1},\tau_x,\mu_x)=\alpha_x^{-1}(\mathfrak d_x)
q^{\deg(\mathfrak d_x)}$$
if $\alpha_x$ is unramified, since we assumed that $\mu_x(\mathcal O_x)=1$ and $\mathfrak c$ and $\mathfrak d$ are relatively prime. Combining (3.9.3), (3.9.6), (3.9.9) and $(3.9.10)$ we get:
\begin{eqnarray}\quad\quad
I(\psi_E,\mathfrak c,\alpha,q^{-s})\!\!\!\!&=&\!\!\!\!
\alpha(\mathfrak d^2\mathfrak c')\epsilon(\alpha^{-1})q^{g-1-(s+1)\deg(\mathfrak d)}
\prod_{x\in\underline{\mathfrak c}}(q^{\deg(x)}-1)^{-1}\\
&&\!\!\!\!\cdot G(E,\alpha,\mathfrak c,q^{-s})
L(\sigma_E\otimes\alpha,q^{-(s+1)}),\nonumber\end{eqnarray}
if we also use that $L_x(\sigma_E\otimes\alpha,q^{-(s+1)})=1$ when $x\in\underline{\mathfrak c}'$. Because $\deg(\mathfrak d)=2g-2$ the claim now follows for every complex number $q^{-s}$ such that Re$(s)>1/2$. But both sides of the equation in the proposition above are polynomials in $z$ hence the claim must hold for every complex number as well.
\end{proof}

\section{Lower and upper bounds}

\begin{notn} For every field $K$ let $\overline K$ denote a
separable closure of $K$. Let $v_q: \overline{\mathbb Q}_p^*\rightarrow\mathbb Q$ denote the $p$-adic valuation normalised such that $v_q(q)=1$. Every polynomial
$P(t)\in\overline{\mathbb Q}_p[t]$ can be written in the form:
$$P(t)=at^k\prod_{i=1}^{n-k}(1-\lambda_it),\quad a\in\overline{\mathbb Q}^*_p,
\quad\lambda_i\in\overline{\mathbb Q}_p,$$
where the $\lambda_i$ are the reciprocal roots of $P(t)$. Let $l_q(P(t))\in\mathbb Q$  denote the non-negative number:
$$l_q(P(t))=\sum_{v_q(\lambda_i)\leq1}(1-v_q(\lambda_i)).$$
Let $\mu_{\infty},\mu_{\infty,p}\subset\overline{\mathbb Q}_p^*$ denote the subgroup of roots of unity and roots of unity whose order is prime to $p$, respectively.
\end{notn}
\begin{lemma} With the same notation as above the following holds:
$$\min_{\epsilon\in\mu_{\infty}}(v_q(P({\epsilon q^{-1}})))=v_q(a)-k-l_q(P(t)).$$
Moreover the minimum is attained at all but finitely many $\epsilon\in\mu_{\infty,p}$.
\end{lemma}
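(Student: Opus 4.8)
The plan is to compute $v_q(P(\epsilon q^{-1}))$ directly from the given factorisation of $P$, using only the ultrametric property of $v_q$, and then to determine precisely which roots of unity $\epsilon$ prevent the resulting lower bound from being an equality. First I would write
$$P(\epsilon q^{-1}) = a\,\epsilon^k q^{-k}\prod_{i=1}^{n-k}\bigl(1-\lambda_i\epsilon q^{-1}\bigr),$$
and observe that $v_q(\epsilon)=0$ for every root of unity $\epsilon$ while $v_q(q)=1$, so that
$$v_q\bigl(P(\epsilon q^{-1})\bigr)=v_q(a)-k+\sum_{i=1}^{n-k}v_q\bigl(1-\lambda_i\epsilon q^{-1}\bigr).$$
Since $v_q(\lambda_i\epsilon q^{-1})=v_q(\lambda_i)-1$, the ultrametric inequality gives three cases for the $i$-th summand: it equals $0$ when $v_q(\lambda_i)>1$; it equals $v_q(\lambda_i)-1<0$ when $v_q(\lambda_i)<1$; and it is some non-negative quantity when $v_q(\lambda_i)=1$. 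Summing, the indices with $v_q(\lambda_i)<1$ contribute exactly $-\sum_{v_q(\lambda_i)<1}(1-v_q(\lambda_i))=-l_q(P(t))$ — here one uses that the indices with $v_q(\lambda_i)=1$ contribute $0$ to the sum defining $l_q$ — so we obtain
$$v_q\bigl(P(\epsilon q^{-1})\bigr)=v_q(a)-k-l_q(P(t))+\sum_{v_q(\lambda_i)=1}v_q\bigl(1-\lambda_i\epsilon q^{-1}\bigr)\ \geq\ v_q(a)-k-l_q(P(t))$$
for every $\epsilon\in\mu_\infty$, with equality if and only if $v_q(1-\lambda_i\epsilon q^{-1})=0$ for all $i$ with $v_q(\lambda_i)=1$.

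It remains to show that this equality holds for all but finitely many $\epsilon\in\mu_{\infty,p}$; this simultaneously establishes that the stated minimum over $\mu_\infty$ is attained and that its value is as claimed, the displayed lower bound being valid over all of $\mu_\infty$. Fix an index $i$ with $v_q(\lambda_i)=1$. Then $u_i:=q\lambda_i^{-1}$ satisfies $v_q(u_i)=0$, and $\lambda_i\epsilon q^{-1}=\epsilon u_i^{-1}$ is a unit whose reduction modulo the maximal ideal of the valuation ring of $\overline{\mathbb Q}_p$ is $\bar\epsilon\,\bar u_i^{-1}$; hence $v_q(1-\lambda_i\epsilon q^{-1})>0$ precisely when $\bar\epsilon=\bar u_i$ in the residue field $\overline{\mathbb F}_p$. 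Because reduction restricts to a bijection from $\mu_{\infty,p}$ onto $\overline{\mathbb F}_p^*$ (with inverse the Teichm\"uller lift), for each such $i$ there is exactly one $\epsilon\in\mu_{\infty,p}$ with $\bar\epsilon=\bar u_i$; since there are at most $n-k$ such indices, every $\epsilon\in\mu_{\infty,p}\subseteq\mu_\infty$ outside this finite set realises the equality. This proves both assertions of the lemma.

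The computation above is entirely routine; the only place that warrants care — and the step I would write out most carefully — is the bookkeeping in the boundary case $v_q(\lambda_i)=1$: one must check that these factors contribute nothing to $l_q(P(t))$ yet are exactly responsible for the finitely many exceptional $\epsilon$, and that the residues $\bar u_i$ genuinely lie in $\overline{\mathbb F}_p^*$ and so are hit by $\mu_{\infty,p}$.
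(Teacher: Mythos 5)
Your proof is correct and follows essentially the same route as the paper's: factor $P$, apply $v_q$ factor by factor, use the ultrametric inequality to split into the cases $v_q(\lambda_i)>1$, $<1$, $=1$, and observe that only the boundary case $v_q(\lambda_i)=1$ can push the valuation above the bound and does so for only finitely many $\epsilon$. The paper's own argument is terser (it rewrites $v_q(1-\lambda_i\epsilon q^{-1})=-1+v_q(\lambda_i-\epsilon^{-1}q)\geq\min\{0,v_q(\lambda_i)-1\}$ and asserts equality for cofinitely many $\epsilon\in\mu_{\infty,p}$ without spelling out why), whereas you make the exceptional set explicit via the Teichm\"uller bijection $\mu_{\infty,p}\to\overline{\mathbb F}_p^{\,*}$, pinning down exactly one bad $\epsilon$ per index with $v_q(\lambda_i)=1$ — a harmless and clarifying expansion of the same idea.
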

\begin{proof} For a fixed $i=1,2,\ldots,k$ and for every $\epsilon\in\mu_{\infty}$ we have:
$$v_q(1-\lambda_i\epsilon q^{-1})=-1+v_q(\lambda_i-\epsilon^{-1}q)\geq\min\{0,v_q(\lambda_i)-1\},$$
so for all but finitely many $\epsilon\in\mu_{\infty,p}$ we have:
$$v_q(1-\lambda_i\epsilon q^{-1})=\min\{0,v_q(\lambda_i)-1\}.$$
Therefore for all but finitely many $\epsilon\in\mu_{\infty,p}$ we have:
$$v_q(P({\epsilon q^{-1}}))=
\min_{\zeta\in\mu_{\infty}}(v_q(P({\zeta q^{-1}})))=v_q(a)-k+
\sum_{i=1}^{\deg(P)}\min\{0,v_q(\lambda_i)-1\}\text{.}$$
\end{proof}
Let $\mathfrak c$ be a square-free effective divisor on $\mathcal C$ which is relatively prime to $\mathfrak{dn}$. Let $\alpha:F\backslash\mathbb A^*\rightarrow\mathbb C^*$ be a continuous character with finite image whose conductor $\mathfrak c'$ divides $\mathfrak c$.
\begin{lemma} The following holds:
$$\min_{\epsilon\in\mu_{\infty}}(v_q(G(E,\alpha,\mathfrak c,\epsilon)))=0.$$
Moreover the minimum is attained at all but finitely many $\epsilon\in\mu_{\infty,p}$.
\end{lemma}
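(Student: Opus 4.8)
The plan is to recognise $G(E,\alpha,\mathfrak c,t)$ as a polynomial with $p$-adically integral coefficients whose constant term is a root of unity, and then to read off the statement by reducing modulo the maximal ideal of the valuation ring of $\overline{\mathbb Q}_p$. The key observation is that for every $x\in\underline{\mathfrak c}$ the quantity $a_x:=a(x)q^{\deg(x)}$ is a rational integer: it is the coefficient of $t^{\deg(x)}$ in the power series $L_x(E,t)\in\mathbb Z[[t]]$. (If one prefers: $\mathfrak c$ is relatively prime to $\mathfrak n$, so $E$ has good reduction at $x$ and $a_x=q^{\deg(x)}+1-\#E_x(\mathbb F_{q^{\deg(x)}})$.) Hence each factor of $G$ rewrites as
$$-1+a(x)\alpha(x)(qt)^{\deg(x)}-\alpha(x)^2t^{2\deg(x)}=-1+a_x\alpha(x)t^{\deg(x)}-\alpha(x)^2t^{2\deg(x)},$$
and its coefficients $-1$, $a_x\alpha(x)$, $-\alpha(x)^2$ all lie in the valuation ring $\mathcal O\subset\overline{\mathbb Q}_p$ of $v_q$, since $a_x\in\mathbb Z$ and $\alpha$ takes values in $\mu_\infty$ ($\alpha$ having finite image). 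Therefore $G(E,\alpha,\mathfrak c,t)\in\mathcal O[t]$.

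Granting this, I would proceed as follows. Any $\epsilon\in\mu_\infty$ has $v_q(\epsilon)=0$, hence lies in $\mathcal O$, so evaluating a polynomial over $\mathcal O$ at $\epsilon$ gives an element of $\mathcal O$; thus $v_q(G(E,\alpha,\mathfrak c,\epsilon))\geq0$ for every $\epsilon\in\mu_\infty$, which is the inequality ``$\min\geq0$''. For the matching upper bound and the statement about where the minimum is attained, note that the constant term $G(E,\alpha,\mathfrak c,0)=\pm1$ is a unit of $\mathcal O$. Let $\overline G\in\overline{\mathbb F}_p[t]$ be the image of $G(E,\alpha,\mathfrak c,t)$ under reduction modulo the maximal ideal $\mathfrak m$ of $\mathcal O$; it has nonzero constant term, hence is a nonzero polynomial and so has only finitely many roots in $\overline{\mathbb F}_p$. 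Since the reduction map $\mu_{\infty,p}\rightarrow\overline{\mathbb F}_p^*$ is injective, only finitely many $\epsilon\in\mu_{\infty,p}$ satisfy $\overline G(\overline\epsilon)=0$, equivalently $v_q(G(E,\alpha,\mathfrak c,\epsilon))>0$; for every other $\epsilon\in\mu_{\infty,p}$ one has $v_q(G(E,\alpha,\mathfrak c,\epsilon))=0$. Putting the two halves together proves the lemma.

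The essential point is thus the integrality recognition in the first step: once $G$ is seen to be a polynomial over $\mathcal O$ with unit constant term, the rest is the standard fact that a nonzero polynomial over $\overline{\mathbb F}_p$ has finitely many zeros, together with the injectivity of Teichm\"uller reduction on $\mu_{\infty,p}$, exactly in the spirit of the proof of Lemma 4.2. Indeed one could alternatively deduce the lemma from Lemma 4.2 applied to $P(t):=G(E,\alpha,\mathfrak c,qt)$: there $P(0)$ is a unit, so $k=0$ and $v_q(a)=0$, while a brief computation with the quadratic $W^2-a_x\alpha(x)W+\alpha(x)^2$ (unit constant term, integral linear coefficient) shows that every reciprocal root $\rho$ of $G$ satisfies $v_q(\rho)=0$, so every reciprocal root of $P$ has $v_q=1$ and $l_q(P(t))=0$; but the direct argument above is the cleaner one.
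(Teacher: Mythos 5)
Your proof is correct and takes essentially the same route as the paper: you observe that $a(x)q^{\deg(x)}\in\mathbb Z$ and $\alpha$ takes root-of-unity values, so $G$ has coefficients in the valuation ring with unit constant term, giving $v_q\geq0$; you then justify that equality fails for only finitely many $\epsilon\in\mu_{\infty,p}$ by reducing modulo the maximal ideal and using injectivity of Teichm\"uller reduction. The paper argues factor by factor and states the ``all but finitely many'' step without spelling out the reduction-mod-$\mathfrak m$ mechanism, so your version is, if anything, slightly more explicit; the two arguments are mathematically identical.
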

\begin{proof} For every $x\in\underline{\mathfrak c}-\underline{\mathfrak c}'$
we have $q^{\deg(x)}a(x)\in\mathbb Z$. Hence for every $\epsilon\in\mu_{\infty}$
and $x$ as above we have:
$$v_q(-1+a(x)\alpha(x)q^{\deg(x)}\epsilon^{\deg(x)}-\alpha(x)^2\epsilon^{2\deg(x)})
\geq0,$$
and for all but finitely many $\epsilon\in\mu_{\infty,p}$ the left hand side is equal to $0$. The claim follows by taking the product over all
$x\in\underline{\mathfrak c}-\underline{\mathfrak c}'$. $\square$
\end{proof}
\begin{lemma} For every $y\in\mathbb A^*$ and $x\in\mathbb A$ there are $\eta\in F$, $u\in\mathcal O$ and $c\in\mathbb A^*$ such that
$(c)$ is a square-free effective divisor which is relatively prime to $\mathfrak n$ and 
$$x+u+y^{-1}\eta=\widehat c.$$
\end{lemma}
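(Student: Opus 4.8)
The plan is to reduce the assertion to an approximation statement for $\eta$ alone, and then to settle that statement with the Riemann--Roch theorem. Throughout, for a place $v$ of $F$ let $\mathrm{ord}_v$ be the normalised valuation, let $F_v$, $\mathcal O_v$, $\mathfrak m_v$ be the completion, its valuation ring and maximal ideal, and let $\pi_v$ be a uniformiser.

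\emph{Reduction.} First I would show that it suffices to produce $\eta\in F$ such that the ad\`ele $w:=x+y^{-1}\eta$ satisfies $\mathrm{ord}_v(w_v)\ge -1$ for all $v\in|\mathcal C|$ and $\mathrm{ord}_v(w_v)\ge 0$ for all $v\in\underline{\mathfrak n}$. Granting this, the set $\underline{\mathfrak c}:=\{v:\mathrm{ord}_v(w_v)=-1\}$ is finite (as $w$ is an ad\`ele) and disjoint from $\underline{\mathfrak n}$; for $v\in\underline{\mathfrak c}$ write $w_v=\beta_v\pi_v^{-1}+r_v$ with $\beta_v\in\mathcal O_v^*$, $r_v\in\mathcal O_v$ and put $c_v:=\beta_v^{-1}\pi_v$, while for $v\notin\underline{\mathfrak c}$ put $c_v:=1$. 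Then $c\in\mathbb A^*$, $(c)=\prod_{v\in\underline{\mathfrak c}}v$ is a square-free effective divisor relatively prime to $\mathfrak n$, and $\widehat c_v=\beta_v\pi_v^{-1}$ for $v\in\underline{\mathfrak c}$ while $\widehat c_v=0$ otherwise; hence $w-\widehat c\in\mathcal O$ (it equals $r_v$ on $\underline{\mathfrak c}$ and the integral element $w_v$ off $\underline{\mathfrak c}$), so with $u:=\widehat c-w\in\mathcal O$ we obtain $x+u+y^{-1}\eta=w+u=\widehat c$.

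\emph{An interpolation problem.} Writing $\omega_v:=-y_vx_v$, so that $(\omega_v)_v=-yx$ is an ad\`ele and hence $\omega_v\in\mathcal O_v$ for almost all $v$, the identity $x_v+y_v^{-1}\eta=y_v^{-1}(\eta-\omega_v)$ gives $\mathrm{ord}_v(w_v)=\mathrm{ord}_v(\eta-\omega_v)-\mathrm{ord}_v(y_v)$. Let $S$ be the finite set of places at which $\mathrm{ord}_v(y_v)\ne 0$, or $\omega_v\notin\mathcal O_v$, or $v\in\underline{\mathfrak n}$, and for $v\in S$ set $b_v:=\mathrm{ord}_v(y_v)$ if $v\in\underline{\mathfrak n}$ and $b_v:=\mathrm{ord}_v(y_v)-1$ otherwise. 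A direct check, using $\mathrm{ord}_v(y_v)=0$ and $\omega_v\in\mathcal O_v$ for $v\notin S$, shows that both requirements on $w$ hold as soon as
$$\mathrm{ord}_v(\eta-\omega_v)\ge b_v\quad(v\in S),\qquad\qquad\mathrm{ord}_v(\eta)\ge -1\quad(v\notin S).$$
So the task is to find $\eta\in F$ obeying these conditions.

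\emph{Construction of $\eta$, and the main obstacle.} Prescribing the polar parts of $\eta$ at $S$ and requiring $\eta$ regular elsewhere need not be possible when $g>0$, the obstruction living in $H^1(\mathcal C,\mathcal O_{\mathcal C})$; the key device is therefore to allow $\eta$ extra \emph{simple} poles along an auxiliary square-free effective divisor $\mathfrak r$, relatively prime to $S$, of sufficiently large degree --- this is harmless for the conditions above, and it is what forces the relevant $H^1$ to vanish. Concretely, fix $M\ge\max_{v\in S}\max(-b_v,-\mathrm{ord}_v(\omega_v))$ and consider the evaluation map from the Riemann--Roch space $H^0\big(\mathcal C,\mathcal O_{\mathcal C}(\mathfrak r\cdot\prod_{v\in S}v^{M})\big)$ to $\bigoplus_{v\in S}\mathfrak m_v^{-M}\mathcal O_v/\mathfrak m_v^{b_v}\mathcal O_v$. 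Its kernel is $H^0\big(\mathcal C,\mathcal O_{\mathcal C}(\mathfrak r\cdot\prod_{v\in S}v^{-b_v})\big)$, and the long exact sequence attached to the evident short exact sequence of sheaves embeds its cokernel into $H^1\big(\mathcal C,\mathcal O_{\mathcal C}(\mathfrak r\cdot\prod_{v\in S}v^{-b_v})\big)$. Choosing $\deg\mathfrak r$ large enough that $\deg\big(\mathfrak r\cdot\prod_{v\in S}v^{-b_v}\big)\ge 2g-1$ makes this $H^1$ vanish, so the evaluation map is surjective; since its target contains $(\omega_v\bmod\mathfrak m_v^{b_v})_{v\in S}$, a preimage $\eta$ exists, and by construction $\eta$ has at most simple poles off $S$ and $\mathrm{ord}_v(\eta-\omega_v)\ge b_v$ for $v\in S$ --- exactly the required conditions. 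The one delicate point is precisely this last step: one must not fix the pole divisor of $\eta$ (equivalently, the eventual $\mathfrak c$) in advance, because the number of simple poles that are needed is not bounded in terms of $x$ and $y$; the remaining ingredients --- the local recipe for $c$ and $u$, and the valuation bookkeeping around the id\`ele $y$ --- are routine.
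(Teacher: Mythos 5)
Your proof is correct and follows essentially the same route as the paper: both introduce an auxiliary square-free effective divisor of sufficiently large degree (your $\mathfrak r$, the paper's $\mathfrak b$) to allow extra simple poles for $\eta$, invoke Riemann--Roch (equivalently, $H^1$-vanishing in degree $\geq 2g-1$) to make the relevant evaluation/restriction map surjective, and then read off $c$ and $u$ locally from the ad\`ele $x+y^{-1}\eta$. The only difference is presentation: the paper phrases the surjectivity in terms of the restriction map $H^0(\mathcal C,\mathcal O_{\mathcal C}(\mathfrak{zyb}))\to H^0(Z,\cdot)$ to the thickened pole locus $Z$ of $x$, while you rewrite it as a congruence-interpolation problem $\mathrm{ord}_v(\eta-\omega_v)\ge b_v$ at a finite set $S$; the underlying sheaf-theoretic computation is the same. (Incidentally, your closing remark that the number of needed simple poles ``is not bounded in terms of $x$ and $y$'' is not quite right --- your own choice of $\deg\mathfrak r$ gives an explicit bound in terms of $g$, $(y)$ and the poles of $x$, and the paper makes the analogous choice $\deg\mathfrak b\geq 2g-1-\deg\mathfrak y$ in advance --- but this does not affect the argument.)
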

\begin{proof} Let $z\in\mathbb A^*$ be the unique id\`ele such that for every $v\in|\mathcal C|$ we have $z_v=x^{-1}_v$, if $x_v\not\in\mathcal O_v$, and $z_v=1$, otherwise. Then $\mathfrak z=(z)$ is an effective divisor. Let $\mathfrak y$ denote the divisor of $y^{-1}$ and let $\mathfrak b$ be a square-free effective divisor whose degree is at least $2g-1-\deg(\mathfrak y)$ and which is relatively prime to $\mathfrak{nz}$. Let $Z$ denote the closed scheme of $\mathcal C$ whose sheaf of ideals is $\mathcal O_{\mathcal C}(\mathfrak z)^{\vee}\subseteq\mathcal O_{\mathcal C}$ where for every vector bundle $\mathcal F$ on $\mathcal C$ we let $\mathcal F^{\vee}$ denote the dual of $\mathcal F$. We have an exact sequence
$$\CD H^0(\mathcal C,\mathcal O_{\mathcal C}(\mathfrak{yb}))@>>> H^0(\mathcal C,\mathcal O_{\mathcal C}(\mathfrak{zyb}))
@>i_Z>>H^0(Z,\mathcal O_{\mathcal C}(\mathfrak{zyb})|_Z)\endCD$$
where the first map is induced by the inclusion $\mathcal O_{\mathcal C}
(\mathfrak{yb})\subset\mathcal O_{\mathcal C}(\mathfrak{zyb})$ and the second map $i_Z$ is the restriction map. By the Riemann-Roch theorem for curves:
$$\dim_{\mathbb F_q}H^0(\mathcal C,\mathcal F)=2-2g+\deg(\mathcal F)$$
for every line bundle $\mathcal F$ on $\mathcal C$ whose degree is at least $2g-1$. Comparing the dimensions of $H^0(\mathcal C,\mathcal O_{\mathcal C}(\mathfrak{yb}))$ and $H^0(\mathcal C,\mathcal O_{\mathcal C}(\mathfrak{zyb}))$ we get that the image of $i_Z$  has dimension $\deg(\mathfrak z)$ over $\mathbb F_q$. But the dimension of $H^0(Z,\mathcal O_{\mathcal C}(\mathfrak{zyb})|_Z)$ is the same hence $i_Z$ is surjective.

Let $b\in\mathbb A^*$ be an id\`ele such that $(b)=\mathfrak b$ and $b_v=1$ for every $v\not\in\underline{\mathfrak b}$. Recall that for every id\`ele $t\in\mathbb A^*$ we have: $H^0(\mathcal C,\mathcal O_{\mathcal C}(t))=F\cap t^{-1}\mathcal O$. Moreover for every such $t$ we have: $H^0(Z,\mathcal O_{\mathcal C}(t\mathfrak z)|_Z)=(tz)^{-1}\mathcal O/t^{-1}\mathcal O$. Under these identifications $i_Z$ is the composition of the inclusion $F\cap (zb)^{-1}y\mathcal O\rightarrow(zb)^{-1}y\mathcal O$ and the reduction map: $(zb)^{-1}y\mathcal O\rightarrow (zb)^{-1}y\mathcal O/b^{-1}y\mathcal O$. Therefore by the above there is an $\eta\in F$ such that for every $v\in|\mathcal C|$ we have: $(zby^{-1}\eta)_v\in-1+z_v\mathcal O_v$, if $v\in\underline{\mathfrak z}$, and $(zy^{-1}b\eta)_v=(y^{-1}b\eta)_v\in\mathcal O_v$, otherwise. Because $\mathfrak b$ is relatively prime to $\mathfrak z$ we get that $(y^{-1}\eta)_v\in-x_v+\mathcal O_v$, if $v\in\underline{\mathfrak z}$. Let $c\in\mathbb A^*$ be the unique id\`ele such that $c_v=(y\eta^{-1})_v$, if $v\not\in\underline{\mathfrak z}$ and $(y^{-1}\eta)_v\not\in\mathcal O_v$, and $c_v=1$, otherwise. By the above $(c)$ divides $\mathfrak b$ so it is a square-free effective divisor relatively prime to $\mathfrak n$. Let $u=-x-y^{-1}\eta+\widehat c.$ By the above $u\in\mathcal O$ so this choice of $\eta$, $c$ and $u$ satisfies the requirements of the claim.
\end{proof}
Let $\mathbb X(\mathfrak n)$ denote the set of tamely ramified continuous characters $F\backslash\mathbb A^*\rightarrow\mathbb C^*$ with finite image whose conductor is relatively prime to $\mathfrak n$. 
\begin{prop} The following holds:
$$m(E)\geq d\cdot\sup_{\alpha\in\mathbb X(\mathfrak n)}\big(
l_q(L(\sigma_E\otimes\alpha,t))+g-1-v_q(\epsilon(\alpha^{-1}))\big).$$
When $p$ does not divide the order of $\text{\rm Pic}_0(\mathcal C)(\mathbb F_q)$ the two sides above are actually equal.  
\end{prop}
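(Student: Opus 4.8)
The plan is to extract the value $c(E)$ — equivalently $m(E)$ — from the toric integrals $I(\psi_E,\mathfrak c,\alpha,z)$, using Proposition 3.9 to convert these into twisted $L$-values, and then Lemmas 4.2 and 4.4 to read off $p$-adic valuations. First I would fix a tamely ramified character $\alpha\in\mathbb X(\mathfrak n)$ with conductor $\mathfrak c'$, choose a square-free effective divisor $\mathfrak c$ that is a multiple of $\mathfrak c'$ and relatively prime to $\mathfrak{dn}$, and examine $I(\psi_E,\mathfrak c,\alpha,z)$ for $z$ running over roots of unity. The integrand is a finite $\mathbb C$-linear combination of values of $\psi_E$ with coefficients of the shape $\alpha(y)z^{\deg(y)}$, each an algebraic number that is a root of unity (times the rational values $\psi_E$ takes); hence $I(\psi_E,\mathfrak c,\alpha,\epsilon)$ lies in $c(E)\cdot\mathbb Z[\mu_\infty]$ for every $\epsilon\in\mu_\infty$, and therefore $v_q(I(\psi_E,\mathfrak c,\alpha,\epsilon))\geq v_q(c(E)) = -m(E)/d$ (recall $v_q(q)=1$ and $q=p^d$). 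Taking the infimum over $\epsilon$ and all admissible $\alpha$ gives one inequality provided I can compute the left side.

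The computation of $\min_{\epsilon}v_q(I(\psi_E,\mathfrak c,\alpha,\epsilon))$ proceeds by feeding the closed form of Proposition 3.9 into Lemma 4.2: the factor $\alpha(\mathfrak d^2\mathfrak c')$ is a root of unity so contributes valuation zero; the factor $\prod_{x\in\underline{\mathfrak c}}(q^{\deg x}-1)^{-1}$ is a $p$-adic unit and contributes zero; the factor $(z^2/q)^{g-1}$ evaluated at $z\in\mu_\infty$ contributes $-(g-1)$; by Lemma 4.4 the polynomial $G(E,\alpha,\mathfrak c,z)$ contributes minimal valuation $0$, attained at all but finitely many $\epsilon\in\mu_{\infty,p}$; and by Lemma 4.2 applied to $P(t)=L(\sigma_E\otimes\alpha,t)$ — whose leading coefficient is $a=1$ and whose $t$-adic order $k$ is $0$ since the constant term of an $L$-function is $1$ — the minimum of $v_q(L(\sigma_E\otimes\alpha,\epsilon q^{-1}))$ over $\epsilon\in\mu_\infty$ equals $-l_q(L(\sigma_E\otimes\alpha,t))$, again attained at all but finitely many $\epsilon\in\mu_{\infty,p}$. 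Since the two exceptional sets are finite, a common good $\epsilon\in\mu_{\infty,p}$ exists, so $\min_\epsilon v_q(I) = -l_q(L(\sigma_E\otimes\alpha,t)) - (g-1) + v_q(\epsilon(\alpha^{-1}))$. Combining with $v_q(I)\geq -m(E)/d$ and rearranging yields the claimed lower bound on $m(E)$.

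For the reverse inequality when $p\nmid\#\mathrm{Pic}_0(\mathcal C)(\mathbb F_q)$, I would show that the values of $\psi_E$ are themselves captured, up to a $p$-adic unit, by toric integrals of the above type: given any $h\in GL_2(\mathbb A)$, using the approximation $GL_2(\mathbb A)=GL_2(F)P(\mathbb A)Z(\mathbb A)\mathbb K_0(\mathfrak n)$ and the $\mathbb K_0(\mathfrak n)$- and $Z$-invariance one reduces to evaluating $\psi_E$ at matrices $\left(\begin{smallmatrix} y & y\widehat c\\ 0 & 1\end{smallmatrix}\right)$, and Lemma 4.5 is exactly what permits this reduction (it produces, for any prescribed $y,x$, suitable $\eta\in F$, $u\in\mathcal O$, $c\in\mathbb A^*$ with $(c)$ square-free, effective, prime to $\mathfrak n$, and $x+u+y^{-1}\eta=\widehat c$). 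The integral $I(\psi_E,\mathfrak c,\alpha,z)$ is then, by orthogonality of characters on the finite group $F^*\backslash\mathbb A^*/\mathcal O^*\cdot(\text{level part})$ — whose order is essentially $\#\mathrm{Pic}_0(\mathcal C)(\mathbb F_q)$ times a $p$-unit, which is where the hypothesis enters — expressible via a $p$-adically unit-normalised finite linear combination of the values $\psi_E\left(\begin{smallmatrix} y & y\widehat c\\ 0&1\end{smallmatrix}\right)$, and conversely each such value is recovered from the $I(\psi_E,\mathfrak c,\alpha,\epsilon)$ by Fourier inversion over $\alpha$ and $\epsilon$ with coefficients that are $p$-adic units (the denominators that appear in the inversion are orders of the relevant finite abelian groups, which under the hypothesis are prime to $p$). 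This forces $v_q(c(E)) = \min_{\alpha,\epsilon} v_q(I(\psi_E,\mathfrak c,\alpha,\epsilon))$, i.e. equality in the proposition.

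The main obstacle I expect is the reverse direction: controlling the $p$-adic denominators introduced by Fourier inversion on the relevant finite quotients of $F^*\backslash\mathbb A^*$ (and the $GL_2$-double cosets), and checking that the hypothesis $p\nmid\#\mathrm{Pic}_0(\mathcal C)(\mathbb F_q)$ is precisely what guarantees those denominators are $p$-units — together with the bookkeeping needed to ensure that a single square-free $\mathfrak c$ prime to $\mathfrak{dn}$ can be chosen simultaneously for all the characters and all the matrix entries arising from Lemma 4.5. The forward direction is essentially a direct substitution into Proposition 3.9 followed by Lemmas 4.2 and 4.4 and should be routine.
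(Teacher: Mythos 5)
Your plan matches the paper's proof closely: the forward direction substitutes Proposition 3.9 into the toric integral and reads off $p$-adic valuations via Lemmas 4.2 and 4.3, and the reverse direction uses Lemma 4.4 to reduce to evaluating $\psi_E$ on matrices $\left(\begin{smallmatrix} y & y\widehat c\\0&1\end{smallmatrix}\right)$, then performs Fourier inversion over the finite group $G$ (a quotient of $\mathbb A^*/F^*\mathcal O_{\mathfrak c}$) whose order is prime to $p$ exactly under the stated hypothesis. One point to tighten in the forward direction: the assertion that $I(\psi_E,\mathfrak c,\alpha,\epsilon)$ lies in $c(E)\cdot\mathbb Z[\mu_\infty]$ silently drops the Haar measure factor, and what is actually true (and needed) is that the integrand is constant on cosets of $\mathcal U_{\mathfrak c}$, whose measure $\mu^*(\mathcal U_{\mathfrak c})$ is a rational number with denominator prime to $p$ because $\mathfrak c$ is square-free (so $|\mathcal O^*/\mathcal O_{\mathfrak c}|=\prod_{x\mid\mathfrak c}(q^{\deg x}-1)$ is a $p$-unit); this yields $I(\psi_E,\mathfrak c,\alpha,\epsilon)\in p^{-m(E)}\mathbb Z_p[\epsilon]$ and hence $v_q(I)\geq -m(E)/d$, which is what you use.
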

\begin{proof} Fix an isomorphism $ \overline{\mathbb Q}_p\cong\mathbb C$ and let $\alpha\in\mathbb X(\mathfrak n)$ have conductor $\mathfrak c$. Without the loss of generality we may assume that $\mathfrak c$ is relatively prime to $\mathfrak d$ by changing $\tau$, if necessary. According to Proposition 3.9 we have:
\begin{equation}
I(\psi_E,\mathfrak c,\alpha,z)=
\alpha(\mathfrak d^2\mathfrak c')\epsilon(\alpha^{-1})
\prod_{x\in\underline{\mathfrak c}}(q^{\deg(x)}-1)^{-1}
(z^2/q)^{g-1}L(\sigma_E\otimes\alpha,zq^{-1}).
\end{equation}
Let $\mathcal O_{\mathfrak c}<\mathbb A^*$ denote the subgroup $\mathcal O^*$, if $\mathfrak c=1$, and $1+\mathfrak c\mathcal O$, otherwise. The integrand of the integral on the left hand side of (4.5.1) is constant on the cosets of the subgroup $\mathcal U_{\mathfrak c}<F^*\backslash\mathbb A^*$, where $\mathcal U_{\mathfrak c}=(\mathbb F_q^*\cap\mathcal O_c)\backslash\mathcal O_c$. Because $\mathfrak c$ is square-free $p$ does not divide
$|\mathcal O^*/\mathcal O_{\mathfrak c}|$. Hence $\mu^*(\mathcal U_{\mathfrak c})$ is a rational number whose denominator is not divisible by $p$. Therefore $I(\psi_E,\mathfrak c,\alpha,\epsilon)\in p^{-m(E)}\mathbb Z_p[\epsilon]$ for every $\epsilon\in\mu_{\infty}$. In particular $v_q(I(\psi_E,\mathfrak c,\alpha,\epsilon))$ is at least $-m(E)/d$. Note that for every $\alpha\in\mathbb X(\mathfrak n)$ the $L$-function $L(\sigma_E\otimes\alpha,t)\in1+t\overline{\mathbb Q}_p[t]$ hence by Lemma 4.2 there is an $\epsilon\in\mu_{\infty}$ such that $v_q(L(\sigma_E\otimes\alpha,{\epsilon q^{-1}}))=-l_q(L(\sigma_E\otimes\alpha,t))$.
Hence the first part of the claim above is true.

Assume now that $p$ does not divide the order of $\text{\rm Pic}_0(\mathcal C)(\mathbb F_q)$ and let $h(E)$ denote the right hand side of the inequality in the claim above. As we already noted in the proof of Proposition 2.4 in order to show the second half of the claim we only need to show that $p^{h(E)}\psi_E(g)\in \mathbb Z$ for every element $g=\left(\smallmatrix y&yx\\0&1\endsmallmatrix\right)\in P(\mathbb A)$. By Lemma 4.4 there are $\eta\in F$, $u\in\mathcal O$ and $c\in\mathbb A^*$ such that $(c)$ is a square-free effective divisor which is relatively prime to $\mathfrak n$ and $x+u+y^{-1}\eta=\widehat c$.  Because
$$\left(\begin{matrix}1&\eta\\0&1\end{matrix}\right)\cdot
\left(\begin{matrix} y&yx\\0&1\end{matrix}\right)\cdot
\left(\begin{matrix}1&u\\0&1\end{matrix}\right)=
\left(\begin{matrix} y&y(x+u+y^{-1}\eta)\\0&1\end{matrix}\right)$$
and $\psi_E$ is invariant on the left with respect to $P(F)$ and on the right with respect to $P(\mathcal O)$, we may assume that $x=\widehat c$.

Let $\mathfrak c$ be $(c)$ and let $H$ denote the quotient group $\mathbb
A^*/F^*\mathcal O_{\mathfrak c}$ where we continue to use the notation above. Then $H$ can be decomposed as a direct product $G\times\mathbb Z$ where $G$ is a finite subgroup. By class field theory $|G|=|\text{Pic}_0(\mathcal C)|\cdot|\mathcal O^*/(\mathcal O_{\mathfrak c}\mathbb F_q^*)|$. As we already noted above $p\!\!\not||\mathcal O^*/\mathcal O_{\mathfrak c}|$ hence $p$ does not divide the order of $G$ by our assumption. Let $(\cdot)_H:\mathbb A^*\rightarrow H$ be the quotient map and let $[\cdot]:H\rightarrow G$ denote the projection onto the factor $G$. Note
that for every $y\in\mathbb A^*$ the value
$$\psi_E(\left(\begin{matrix} y&y\widehat c\\0&1\end{matrix}\right))t^{\deg(y)}\in\mathbb Q[G][t,t^{-1}]$$
only depends on $(y)_H$. Let $f:H\rightarrow\mathbb Q[t,t^{-1}]$
be the corresponding function and define $I=
\sum_{\substack{g\in G\\n\in\mathbb Z}} c_{g,n}gt^n\in\mathbb Q[G][t,t^{-1}]$ by the formula:
$$I=\sum_{g\in G}\big(\sum_{\substack{h\in H\\ [h]=g}}
 f(h)\big)g.$$
The function $I$ is well-defined because for every $g\in G$ the set of those
$h\in H$ such that $[h]=g$ and $f(y)\neq0$ is finite. Moreover the set
$\{c_{g,n}|g\in G,n\in\mathbb Z\}$ and the image of the function $\mathbb A^*\rightarrow\mathbb
Q$ given by the rule $y\mapsto\psi_E(\left(\smallmatrix y&y\widehat c\\0&1\endsmallmatrix\right))$ are equal hence it will be sufficient to show that $p^{h(E)}I\in\mathbb Z[G][t,t^{-1}]$. For every group homomorphism $\alpha:G\rightarrow\overline{\mathbb Q}_p^*$ by slight abuse of notation let the same symbol $\alpha$ denote the unique ring homomorphism $\mathbb Q[G][t,t^{-1}]\rightarrow\overline{\mathbb Q}_p[t,t^{-1}]$ whose restriction onto $G$ is $\alpha$ and $\alpha(t)=t$. Because $p$ does not divide $|G|$ it will be sufficient to show that $v_q(a)\leq-h(E)/d$ for every coefficient $a$ of $\alpha(I)$ for every $\alpha$ as above. Let the symbol $\alpha$ denote also the composition $\alpha\circ[\cdot]\circ(\cdot)_H:\mathbb A^*\rightarrow\overline{\mathbb Q}_p^*$. Choose the character $\tau$ so that $\mathfrak d$ is relatively prime to $\mathfrak c$. Then $\alpha(I)$ is the polynomial:
$$\alpha(I)=
\alpha(\mathfrak d^2\mathfrak c')\epsilon(\alpha^{-1})
\prod_{x\in\underline{\mathfrak c}}(q^{\deg(x)}-1)^{-1}
\left(\frac{t^2}{q}\right)^{g-1}
G(E,\alpha,\mathfrak c,t)L(\sigma_E\otimes\alpha,tq^{-1})$$
by Proposition 3.9. Because for every polynomial $P(t)=\sum_{k=0}^Nb_kt^k\in\overline{\mathbb Q}_p$we have:
$$\min_{\epsilon\in\mu_{\infty}}(v_q(P(\epsilon)))=\min_{0\leq k\leq N}(v_q(b_k)),$$
the claim now follows from Lemmas 4.2 and 4.3.
\end{proof}

\section{Galois module structure of the coherent cohomology of elliptic surfaces}

\begin{defn} In this chapter $G$ will be a finite group. Let $A$ be a Noetherian ring and let $Y$ be a scheme which is separated and of finite type over Spec$(A)$. By an $A[G]$-module on $Y$ we mean a sheaf of $A[G]$-modules on $Y$. These form the objects of a category whose morphisms are maps respecting the $A[G]$-module structure. Suppose now that $\mathcal F$ is an $A[G]$-module on $Y$ which is also an $\mathcal O_Y$-module in such a way that the actions of $\mathcal O_Y$ and $G$ commute and the $A[G]$-module structure of $\mathcal F$ respects the structure morphism $Y\rightarrow\text{Spec}(A)$. If $\mathcal F$ is also a quasi-coherent (resp. coherent) $\mathcal O_Y$-module, then we will call $\mathcal F$ a quasi-coherent (resp. coherent) $\mathcal O_Y[G]$-module. As noted in [1] on page 447 there are enough injectives in the category of $A[G]$-modules on $Y$ hence the global section functor $\Gamma$ has a derived functor into the localisation of the category of complexes of $A[G]$-complexes bounded from below with respect to the multiplicative system of quasi-isomorphisms which will be denoted by $R\Gamma^+$. 
\end{defn}
\begin{defn} An $A[G]$-module $M$ is cohomologically trivial if the Tate cohomology group $\widehat H^i(H,M)$ vanishes for all subgroups $H$ of $G$ and for all integers $i$. Let $CT(A[G])$ denote the Grothendieck group of all finitely generated $A[G]$-modules which are cohomologically trivial. For every cohomologically trivial $A[G]$-module $M$ let $[M]$ denote its class in $CT(A[G]))$. Suppose that $\mathcal F$ is a quasi-coherent $\mathcal O_Y$-module such that each stalk of $\mathcal F$ is a cohomologically trivial $A[G]$-module. Then by Theorem 1.1 of [1] on page 447 the complex $R\Gamma^+(\mathcal F)$ is isomorphic in the derived category of $A[G]$-modules to a bounded complex $M^*$ of finitely generated cohomologically trivial $A[G]$-modules. Moreover the Euler characteristic $\sum(-1)^i[M^i]$ in $CT(A[G])$ only depends on $\mathcal F$ and will be denoted by $\chi(\mathcal F)$.
\end{defn}
\begin{defn} Let $X$ be a normal scheme which is of finite type over Spec$(A)$. Assume that the finite group $G$ acts on $X$ on the left. Let $\mathcal F$ be a coherent sheaf on $X$. A $G$-linearisation on $\mathcal F$ is a collection $\Psi=\{\psi_g\}_{g\in G}$ of isomorphisms $\psi_g:g_*(\mathcal F)\rightarrow\mathcal F$ for every $g\in G$ such that
\begin{enumerate}
\item[$(i)$] we have $\psi_1=\text{Id}_{\mathcal F}$,
\item[$(ii)$] for every $g$, $h\in G$ we have $\psi_{hg}=\psi_h\circ
h_*(\psi_g)$,
\end{enumerate}
where $h_*(\psi_g):(hg)_*(\mathcal F)=h_*(g_*(\mathcal F))\rightarrow h_*(\mathcal F)$ is the direct image of the map $\psi_g:g_*(\mathcal F)\rightarrow\mathcal F$ under the action of $h$. We define a $G$-sheaf over $X$ to be a sheaf on $X$ equipped with a $G$-linearisation. A coherent $G$-sheaf is a coherent sheaf on $X$ equipped with a $G$-linearisation $\Psi$ such that $\psi_g:g_*(\mathcal F)\rightarrow\mathcal F$ is $\mathcal O_X$-linear for every $g\in G$. 
\end{defn}
\begin{defn} Let $f:X\rightarrow Y$ be a tame $G$-cover as defined in Definition 2.2 of [1] on page 451 and let $\mathcal F$ be a coherent $G$-sheaf on $X$. The $G$-linearisation on $\mathcal F$ induces an $\mathcal O_Y$-linear action of $G$ on the direct image sheaf $f_*(\mathcal F)$ which makes the latter a coherent $\mathcal O_Y[G]$-module. By Theorem 2.7 of [1] on page 452 each stalk of the $\mathcal O_Y[G]$-sheaf $f_*(\mathcal F)$ is a cohomologically trivial $A[G]$-module. Hence the Euler characteristic $\chi(f_*(\mathcal F))\in CT(A[G])$ introduced in Definition 5.2 is well-defined. The later will be denoted by 
$\chi(G,\mathcal F)$.
\end{defn}
Suppose now that $A$ is a field and its characteristic does not divide the order of $G$. Then every finitely generated $A[G]$-module is cohomologically trivial. Also assume that $Y$ is proper over $\text{\rm Spec}(A)$ and let $\mathcal F$ be again a coherent $G$-sheaf on $X$. By the above for every $n\in\mathbb N$ the cohomology group $H^n(X,\mathcal F)$ is a cohomologically trivial, finitely generated $A[G]$-module with respect to the natural $A[G]$-action.
\begin{lemma} With these assumptions we have
$$\chi(G,\mathcal F)=\sum_{n\in\mathbb N}(-1)^n
[H^n(X,\mathcal F)]\in CT(A[G]).$$
\end{lemma}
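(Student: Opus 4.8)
The plan is to reduce the statement to two standard facts: that $f$ being finite makes $f_*$ exact and identifies the cohomology of $\mathcal F$ with that of $f_*(\mathcal F)$ $G$-equivariantly, and that in the Grothendieck group of an abelian category the Euler characteristic of a bounded complex agrees with the alternating sum of the classes of its cohomology objects.

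First I would unwind the definitions. By Theorem 2.7 of [1] each stalk of $f_*(\mathcal F)$ is a cohomologically trivial $A[G]$-module, so by Theorem 1.1 of [1] the complex $R\Gamma^+(f_*(\mathcal F))$ is isomorphic in the derived category of $A[G]$-modules to a bounded complex $M^\bullet$ of finitely generated cohomologically trivial $A[G]$-modules, and by Definition 5.2 and Definition 5.4 we have $\chi(G,\mathcal F)=\chi(f_*(\mathcal F))=\sum_i(-1)^i[M^i]$ in $CT(A[G])$. Since forgetting the $G$-action is an exact functor from $A[G]$-modules to $A$-modules, the cohomology objects of $M^\bullet$ computed in the category of $A[G]$-modules are precisely the groups $H^n(Y,f_*(\mathcal F))$ together with their natural $A[G]$-structure; because $Y$ is proper over $\operatorname{Spec}(A)$ and $\mathcal F$ is coherent these are finite-dimensional over $A$, hence finitely generated over $A[G]$.

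Next I would invoke that a tame $G$-cover $f\colon X\to Y$ (Definition 2.2 of [1]) is in particular a finite, hence affine, morphism, so $f_*$ is exact on quasi-coherent sheaves and the Leray spectral sequence degenerates to canonical isomorphisms $H^n(Y,f_*(\mathcal F))\cong H^n(X,\mathcal F)$ for all $n$. These isomorphisms are $G$-equivariant: the $\mathcal O_Y[G]$-module structure on $f_*(\mathcal F)$ used in Definition 5.4 is by construction induced from the $G$-linearisation of $\mathcal F$, and the comparison map $H^n(X,\mathcal F)\to H^n(Y,f_*(\mathcal F))$ is functorial in $\mathcal F$, hence intertwines the two actions. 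Thus the cohomology of $M^\bullet$ in the category of finitely generated $A[G]$-modules is identified with $\{H^n(X,\mathcal F)\}_n$ as $A[G]$-modules.

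Finally, since the characteristic of the field $A$ does not divide $|G|$, every finitely generated $A[G]$-module is cohomologically trivial, so $CT(A[G])$ is simply the Grothendieck group of the abelian category of finitely generated $A[G]$-modules. In the Grothendieck group of any abelian category a bounded complex $M^\bullet$ satisfies $\sum_i(-1)^i[M^i]=\sum_i(-1)^i[H^i(M^\bullet)]$, obtained by splicing the complex into short exact sequences via its cocycles and coboundaries; combining this identity with the equivariant identification above yields $\chi(G,\mathcal F)=\sum_n(-1)^n[H^n(X,\mathcal F)]$. I expect the only genuine subtlety to be the bookkeeping in the middle step, namely checking that the $G$-action coming from the linearisation of $\mathcal F$, the $\mathcal O_Y[G]$-structure on $f_*(\mathcal F)$, and the action on the representing complex $M^\bullet$ all coincide; the exactness of $f_*$ for a finite morphism and the $K_0$ identity for bounded complexes are routine.
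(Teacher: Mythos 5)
Your argument is correct and matches the paper's: both use the finiteness (hence affineness) of $f$ to kill $R^if_*(\mathcal F)$ for $i>0$ and thereby identify $H^n(X,\mathcal F)$ with $H^n(Y,f_*(\mathcal F))$ $G$-equivariantly, and then invoke an Euler-characteristic identity in $CT(A[G])$. The only difference is that the paper cites Proposition 1.5 of [1] for this last step, whereas you re-derive it from the observation that under the standing hypothesis $\mathrm{char}(A)\nmid|G|$ the group $CT(A[G])$ is the full Grothendieck group of finitely generated $A[G]$-modules, so the usual splicing argument for bounded complexes applies.
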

\begin{proof} Because finite maps are affine the higher derived sheaves $R^if_*(\mathcal F)$ are vanishing. Hence $H^n(X,\mathcal F)=H^n(Y,f_*(\mathcal F))$ as $A[G]$-modules. Now the claim follows from Proposition 1.5 of [1] on page 449.
\end{proof}
In addition to the assumptions above also suppose now that $f:X\rightarrow Y$ above is a map of smooth, projective curves over Spec$(A)$. Let $\mathcal L$ be a line bundle on $Y$. The line bundle $f^*(\mathcal L)$ on $X$ is naturally equipped with the structure of a coherent $G$-sheaf. 
\begin{lemma} With the same notation and assumptions as above the following equation holds in $CT(A[G])$:
$$\chi(G,f^*(\mathcal L))=\chi(G,\mathcal O_X)+\deg(\mathcal L)[A[G]].$$
\end{lemma}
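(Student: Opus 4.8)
The plan is to reduce to line bundles of the form $\mathcal O_Y(y)$ for a closed point $y$ of $Y$ and then argue by induction on the degree. Since $A$ is a field whose characteristic does not divide $|G|$, every finitely generated $A[G]$-module is cohomologically trivial, so $CT(A[G])$ is the Grothendieck group of all finitely generated $A[G]$-modules, and by Lemma 5.5 we have $\chi(G,\mathcal F)=\sum_n(-1)^n[H^n(X,\mathcal F)]$ for every coherent $G$-sheaf $\mathcal F$ on $X$. A short exact sequence of coherent $G$-sheaves on $X$ therefore gives a $G$-equivariant long exact cohomology sequence, so $\chi(G,-)$ is additive on such sequences. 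The base case $\mathcal L=\mathcal O_Y$ is immediate: $f^*\mathcal O_Y=\mathcal O_X$ with its tautological $G$-linearisation and $\deg(\mathcal O_Y)=0$. Since $\text{Pic}(Y)$ is generated by the classes $[\mathcal O_Y(y)]$, $y\in|Y|$, it suffices to prove that for every line bundle $\mathcal L$ on $Y$ and every closed point $y$ of $Y$
$$\chi(G,f^*(\mathcal L\otimes\mathcal O_Y(y)))=\chi(G,f^*\mathcal L)+\deg(y)\cdot[A[G]],$$
and then feed this (used in both directions, starting from $\mathcal O_Y$) into the induction.

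For the inductive step, consider the short exact sequence of $\mathcal O_Y$-modules $0\to\mathcal L\to\mathcal L\otimes\mathcal O_Y(y)\to i_{y,*}\kappa(y)\to0$, where $i_y\colon\text{Spec}\,\kappa(y)\hookrightarrow Y$ is the closed immersion. As $f$ is a finite morphism of smooth curves it is flat, so $f^*$ is exact and we obtain a short exact sequence $0\to f^*\mathcal L\to f^*(\mathcal L\otimes\mathcal O_Y(y))\to f^*(i_{y,*}\kappa(y))\to0$ of coherent $G$-sheaves on $X$. By additivity it remains to show $\chi(G,f^*(i_{y,*}\kappa(y)))=\deg(y)[A[G]]$. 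But $f^*(i_{y,*}\kappa(y))$ is the pushforward to $X$ of the structure sheaf of the scheme-theoretic fibre $X_y=f^{-1}(y)$, which is $0$-dimensional; hence its higher cohomology vanishes and $\chi(G,f^*(i_{y,*}\kappa(y)))=[\Gamma(X_y,\mathcal O_{X_y})]$, the class of the finite $\kappa(y)[G]$-algebra $\Gamma(X_y,\mathcal O_{X_y})$ regarded as an $A[G]$-module.

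The heart of the argument is the identification $[\Gamma(X_y,\mathcal O_{X_y})]=\deg(y)[A[G]]$. The points of $X_y$ form a single $G$-orbit; fixing a point $x_0$ with stabiliser $D=D_{x_0}$ and (tame) inertia group $I=I_{x_0}$, which is a normal subgroup of $D$ with $D/I\cong\text{Gal}(\kappa(x_0)/\kappa(y))$, one has $\Gamma(X_y,\mathcal O_{X_y})\cong\text{Ind}_D^G M$ as $G$-modules, where $M=\mathcal O_{X,x_0}/\mathfrak m_y\mathcal O_{X,x_0}=\mathcal O_{X,x_0}/\mathfrak m_{x_0}^{e}$ and $e$ is the ramification index at $x_0$, which is prime to $p$ by tameness. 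I would then compute $M$ after base change to $\overline{\kappa(y)}$. The residue extension $\kappa(x_0)/\kappa(y)$ is separable and Galois with group $D/I$, so $M\otimes_{\kappa(y)}\overline{\kappa(y)}$ decomposes according to the embeddings $\kappa(x_0)\hookrightarrow\overline{\kappa(y)}$, which $D/I$ permutes simply transitively; the summand fixed by $I$ is $\overline{\kappa(y)}[t]/(t^{e})$, on which $I$ acts as follows. Since $I$ is cyclic of order $e$ prime to the characteristic, tameness lets one choose a uniformiser $t$ at $x_0$ that is an eigenvector for a generator $\sigma$ of $I$, say $\sigma(t)=\zeta t$ with $\zeta$ a primitive $e$-th root of unity, whence $\overline{\kappa(y)}[t]/(t^{e})=\bigoplus_{j=0}^{e-1}\overline{\kappa(y)}\,t^{j}$ realises each of the $e$ characters of $I$ exactly once, i.e. is the regular representation of $I$. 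Inducing up twice gives $\Gamma(X_y,\mathcal O_{X_y})\otimes_{\kappa(y)}\overline{\kappa(y)}\cong\text{Ind}_D^G(\text{Ind}_I^D\overline{\kappa(y)}[I])=\overline{\kappa(y)}[G]$. Hence $[\Gamma(X_y,\mathcal O_{X_y})]=[\kappa(y)[G]]$ in $CT(\kappa(y)[G])$ by Noether--Deuring, and restricting scalars along $A\hookrightarrow\kappa(y)$ we conclude $[\Gamma(X_y,\mathcal O_{X_y})]=[\kappa(y)[G]]=[\kappa(y)\otimes_A A[G]]=\deg(y)[A[G]]$ in $CT(A[G])$, since $\kappa(y)\otimes_A A[G]$ is a free $A[G]$-module of rank $\dim_A\kappa(y)=\deg(y)$.

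The only real obstacle I expect is this local computation at the ramified points, i.e. the fact that the fibre of a tame $G$-cover is the regular representation of the decomposition group after passing to an algebraically closed field; this is where tameness is genuinely used, through the eigenvector uniformiser and the prime-to-$p$ order of the inertia group, and one must be mildly careful with the residue-field extension when $\kappa(y)$ is not algebraically closed. Everything else — additivity of the Euler characteristic, flatness of $f$, and the reduction to points via $\text{Pic}(Y)$ — is formal. One could instead avoid the induction by choosing, after a twist of the additive character, a representative $\mathcal L\cong\mathcal O_Y(D_1)\otimes\mathcal O_Y(D_2)^{\vee}$ with $D_1,D_2$ effective and reduced, but the bookkeeping is essentially the same.
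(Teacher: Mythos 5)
Your proof is correct, but it follows a genuinely different path from the paper's. The paper's proof dodges the local analysis at ramified points entirely: it invokes Riemann--Roch to replace $\mathcal L$ by $\mathcal O_Y(D)$ for a divisor $D$ whose support is \emph{disjoint from the ramification locus} of $f$, then runs the same devissage by peeling off one closed point at a time. With that choice, every skyscraper $f^*(A_{\mathfrak p})$ that appears sits over an unramified $\mathfrak p$, so $H^0(Y,f_*f^*(A_{\mathfrak p}))\cong A[G]^{\deg(\mathfrak p)}$ is immediate — the fibre is a free $G$-set — and no inertia or tameness enters the Euler-characteristic computation at all. (Tameness is used elsewhere, to make $\chi(G,\cdot)$ well-defined in the first place via Chinburg's theorem; it is not used in this particular devissage.) You instead take the inductive step at an \emph{arbitrary} closed point, including ramified ones, and therefore have to establish the tame local normal basis statement: that $\Gamma(X_y,\mathcal O_{X_y})\otimes_{\kappa(y)}\overline{\kappa(y)}$ realises the regular representation of $G$, via the identification with $\mathrm{Ind}_D^G M$, the eigenvector uniformiser for the (prime-to-$p$, cyclic) inertia group, and Noether--Deuring to descend from $\overline{\kappa(y)}$ back to $\kappa(y)$ and then to $A$. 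That argument is sound — the hypothesis $\mathrm{char}(A)\nmid|G|$ makes $|I|$ and $|D/I|$ prime to $p$, the Cohen section is $D$-equivariantly available in the tame case, and the bookkeeping with residue degrees is right — but it is a strictly harder local computation than the paper needs. The trade-off: the paper's argument is shorter and purely formal once the ``moving lemma'' from Riemann--Roch is granted; yours is more robust (it does not depend on being able to move divisors off a finite set, so it would adapt to settings where that fails) and makes the role of tameness in the equivariant normal basis explicit. Both are valid; the paper's is what I would call the standard trick, and you should be aware of it, since reaching for the moving lemma would have saved you the entire last paragraph.
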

\begin{proof} Of course we are going to show the claim with the usual devisage argument. By the Riemann-Roch theorem there is a divisor $D$ on $Y$ whose support is disjoint from the ramification divisor of the cover $f$ and $\mathcal L=O_Y(D)$. First assume that $D$ is effective. When $\deg(D)=0$ the claim is obvious. Otherwise $D=D'+\mathfrak p$ where $D'$ is an effective divisor with $\deg(D')<\deg(D)$ and $\mathfrak p$ is a closed point on $Y$. There is a short exact sequence:
$$\CD0@>>> f^*(\mathcal O_Y(D'))@>>> f^*(\mathcal O_Y(D))
@>>> f^*(A{\mathfrak p})@>>> 0\endCD$$
where $A{\mathfrak p}$ denotes the skyskaper sheaf on $Y$ with support $\mathfrak p$. Because $\mathfrak p$ is not in the ramification locus of $f$ we have $H^0(f_*f^*(A{\mathfrak p}),Y)\cong A[G]^{\deg(\mathfrak p)}$ as $A[G]$-modules. Moreover all higher cohomology groups of the skyskaper sheaf $f_*f^*(A{\mathfrak p})$ vanish. Hence by the additivity of the Euler characteristic we get:
$$\chi(G,f^*(\mathcal O_Y(D)))=\chi(G,f^*(\mathcal O_Y(D')))+\deg(\mathfrak p)[A[G]].$$
Now the claim follows by induction on $\deg(D)$. Consider next the general case and write $D=D_1-D_2$ where $D_1$ and $D_2$ are divisors on $X$ whose supports are disjoint. We are going to prove the claim by induction on $\deg(D_2)$. We already proved the claim when $\deg(D_2)=0$. Otherwise $D_2=D'_2+\mathfrak p$ where $D'_2$ is an effective divisor with $\deg(D'_2)<\deg(D_2)$ and $\mathfrak p$ is a closed point on $Y$. By repeating the same argument which we used above we get:
$$\chi(G,f^*(\mathcal O_Y(D)))=\chi(G,f^*(\mathcal O_Y(D_1-D_2')))-\deg(\mathfrak p)[A[G]].$$
The claim is now clear.
\end{proof}
\begin{defn} Assume now that $A=\mathbf k$ is a perfect field of characteristic $p$ and let $W$ denote the ring of Witt vectors of $\mathbf k$ of infinite length. Moreover let $K$ denote the field of fractions of $W$. Let $M$ be a finitely generated cohomologically trivial $\mathbf k[G]$-mo\-dule. Then $M$ is a projective $\mathbf k[G]$-module by claim $(a)$ of Proposition 4.1 of [1] on page 557. Hence $M$ is isomorphic to $P/pP$ for some finitely generated projective $W$-module $P$. The character of the $\overline K[G]$-module $\overline K\otimes_WP$ can be written in the form $\sum m_{\alpha}\alpha$ where the sum is over the set $R(G)$ of irreducible $\overline K$-valued characters of $G$. The integer $m_{\alpha}$ is independent of the choice of $P$. Let $\Delta(M):R(G)\rightarrow\mathbb Z$ be the function defined by the formula $\Delta(M)(\alpha)=m_{\alpha}$. This map extends uniquely to a homomorphism from $CT(A[G])$ to the group of $\mathbb Z$-valued functions on $R(G)$ which will be denoted by $\Delta$ as well.
\end{defn}
Suppose that $A$ is a finite extension of the field $B$. Then the restriction of operators from $A[G]$ to $B[G]$ induces a homomorphism $\text{Res}_{A\rightarrow B}:CT(A[G])\rightarrow CT(B[G]))$. Assume now that $A$ is the finite field $\mathbb F_q$. Then every
$\alpha\in R(G)$ can be considered as a representation of the absolute Galois group of the function field of $X$. In particular the $\epsilon$-constant $\epsilon(\alpha)\in\overline{\mathbb Q}_p$ is defined. 
\begin{thm} With the same notation and assumptions as above we have:
$$\Delta(\text{\rm Res}_{\mathbb F_q\rightarrow\mathbb F_p}(\chi(G,\mathcal O_X)))
(\alpha)=-dv_q(\epsilon(\alpha^{-1}))\quad(\forall\alpha\in R(G)).$$
\end{thm}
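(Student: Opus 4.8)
The plan is to deduce the identity from Chinburg's theorem on the Galois module structure of the de Rham cohomology of tame covers — the results of [1] — using Grothendieck--Serre duality to isolate the contribution of the structure sheaf and Theorem 3.4 to reconcile the normalisations. Since $X$ is a smooth projective curve over $\mathbb F_q$ and $p\nmid|G|$, the de Rham complex $\Omega^{\bullet}_{X}=[\mathcal O_X\rightarrow\Omega^1_{X}]$ is a complex of coherent $G$-sheaves on $X$, and its equivariant hypercohomology Euler characteristic is $\chi_{dR}(G,X)=\chi(G,\mathcal O_X)-\chi(G,\Omega^1_X)$ in $CT(\mathbb F_q[G])$. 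As $|G|$ is prime to $p$, the duality isomorphism $H^i(X,\mathcal F)^{\vee}\cong H^{1-i}(X,\Omega^1_X\otimes\mathcal F^{\vee})$ is $\mathbb F_q[G]$-linear (the trace isomorphism $H^1(X,\Omega^1_X)\cong\mathbb F_q$ carries the trivial action, $X$ being geometrically connected), so with $\mathcal F=\Omega^1_X$ it gives $\chi(G,\Omega^1_X)=-\chi(G,\mathcal O_X)^{\vee}$ and hence $\chi_{dR}(G,X)=\chi(G,\mathcal O_X)+\chi(G,\mathcal O_X)^{\vee}$. Since $\Delta(M^{\vee})(\alpha)=\Delta(M)(\alpha^{-1})$, this reads $\Delta(\chi_{dR})(\alpha)=\Delta(\chi(G,\mathcal O_X))(\alpha)+\Delta(\chi(G,\mathcal O_X))(\alpha^{-1})$, so that $\chi(G,\mathcal O_X)$ records one of the two halves of the symmetric de Rham Euler characteristic; the content of the theorem is that it is the half indexed by $\alpha^{-1}$, which is consistent with the functional equation of Notation 3.1, by which $\epsilon(\alpha)\epsilon(\alpha^{-1})$ is a power of $q$ and the full de Rham term $\Delta(\chi_{dR})$ should equal $-d\big(v_q(\epsilon(\alpha))+v_q(\epsilon(\alpha^{-1}))\big)$.

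Next I would invoke [1]: applied to the tame $G$-cover $X\rightarrow Y=X/G$ of smooth projective curves over $\mathbb F_q$, the refined theorem of Chinburg computes the class of the de Rham complex, together with its Hodge filtration, in a relative $K$-group whose pertinent invariant is the collection of valuations $v_q(\epsilon(\alpha^{\pm1}))$ of the Grothendieck $\epsilon$-constants of Notation 3.1. Passing to the present statement requires two routine ingredients. First, one rewrites the local $\epsilon$-factors occurring in [1] in terms of the global $\epsilon$-constant $\epsilon(\alpha^{-1})$ via Theorem 3.4, noting that the auxiliary additive character $\tau$ and the Haar measures $\mu_x$ do not affect $v_q(\epsilon(\alpha^{-1}))$ because $\epsilon(\alpha^{-1})$ is independent of these choices. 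Second, one uses the elementary identity $\Delta(\text{Res}_{\mathbb F_q\rightarrow\mathbb F_p}(M))(\alpha)=\sum_{i=0}^{d-1}\Delta(M)(\alpha^{p^{i}})$ — Galois descent for the projective $W$-lifts of Definition 5.7, the left-hand $\Delta$ computed over $\mathbb F_p$ and the right-hand one over $\mathbb F_q$ — together with the fact that $v_q(\epsilon(\beta))=v_q(\epsilon(\beta^{p}))$, which holds since $\epsilon$-constants are Galois-equivariant and $v_q$ is preserved by $\text{Gal}(\overline{\mathbb Q}_p|\mathbb Q_p)$. The $d$ Frobenius-conjugate terms on the right then contribute equally, and matching the two sides at $\alpha$ and all its $p$-power conjugates at once yields $\Delta(\text{Res}_{\mathbb F_q\rightarrow\mathbb F_p}(\chi(G,\mathcal O_X)))(\alpha)=-dv_q(\epsilon(\alpha^{-1}))$.

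The main obstacle is the faithful bookkeeping of normalisations. Chinburg's theorem is phrased with its own conventions for local $\epsilon$-factors, signs, Haar measures and duals, and it delivers the full de Rham class; the genuinely delicate point is to verify that, after reduction modulo $p$ and restriction of scalars to $\mathbb F_p$, it is the Hodge graded piece $\mathcal O_X$ — and not $\Omega^1_X$ — that is accounted for by $\epsilon(\alpha^{-1})$, and that the sign and the inversion $\alpha\mapsto\alpha^{-1}$ come out exactly as stated. Everything else is a formal manipulation of the equivariant Euler characteristics of Section 5 together with Theorem 3.4 and the descent identity above. As a consistency check one can treat the trivial character: both sides equal $d(1-g_Y)$, the left-hand side because the $G$-invariants of $H^1(X,\mathcal O_X)$ are $H^1(Y,\mathcal O_Y)$, and the right-hand side because $\epsilon(1)=q^{g_Y-1}$ is forced by the functional equation of the zeta function of $Y$.
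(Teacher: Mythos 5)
The paper's proof of this theorem is a one-line citation: it is a special case of Theorem 5.2 of Chinburg~[1], which, for a tame $G$-cover of smooth projective curves over $\mathbb F_q$, directly computes $\Delta(\text{Res}_{\mathbb F_q\rightarrow\mathbb F_p}(\chi(G,\mathcal O_X)))(\alpha)$ in terms of $p$-adic valuations of global $\epsilon$-constants. Your proposal instead tries to reconstruct this from Chinburg's result about the \emph{full de Rham} Euler characteristic, and this detour introduces a gap that is not closed.

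Concretely: your Serre-duality manipulation yields $\chi_{dR}(G,X)=\chi(G,\mathcal O_X)+\chi(G,\mathcal O_X)^{\vee}$, hence
$$\Delta(\chi_{dR})(\alpha)=\Delta(\chi(G,\mathcal O_X))(\alpha)+\Delta(\chi(G,\mathcal O_X))(\alpha^{-1}),$$
which is a \emph{symmetric} expression in $\alpha\leftrightarrow\alpha^{-1}$. From this alone you cannot recover the individual summand $\Delta(\chi(G,\mathcal O_X))(\alpha^{-1})$; the functional equation of Notation~3.1 only constrains the sum $v_q(\epsilon(\alpha))+v_q(\epsilon(\alpha^{-1}))$, not either term separately, so "consistency" with the functional equation is not a proof. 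You flag this as "the genuinely delicate point", but then never resolve it; yet it \emph{is} the whole content of the theorem. If one has access to Chinburg's refined statement with the Hodge filtration taken into account (i.e.\ the Euler characteristic of each Hodge graded piece separately), then $\chi(G,\mathcal O_X)$ is one of those pieces and the result follows directly — but in that case the entire Serre-duality digression and the symmetric de Rham decomposition are superfluous, and the argument collapses into the paper's one-line citation. So either the approach is incomplete (if only the de Rham class is known) or unnecessarily roundabout (if the Hodge-refined version is invoked). The descent identity $\Delta(\text{Res}_{\mathbb F_q\rightarrow\mathbb F_p}(M))(\alpha)=\sum_{i}\Delta(M)(\alpha^{p^i})$ and the consistency check at the trivial character are fine, but they do not close the central gap.
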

\begin{proof} This is a special case of Theorem 5.2 of [1] on pages 463-464.
\end{proof}
\begin{notn} Suppose now that $Y=\mathcal C$ and let $\mathcal E$ and $\Delta_E$ be the same as in the introduction. Assume that the ramification divisor of the cover $f:X\rightarrow Y$ has support disjoint from the conductor of $E$. Let $g':\mathcal E'\rightarrow X$ be the base change of the elliptic fibration $g:\mathcal E\rightarrow Y$ with respect to the map $f$. Note that the $X$-scheme $\mathcal E'$ is a relatively minimal regular model of the base change of $E$ to the function field of $X$. Moreover $\mathcal E'$ is equipped with a unique action of $G$ fixing the zero section such that $g'$ is equivariant with respect to this action and the one on $X$.  
\end{notn}
\begin{thm} The $\mathbb F_q[G]$-module $H^2(\mathcal E',\mathcal O_{\mathcal E'})$ is cohomologically trivial and 
$$\Delta(\text{\rm Res}_{\mathbb F_q\rightarrow\mathbb F_p}([H^2(\mathcal E',\mathcal O_{\mathcal E'})])
(\alpha)=\frac{d}{12}\deg(\Delta_E)+dv_q(\epsilon(\alpha^{-1}))\ \ (\forall\alpha\in R(G)).$$
\end{thm}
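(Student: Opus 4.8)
The plan is to reduce $H^2(\mathcal{E}',\mathcal{O}_{\mathcal{E}'})$, by means of the Leray spectral sequence of the elliptic fibration $g'$, to the first cohomology of a line bundle on $X$ pulled back from $\mathcal{C}$, to compute the class of that cohomology group in $CT(\mathbb{F}_q[G])$ with the help of Lemma 5.6, and finally to apply $\Delta\circ\text{\rm Res}_{\mathbb{F}_q\rightarrow\mathbb{F}_p}$ and Theorem 5.8. First I would observe that by construction $\mathcal{E}'=\mathcal{E}\times_{\mathcal{C}}X$, with $g'$ the projection onto $X$; since $f$ is finite flat, flat base change then gives a $G$-equivariant isomorphism $R^1g'_*\mathcal{O}_{\mathcal{E}'}\cong f^*\mathcal{M}$, where $\mathcal{M}:=R^1g_*\mathcal{O}_{\mathcal{E}}$ is a line bundle on $\mathcal{C}$ with trivial $G$-action and $f^*\mathcal{M}$ carries the canonical linearisation of a pull-back along the $G$-equivariant map $f$; moreover $R^qg'_*\mathcal{O}_{\mathcal{E}'}=0$ for $q\geq2$ because $g'$ has relative dimension one. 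As $X$ is a curve, the only possibly nonzero terms on the $E_2$-page of the $G$-equivariant Leray spectral sequence $H^p(X,R^qg'_*\mathcal{O}_{\mathcal{E}'})\Rightarrow H^{p+q}(\mathcal{E}',\mathcal{O}_{\mathcal{E}'})$ lie in the range $0\leq p,q\leq1$; hence all differentials vanish and we obtain an isomorphism of $\mathbb{F}_q[G]$-modules $H^2(\mathcal{E}',\mathcal{O}_{\mathcal{E}'})\cong H^1(X,f^*\mathcal{M})$.

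Next I would use the standard fact that the fundamental line bundle $\mathcal{M}^{\vee}=(R^1g_*\mathcal{O}_{\mathcal{E}})^{\vee}$ of the relatively minimal elliptic surface $\mathcal{E}\rightarrow\mathcal{C}$ has degree $\frac{1}{12}\deg(\Delta_E)$, so that $\deg(\mathcal{M})=-\frac{1}{12}\deg(\Delta_E)$; since $E$ is non-isotrivial this is strictly negative, whence $\deg(f^*\mathcal{M})=|G|\deg(\mathcal{M})<0$ and therefore $H^0(X,f^*\mathcal{M})=0$. Now $f^*\mathcal{M}$ is a coherent $G$-sheaf on the tame $G$-cover $X$, so by Theorem 2.7 of [1] the stalks of $f_*(f^*\mathcal{M})$ are cohomologically trivial, the Euler characteristic $\chi(G,f^*\mathcal{M})\in CT(\mathbb{F}_q[G])$ is defined, and, by Definition 5.2, it is computed from a bounded complex $M^\bullet$ of finitely generated cohomologically trivial, hence (by Proposition 4.1 of [1]) projective, $\mathbb{F}_q[G]$-modules with $H^i(M^\bullet)\cong H^i(X,f^*\mathcal{M})$. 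Since these vanish for $i\neq1$, a truncation argument shows that $H^1(X,f^*\mathcal{M})$ has finite projective dimension over the self-injective algebra $\mathbb{F}_q[G]$, hence is projective, hence cohomologically trivial; this establishes the first assertion of the theorem and yields $\chi(G,f^*\mathcal{M})=-[H^1(X,f^*\mathcal{M})]=-[H^2(\mathcal{E}',\mathcal{O}_{\mathcal{E}'})]$ in $CT(\mathbb{F}_q[G])$. When $p$ does not divide $|G|$, which is the only case needed for the application of the theorem through Proposition 4.5, every finitely generated $\mathbb{F}_q[G]$-module is cohomologically trivial and one may quote Lemma 5.5 directly.

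Finally, Lemma 5.6 applied to the line bundle $\mathcal{M}$ on $\mathcal{C}=Y$ gives $\chi(G,f^*\mathcal{M})=\chi(G,\mathcal{O}_X)+\deg(\mathcal{M})[\mathbb{F}_q[G]]$ (its proof, which reduces to the case of skyscraper sheaves supported away from the ramification locus, does not actually require $p$ to be prime to $|G|$), so that $[H^2(\mathcal{E}',\mathcal{O}_{\mathcal{E}'})]=-\chi(G,\mathcal{O}_X)+\frac{1}{12}\deg(\Delta_E)[\mathbb{F}_q[G]]$. Applying $\Delta\circ\text{\rm Res}_{\mathbb{F}_q\rightarrow\mathbb{F}_p}$ and evaluating at $\alpha\in R(G)$, the summand $-\chi(G,\mathcal{O}_X)$ contributes $dv_q(\epsilon(\alpha^{-1}))$ by Theorem 5.8, while $\text{\rm Res}_{\mathbb{F}_q\rightarrow\mathbb{F}_p}([\mathbb{F}_q[G]])=d\cdot[\mathbb{F}_p[G]]$ and $\mathbb{F}_p[G]$ lifts to the group ring over the Witt vectors, whose extension of scalars to $\overline{K}$ is the regular representation; unwinding the definition of $\Delta$ then produces the factor $d$ in front of $\frac{1}{12}\deg(\Delta_E)$, giving exactly $\frac{d}{12}\deg(\Delta_E)+dv_q(\epsilon(\alpha^{-1}))$. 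The hard part will be the homological step just described, namely checking that $H^2(\mathcal{E}',\mathcal{O}_{\mathcal{E}'})$ is itself cohomologically trivial and equal, up to sign, to $\chi(G,f^*\mathcal{M})$, together with the bookkeeping needed to keep every isomorphism $G$-equivariant and to match the normalisation $\deg(\mathcal{M})=-\frac{1}{12}\deg(\Delta_E)$ with the definition of $\Delta_E$ fixed in the introduction.
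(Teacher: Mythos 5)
Your proposal is correct, and it follows a genuinely different path from the paper's proof while arriving at the same intermediate identity $[H^2(\mathcal E',\mathcal O_{\mathcal E'})]=\frac{1}{12}\deg(\Delta_E)[\mathbb F_q[G]]-\chi(G,\mathcal O_X)$ and the same final invocation of Theorem 5.8.

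The paper does not identify $H^2(\mathcal E',\mathcal O_{\mathcal E'})$ with $H^1(X,f^*\mathcal M)$ directly. Instead it computes the Euler characteristics $\chi(G,\mathcal O_{\mathcal E'})$ and $\chi(G,\mathcal O_X)$ via Lemma 5.5 (which is stated under $p\nmid|G|$), invokes Lemma 4 of Goldfeld--Szpiro [6] to show that the Picard functoriality map $\text{Pic}^0(X)\to\text{Pic}^0(\mathcal E')$ is an isomorphism — hence $H^0$ and $H^1$ of $\mathcal E'$ agree $G$-equivariantly with those of $X$ — and subtracts. You instead use the $G$-equivariant Leray spectral sequence once: since $X$ is a curve, $H^2(X,\mathcal O_X)=0$, and since $\deg(f^*\mathcal M)<0$, $H^0(X,f^*\mathcal M)=0$, so the only surviving $E_\infty$-term in degree two is $H^1(X,f^*\mathcal M)$. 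This short-circuits the appeal to [6] entirely (the vanishing of $H^0(X,f^*\mathcal M)$ plays the role of the Picard isomorphism). Your approach buys two things: it establishes the cohomological triviality of $H^2(\mathcal E',\mathcal O_{\mathcal E'})$ honestly — via the fact that a finitely generated $\mathbb F_q[G]$-module of finite projective dimension over the quasi-Frobenius ring $\mathbb F_q[G]$ is projective — rather than as an automatic consequence of $p\nmid|G|$, and it does not need Lemma 5.5 at all, replacing it with the observation that the class in $CT(\mathbb F_q[G])$ of a bounded complex of projectives concentrated in one cohomological degree is $(-1)^{\text{degree}}$ times that cohomology. The paper's route, by contrast, keeps everything at the level of Euler characteristics and so never has to verify that any single cohomology group is cohomologically trivial, at the cost of implicitly restricting to $p\nmid|G|$. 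As you note, the only case needed for the application (via Proposition 4.5) is $p\nmid|G|$ anyway, so the extra generality is decorative; but your writeup is, if anything, slightly more self-contained. Both proofs share the identification $R^1g'_*\mathcal O_{\mathcal E'}\cong\omega^{\otimes-1}_{\mathcal E'/X}\cong f^*\mathcal M$ (Grothendieck duality plus flat base change), the use of Lemma 5.6, and the evaluation $\Delta(\text{Res}_{\mathbb F_q\to\mathbb F_p}[\mathbb F_q[G]])(\alpha)=d$ for one-dimensional $\alpha$, so the remaining bookkeeping is identical.
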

\begin{proof} By Lemma 5.5 we have:
\begin{equation}
\chi(G,\mathcal O_{\mathcal E'})=[H^0(\mathcal E',\mathcal O_{\mathcal E'})]-
[H^1(\mathcal E',\mathcal O_{\mathcal E'})]+[H^2(\mathcal E',\mathcal O_{\mathcal E'})].
\end{equation}
By Lemma 4 of [6] on page 79 the map $(g')^*:\text{Pic}^0(X)\rightarrow\text{Pic}^0(\mathcal E')$ induced by Picard functoriality is an isomorphism. Because this map is equivariant with respect to the induced $G$-actions on $\text{Pic}^0(X)$ and $\text{Pic}^0(\mathcal E')$, we get that $H^1(\mathcal E',\mathcal O_{\mathcal E'})=H^1(X,\mathcal O_X)$ as $\mathbb F_q[G]$-modules, since these modules are isomorphic to the tangent spaces at the zero of the abelian varieties $\text{Pic}^0(\mathcal E')$ and $\text{Pic}^0(X)$, respectively. Obviously
$H^0(\mathcal E',\mathcal O_{\mathcal E'})=H^0(X,\mathcal O_X)$ as $\mathbb F_q[G]$-modules,
hence from (5.10.1) and Lemma 5.5 we get that
\begin{equation}
[H^2(\mathcal E',\mathcal O_{\mathcal E'})]=\chi(G,\mathcal O_{\mathcal E'})-
\chi(G,\mathcal O_X).
\end{equation}
Let $\Omega^1_{\mathcal E/Y}$, $\Omega^1_{\mathcal E'/X}$ denote the sheaf of relative K\"ahler differentials of the $Y$-scheme $\mathcal E$ and the $X$-scheme $\mathcal E'$, respectively. Let $\omega_{\mathcal E/Y}$, $\omega_{\mathcal E'/X}$ denote respectively the pull-back of $\Omega^1_{\mathcal E/Y}$ and $\Omega^1_{\mathcal E'/X}$ with respect to the zero section. These sheaves are line bundles on $Y$ and $X$, respectively. Moreover by Grothendieck's duality we have $R^1g'_*(\mathcal O_{\mathcal E'})=\omega^{\otimes-1}_{\mathcal E'/X}$. In particular $\chi(G,R^1g'_*(\mathcal O_{\mathcal E'}))=-\chi(G,\omega_{\mathcal E'/X})$. Because all boundary maps in the spectral sequence  $H^p(Y,R^qg'_*(\mathcal O_{\mathcal E'}))
\Rightarrow H^{p+q}(\mathcal E',\mathcal O_{\mathcal E'})$ are $\mathbb F_q[G]$-linear we get from the above and Lemma 5.5 that
\begin{equation}
\chi(G,\mathcal O_{\mathcal E'})=\chi(G,\mathcal O_X)-
\chi(G,R^1g'_*(\mathcal O_{\mathcal E'}))=\chi(G,\mathcal O_X)-
\chi(G,\omega^{\otimes-1}_{\mathcal E'/X}).
\end{equation}
Combining (5.10.2) and (5.10.3) we get that
\begin{equation}
[H^2(\mathcal E',\mathcal O_{\mathcal E'})]=-\chi(G,\omega^{\otimes-1}_{\mathcal E'/X}).
\end{equation}
By definition $\Delta_E$ is the zero divisor of a non-zero section of $\omega_{\mathcal E/Y}^{ \otimes12}$. Therefore $\deg(\Delta_E)=12\deg(\omega_{\mathcal E/Y})$. Moreover $\omega_{\mathcal E'/X}=f^*(\omega_{\mathcal E/Y})$. Hence (5.10.4) and Lemma 5.6 implies that
$$[H^2(\mathcal E,\mathcal O_{\mathcal E})]=
\frac{\deg(\Delta_E)}{12}[\mathbb F_q[G]]-\chi(G,\mathcal O_X).$$
The claim now follows from Theorem 5.8.
\end{proof}

\section{Slope estimates}

\begin{defn} Let $\sigma:W\rightarrow W$ denote the absolute Frobenius automorphism. Let $W((V))$ denote the $W$-algebra of formal Laurent series $\sum_{i\geq n}a_iV^i$, where $a_i\in W$ and $n\in\mathbb Z$ are arbitrary, with the usual addition and with multiplication defined by the formula:
$$\big(\sum_{i\geq n}a_iV^i\big)\cdot\big(\sum_{j\geq m}b_jV^j\big)
=\sum_{k\geq n+m}(\sum_{i+j=k}a_i\sigma^{-i}(b_j))V^k.$$
Moreover let $W[V]$ and $W[[V]]$ denote the subring of $W((V))$ consisting of polynomials and formal power series in the variable $V$, respectively. Let $M$ be a module over the ring $W[V]$. Then the kernel and the cokernel of the multiplication by $V:M\rightarrow M$ are $W$-modules and we define
$$\chi(M)=\text{length}_W(\text{Ker}(V))-
\text{length}_W(\text{Coker}(V))$$
provided both numbers on the right are finite. Let $W[V,F]$ denote ring generated by the variable $V$ over $W$ subject to the relations $VF=FV=p$, $Fc=\sigma(c)F$ and $Vc=\sigma^{-1}(c)V$ for every $c\in W$. For every module $M$ over the Dieudonn\'e ring $W[F,V]$ which is free and finitely generated as a $W$-module the tensor product $M\otimes_WK$ is an $F$-isocrystal over $W$ with respect to multiplication by $F\otimes_W\text{id}_K$. For every $F$-isocrystal $M$ over $W$ let $l(M)$ denote $\sum m_i(1-\lambda_i)$ where the $\lambda_i$ are the slopes of $M$ and $m_i$ is the multiplicity of $\lambda_i$.
\end{defn}
\begin{lemma} Then the following hold:
\begin{enumerate}
\item[$(i)$] if $0\rightarrow M'\rightarrow M\rightarrow M\rightarrow0$ is a short exact sequence of $W[V]$-modules, then $\chi(M)$ is defined if $\chi(M')$ and $\chi(M'')$ are defined and
$$\chi(M)=\chi(M')+\chi(M''),$$
\item[$(ii)$] if $M$ is a module over the ring $W[F,V]$ which is free and finitely generated as a $W$-module, then $\chi(M)=-l(M\otimes_WK)$,
\item[$(iii)$] if $M$ is a finitely generated torsion $W[[V]]$-module then 
$$\chi(M)=-\text{\rm length}_{W((V))}M\otimes_{W[[V]]}W((V)).$$
\end{enumerate}
\end{lemma}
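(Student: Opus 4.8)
The plan is to establish each of the three statements by an appropriate reduction to a normal form or a devissage. For part $(i)$, I would observe that multiplication by $V$ is a $W$-linear endomorphism of each of the three modules, and that the short exact sequence $0\to M'\to M\to M''\to 0$ of $W[V]$-modules yields a commutative diagram with exact rows in which the vertical arrows are multiplication by $V$. Applying the snake lemma produces a six-term exact sequence $0\to\mathrm{Ker}(V|_{M'})\to\mathrm{Ker}(V|_M)\to\mathrm{Ker}(V|_{M''})\to\mathrm{Coker}(V|_{M'})\to\mathrm{Coker}(V|_M)\to\mathrm{Coker}(V|_{M''})\to 0$. If the four outer $W$-modules have finite length, then so do the middle two (a submodule and a quotient of finite-length modules, once one checks the sequence forces finiteness), and the alternating sum of lengths along an exact sequence of finite-length $W$-modules vanishes; rearranging this vanishing gives $\chi(M)=\chi(M')+\chi(M'')$.

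For part $(ii)$, the strategy is to pass to the isocrystal $N=M\otimes_W K$ and use the slope (Dieudonn\'e--Manin) decomposition $N=\bigoplus_i N_{\lambda_i}$ into isoclinic pieces. Since $\chi$ is additive (in the $K$-scaled sense) and $l$ is additive over such a decomposition, it suffices to treat a single isoclinic isocrystal of slope $\lambda=r/s$ in lowest terms. Here one chooses a $W$-lattice stable under $F$ and $V$ and computes the cokernel of $V$ directly: on the standard isoclinic module one has $V$ acting with $\mathrm{length}_W\mathrm{Coker}(V)=s-r$ times the multiplicity and $\mathrm{Ker}(V)=0$ (since $V$ is injective on a $W$-free module admitting an injective $F$), so $\chi(M)=-(s-r)\cdot(\text{mult})/s\cdot s = -m_i(1-\lambda_i)$ summed up, i.e. $\chi(M)=-l(N)$. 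Care is needed because $\chi$ as defined uses the integral lattice $M$, not $N$; but changing the lattice inside a fixed isocrystal changes both $\mathrm{length}_W\mathrm{Ker}(V)$ and $\mathrm{length}_W\mathrm{Coker}(V)$ by the same amount (the lattice index appears symmetrically since $V$ is an isomorphism after inverting $p$), so $\chi(M)$ is a lattice-independent invariant of $N$, and one may compute with the convenient lattice.

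For part $(iii)$, I would use the structure theory of finitely generated torsion modules over $W[[V]]$. Here $W[[V]]$ is a (non-commutative, but sufficiently well-behaved) local ring in which $V$ is a central-up-to-$\sigma$ non-unit; a finitely generated torsion module $M$ is $V$-power torsion after inverting $p$, and one reduces to cyclic modules of the form $W[[V]]/(f)$ for suitable $f$, using additivity from part $(i)$ along a filtration. For such a cyclic module, $\mathrm{Ker}(V)$ and $\mathrm{Coker}(V)$ are both finite-length $W$-modules whose lengths one reads off, and $M\otimes_{W[[V]]}W((V))$ has finite $W((V))$-length equal to that same difference; equating gives the formula. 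The main obstacle I anticipate is the non-commutativity of $W[[V]]$ and $W[F,V]$: the usual module-theoretic tools (snake lemma, structure of finitely generated modules over a PID, slope decomposition) must be invoked in their correct one-sided or twisted forms, and in part $(ii)$ one must argue carefully that $\chi$ genuinely depends only on the isocrystal and not on the chosen Dieudonn\'e lattice, which is the crux of matching the integral quantity $\chi(M)$ to the rational invariant $l(M\otimes_W K)$.
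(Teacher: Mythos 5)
The paper does not prove this lemma: it simply cites Lemma~7.2 of Milne's paper [17] (``On a conjecture of Artin and Tate''), so there is no in-paper argument to compare your sketch against. Your outline is a plausible reconstruction of the standard proof, and the overall strategy---snake lemma for $(i)$, reduction to the isoclinic case plus lattice-independence for $(ii)$, devissage for $(iii)$---is the right one. That said, a few points deserve more care than your sketch gives them.

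In $(i)$, multiplication by $V$ is not $W$-linear but $\sigma^{-1}$-semilinear ($Vc=\sigma^{-1}(c)V$); it should be regarded as a $W$-linear map $M\to\sigma^{-1*}M$. The snake lemma and the length computation go through unchanged because $\sigma$ is an automorphism of $W$, but the statement as written is imprecise. In $(ii)$, the invariance of $\chi$ under change of stable lattice can be said more cleanly: for any $W[F,V]$-stable lattice $L$ in a Dieudonn\'e module, $V$ is injective (it divides multiplication by $p$ on a free $W$-module), so $\mathrm{Ker}(V)=0$ and only $\mathrm{length}_W(L/VL)$ matters; for two stable lattices $L'\subseteq L$ one has $\mathrm{length}(L/L')=\mathrm{length}(VL/VL')$, whence $\mathrm{length}(L/VL)=\mathrm{length}(L'/VL')$. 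Your bookkeeping with ``multiplicity'' in the isoclinic computation is garbled---the correct statement is that the basic module of slope $r/s$ and $W$-rank $s$ has $\mathrm{length}_W\mathrm{Coker}(V)=s-r=s(1-\lambda)$, which matches $l$ since the multiplicity $m_i$ in the paper's definition is the $K$-dimension of the isoclinic piece. Part $(iii)$ is where the sketch is thinnest. The word ``torsion'' must be read (as the application in the proof of Proposition~6.4 makes clear) as $p$-power torsion over $W$; otherwise modules such as $W[[V]]/(V^n)$ would be allowed, for which $\mathrm{Ker}(V)\cong W$ has infinite $W$-length and $\chi$ is undefined. With the $p$-power torsion reading, the cleanest devissage is along the $p$-adic filtration $M\supseteq pM\supseteq\cdots\supseteq p^NM=0$ (which terminates because $p$ is central and $M$ is finitely generated), reducing via $(i)$ and flatness of $W((V))$ over $W[[V]]$ to finitely generated modules over the twisted power series ring $\mathbf k[[V;\sigma^{-1}]]$; the latter is a (noncommutative) discrete valuation ring, so such a module is a direct sum of copies of $\mathbf k[[V]]$ (contributing $\chi=-1$ and length~$1$ after inverting $V$) and of $\mathbf k[[V]]/(V^{n})$ (contributing $\chi=0$ and length~$0$). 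Appealing vaguely to ``cyclic modules $W[[V]]/(f)$'' skips over exactly the noncommutativity issue you flag, so you should either carry out this $p$-adic reduction explicitly or cite Milne for the structure theory.
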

\begin{proof} This is Lemma 7.2 of [17] on page 530.
\end{proof}
\begin{defn} Let $G$ be a finite abelian group whose order is not divisible by $p$ and assume that $\mathbf k$ is algebraically closed. Then every $\alpha\in R(G)$ takes values in $W$ hence the element $\pi_{\alpha}=\sum_{g\in G}\alpha(g)^{-1}g\in W[G]$ is well-defined. For every $W[G]$-module $M$ let $M^{\alpha}$ denote the sub $W[G]$-module $\pi_{\alpha}M$. Finally for every $F$-isocrystal $M$ over $W$ and real numbers $a\leq b$ let $M_{[a,b)}$ denote the maximal subquotient of $M$ with slopes in the interval $[a,b)$.
\end{defn}
Let $X$ be a smooth projective variety defined over $\mathbf k$ and assume that $G$ acts on $X$. Let $H^m(X/W)$ denote the $m$-th crystalline cohomology group of $W$. Then $W[G]$ acts on $H^m(X/W)$ and $H^m(X,\mathcal O_X)$ by functoriality for every $m$.  
\begin{prop} We have:
$$l(H^r(X/W)^{\alpha}\otimes_WK_{[0,1)})
\leq\dim_{\mathbf k}(H^r(X,\mathcal O_X)^{\alpha})$$
for every $r\in\mathbb N$ and $\alpha\in R(G)$. 
\end{prop}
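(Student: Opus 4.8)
The plan is to transfer the inequality to the de Rham--Witt complex and then squeeze the Euler characteristic $\chi$ of Definition 6.1 between two bounds. Put $N=H^r(X,W\mathcal O_X)$, the cohomology of the sheaf of Witt vectors; this is a module over $W[F,V]$, and since $W\mathcal O_X$ is functorial in $X$ the group $G$ acts on it compatibly with $F$ and $V$. By Illusie's theory of the de Rham--Witt complex the edge map of the slope spectral sequence becomes, after $\otimes_W K$, an injection of $N\otimes_W K$ onto the part of $H^r(X/W)\otimes_W K$ with slopes in $[0,1)$; applying the idempotent attached to $\alpha$ — legitimate because $p\nmid|G|$ — gives $N^\alpha\otimes_W K\cong\bigl(H^r(X/W)^\alpha\otimes_W K\bigr)_{[0,1)}$, so that the left-hand side of the proposition equals $l(N^\alpha\otimes_W K)$. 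It therefore suffices to prove $l(N^\alpha\otimes_W K)\le\dim_{\mathbf k}H^r(X,\mathcal O_X)^\alpha$.

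For the first estimate I would use the fundamental exact sequence $0\to W\mathcal O_X\xrightarrow{V}W\mathcal O_X\to\mathcal O_X\to0$. Taking cohomology and passing to $\alpha$-isotypic parts (an exact operation) yields an exact sequence $H^{r-1}(X,\mathcal O_X)^\alpha\to N^\alpha\xrightarrow{V}N^\alpha\to H^r(X,\mathcal O_X)^\alpha$, so that $\mathrm{Ker}(V\colon N^\alpha\to N^\alpha)$ is a quotient of the finite-dimensional space $H^{r-1}(X,\mathcal O_X)^\alpha$ and $\mathrm{Coker}(V)$ is a submodule of $H^r(X,\mathcal O_X)^\alpha$. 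Hence $\chi(N^\alpha)$ is defined, and
$$\chi(N^\alpha)=\mathrm{length}_W\mathrm{Ker}(V)-\mathrm{length}_W\mathrm{Coker}(V)\ge-\dim_{\mathbf k}H^r(X,\mathcal O_X)^\alpha.$$

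For the second estimate I would invoke the structure theory of finite-type de Rham--Witt cohomology modules from [17]: up to isogeny $N^\alpha\cong N'\oplus N''$, with $N'$ finitely generated over $W$ (a $W[F,V]$-module) and $N''$ a finitely generated torsion $W[[V]]$-module which is $p$-power torsion, so that $N''\otimes_W K=0$ and $N'\otimes_W K\cong N^\alpha\otimes_W K$. Passing to an isogenous module changes neither $\chi$ (on a finite-length $W$-module the kernel and cokernel of multiplication by $V$ have the same length) nor the slopes of the associated isocrystal, so by Lemma 6.2(i), and after splitting off the finite-length torsion submodule of $N'$, Lemma 6.2(ii) gives $\chi(N')=-l(N^\alpha\otimes_W K)$ while Lemma 6.2(iii) gives $\chi(N'')=-\mathrm{length}_{W((V))}\bigl(N''\otimes_{W[[V]]}W((V))\bigr)\le0$. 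Therefore $\chi(N^\alpha)=\chi(N')+\chi(N'')\le-l(N^\alpha\otimes_W K)$. Combining the two estimates gives $l(N^\alpha\otimes_W K)\le\dim_{\mathbf k}H^r(X,\mathcal O_X)^\alpha$, as required.

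The point that needs care — and which I expect to be the main obstacle — is the interaction of the $G$-action with the $\sigma$-semilinear operators $F$ and $V$: these do not commute with $\pi_\alpha$ but conjugate $\alpha$ by the Frobenius of $W(\mathbf k)$, so to make $V$ act on $N^\alpha$ one should really work with a suitable power $V^f$ and the exact sequence $0\to W\mathcal O_X\xrightarrow{V^f}W\mathcal O_X\to W_f\mathcal O_X\to0$ (equivalently, run the argument on the sum of the Frobenius-conjugate isotypic pieces and then redistribute). When $\mu_{|G|}\subseteq\mathbb F_p$, for instance in the situation of Theorem 1.4, the orbit is a single character and this complication disappears. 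Beyond that bookkeeping, and pinning down the precise references for the degeneration of the slope spectral sequence and for the decomposition of $N^\alpha$, the substance of the argument is the two-sided bound on $\chi(N^\alpha)$ above.
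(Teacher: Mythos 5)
Your argument follows the same route as the paper's proof of Proposition 6.4: one bounds $\chi(H^r(X,W\mathcal O_X)^{\alpha})$ from below by $-\dim_{\mathbf k}H^r(X,\mathcal O_X)^{\alpha}$ using the long exact sequence associated to $0\to W\mathcal O_X\xrightarrow{V}W\mathcal O_X\to\mathcal O_X\to 0$, and from above by $-l(H^r(X/W)^{\alpha}\otimes_W K_{[0,1)})$ via Lemma 6.2 applied to the $W$-torsion filtration on Witt vector cohomology together with the degeneration modulo torsion of the slope spectral sequence. The only cosmetic difference is that the paper uses the short exact sequence $0\to H^r(X,W)^{\alpha}_t\to H^r(X,W)^{\alpha}\to H^r(X,W)^{\alpha}_{ct}\to 0$ directly, in place of your ``direct sum up to isogeny''; the content is identical.

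Your closing caveat is, however, a genuine issue that the paper's proof glosses over, not merely bookkeeping. Since $V$ is $\sigma^{-1}$-semilinear and the values $\alpha(g)$ are Teichm\"uller lifts of roots of unity of order prime to $p$, one has $V(\pi_{\alpha}m)=\pi_{\sigma^{-1}\circ\alpha}V(m)$, so $V$ carries $M^{\alpha}$ into $M^{\sigma^{-1}\circ\alpha}$, which is a \emph{different} isotypic component unless the order of $\alpha$ divides $p-1$; likewise $F$ shifts the component the other way. Consequently the displayed long exact sequence with $V:H^r(X,W)^{\alpha}\to H^r(X,W)^{\alpha}$ does not literally exist as written, and $H^r(X,W)^{\alpha}_{ct}$ is not in fact a $W[F,V]$-module. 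Your proposed repair --- replacing $V$ by $V^f$ with $W_f\mathcal O_X$, or equivalently running the argument on $\bigoplus_{\beta}M^{\beta}$ over the $\sigma$-orbit of $\alpha$, which \emph{is} $F$- and $V$-stable --- is the right one and yields the orbit-summed version of the inequality. That weaker form is all that is actually used: in the proof of Theorem 6.9 the paper invokes Proposition 6.4 only after summing over the set $C(\alpha)$ of conjugates of $\alpha$ under $\text{Gal}(\overline{\mathbb Q}_p|\mathbb Q}_p)$, and since $\alpha$ has order prime to $p$ that Galois action is generated by $x\mapsto x^p$, so $C(\alpha)$ coincides exactly with the $\sigma$-orbit. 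You were right to flag this as the point needing care; it deserves to be spelled out rather than left implicit.
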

\begin{proof} Let $H^r(X,W)$ denote the $r$-th Witt vector cohomology group of the variety $X$. The latter is a $W[V]$-module and there is a long exact sequence:
$$\cdots\rightarrow H^{r-1}(X,\mathcal O_X)\rightarrow H^r(X,W)
\rightarrow^{\!\!\!\!\!\!V}
H^r(X,W)\rightarrow H^r(X,\mathcal O_X)\rightarrow\cdots$$
The group $G$ acts on all cohomology groups in the sequence above and the maps are equivariant with respect to this action so there is a long exact sequence:
$$\cdots\rightarrow H^{r-1}(X,\mathcal O_X)^{\alpha}\rightarrow H^r(X,W)^{\alpha}
\rightarrow^{\!\!\!\!\!\!V}
H^r(X,W)^{\alpha}\rightarrow H^r(X,\mathcal O_X)^{\alpha}\rightarrow\cdots$$
Because $X$ is projective the vector spaces $H^r(X,\mathcal O_X)^{\alpha}$ have finite dimension. Therefore $\chi(H^r(X,W)^{\alpha})$ is well-defined and
\begin{equation}
\chi(H^r(X,W)^{\alpha})\geq-
\dim_{\mathbf k}(H^r(X,\mathcal O_X)^{\alpha}).
\end{equation}
Write
$$H^r(X,W)^{\alpha}_t=\text{Ker}(H^r(X,W)^{\alpha}
\rightarrow H^r(X,W)^{\alpha}\otimes_WK).$$
Then $H^r(X,W)^{\alpha}_t$ is a torsion $W[[V]]$-module and there is a short exact sequence of $W[V]$-modules:
$$0\rightarrow H^r(X,W)^{\alpha}_t\rightarrow H^r(X,W)^{\alpha}\rightarrow H^r(X,W)^{\alpha}_{ct}\rightarrow0$$
such that $H^r(X,W)^{\alpha}_{ct}$ is a module over the Dieudonn\'e ring $W[F,V]$ which is free and finitely generated as a $W$-module. Therefore by part $(i)$ of Lemma 6.2 we have:
\begin{equation}
\chi(H^r(X,W)^{\alpha})=\chi(H^r(X,W)_t^{\alpha})+\chi(H^r(X,W)_{ct}^{\alpha}).
\end{equation}
Because the slope spectral sequence degenerates modulo torsion (Th\'eor\`eme 3.2 of [8] on page 615) there is an isomorphism between the $F$-isocrystals $H^r(X,W)\otimes_WK$ and $H^r(X/W)\otimes K_{[0,1)}$. Since this isomorphism is equivariant with respect to the action of $G$ we get that the $F$-isocrystals $H^r(X,W)^{\alpha}\otimes_WK$ and $H^r(X/W)^{\alpha}\otimes_WK_{[0,1)}$ are also isomorphic. Hence
\begin{equation}
\chi(H^r(X,W)_{ct}^{\alpha})=
-l(H^r(X,W)_{ct}^{\alpha}\otimes_WK)=
-l(H^r(X/W)^{\alpha}\otimes_WK_{[0,1)}).
\end{equation}
by part $(ii)$ of Lemma 6.2. According to  part $(iii)$ of Lemma 6.2 the number $\chi(H^r(X,W)_t^{\alpha})$ is not positive. Hence (6.4.1), (6.4.2) and (6.4.3) imply that
$$-l(H^r(X/W)^{\alpha}\otimes_WK_{[0,1)})\geq
-\dim_{\mathbf k}(H^r(X,\mathcal O_X)^{\alpha})\text{.\qedhere}$$
\end{proof}
\begin{notn} Assume now that $\mathbf k$ is the algebraic closure of $\mathbb F_q$ and for every Spec$(\mathbb F_q)$-scheme $S$ let $\overline S$ denote its base change to Spec$(\mathbf k)$. Moreover let $F:\overline S\rightarrow\overline S$ denote the Frobenius relative to $\mathbb F_q$ for every such $S$. Assume now that $X$ is a smooth projective variety defined over $\mathbf k$ equipped with an action of $G$. Recall that we've chosen a prime $l\neq p$. Now fix an isomorphism $\nu:\overline{\mathbb Q}_l\rightarrow\overline{\mathbb Q}_p$. Since group $G$ acts on the \'etale cohomology group $H^m(\overline X,\overline{\mathbb Q}_l)$ we may consider the latter as a $\mathbb W[G]$-module if we identify $\overline{\mathbb Q}_l$ and $\overline{\mathbb Q}_p$ via $\nu$. Then the map $F^*:H^m(\overline X,\overline{\mathbb Q}_l)\rightarrow H^m(\overline X,\overline{\mathbb Q}_l)$ induced by the Frobenius morphism $F$ commutes with the action of $G$ so $H^m(\overline X,\overline{\mathbb Q}_l)^{\alpha}$ is an $F^*$-invariant subspace. 
\end{notn}
\begin{lemma} We have:
$$l_q(\det(1-F^*t|H^m(\overline X,\overline{\mathbb Q}_l)^{\alpha}))=
l(H^m(\overline X/W)^{\alpha}\otimes_WK_{[0,1)}).$$
\end{lemma}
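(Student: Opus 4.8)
\emph{Proof strategy.} The plan is to reduce the identity to the $G$-equivariant comparison of $\ell$-adic and crystalline Frobenius characteristic polynomials, after which both $l_q$ and $l$ become the \emph{same} combinatorial expression in the $v_q$-valuations of the Frobenius eigenvalues on the $\alpha$-isotypic part. Set $P_{\alpha}(t):=\det(1-F^*t\,|\,H^m(\overline X,\overline{\mathbb Q}_l)^{\alpha})$, viewed in $\overline{\mathbb Q}_p[t]$ via $\nu$. First observe that $P_{\alpha}(t)$ has constant term $1$, so in the notation of Notation 4.1 its leading data are $a=1$, $k=0$, its reciprocal roots are exactly the eigenvalues $\lambda_i$ of $F^*$ on $H^m(\overline X,\overline{\mathbb Q}_l)^{\alpha}$, and since the terms with $v_q(\lambda_i)=1$ contribute $0$ to $l_q$ we get $l_q(P_{\alpha}(t))=\sum_{v_q(\lambda_i)<1}(1-v_q(\lambda_i))$. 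On the other side, by the Dieudonn\'e--Manin classification the $F$-isocrystal $H^m(\overline X/W)^{\alpha}\otimes_WK$ decomposes into isoclinic pieces, and on an isoclinic piece of slope $\lambda$ the $q$-power crystalline Frobenius has ``eigenvalues'' all of $v_q$-valuation $\lambda$; hence the slopes of $H^m(\overline X/W)^{\alpha}$, counted with multiplicity, are precisely the $v_q$-valuations of those eigenvalues, and $l(H^m(\overline X/W)^{\alpha}\otimes_WK_{[0,1)})=\sum_{\lambda<1}(\text{mult}_{\lambda})(1-\lambda)$.

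It therefore suffices to show that, after identifying $\overline{\mathbb Q}_l$ with $\overline{\mathbb Q}_p$ via $\nu$, the reciprocal characteristic polynomial of $F^*$ on $H^m(\overline X,\overline{\mathbb Q}_l)^{\alpha}$ agrees with the reciprocal characteristic polynomial of the $q$-power crystalline Frobenius on $H^m(\overline X/W)^{\alpha}\otimes_WK$. This is the $G$-equivariant form of the Katz--Messing comparison theorem. Here $\overline X$ is the base change to $\mathbf k$ of a smooth projective $\mathbb F_q$-variety carrying the $G$-action, so the $q$-power Frobenius and each $g\in G$ act as algebraic correspondences on $\overline X$, and cycle-class compatibility makes their actions on $\ell$-adic and on crystalline cohomology correspond. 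Applying the Lefschetz trace formula in both theories to the correspondences $g\circ\mathrm{Frob}^{n}$ for all $g\in G$ and all $n\geq1$ gives identical numbers, so the Frobenius-equivariant ``virtual'' characteristic polynomials coincide; purity (the Weil conjectures) then separates cohomological degrees, and applying the idempotent $\pi_{\alpha}$ --- which is defined both over $W$ and over $\overline{\mathbb Q}_p$ and cuts out a direct summand in each cohomology theory because $|G|$ is prime to $p$ --- yields the claimed equality on the $\alpha$-isotypic parts.

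Combining the two steps, the reciprocal roots $\lambda_i$ of $P_{\alpha}(t)$ are exactly the eigenvalues of the $q$-power crystalline Frobenius on $H^m(\overline X/W)^{\alpha}\otimes_WK$, so their $v_q$-valuations are the slopes of that $F$-isocrystal with multiplicity; truncating to the range $[0,1)$ on both sides and summing $(1-\cdot)$ gives the identity of the lemma. The one place where more than bookkeeping enters is the $G$-equivariant Katz--Messing comparison --- chiefly, verifying that the crystalline Lefschetz trace formula applies to the correspondences $g\circ\mathrm{Frob}^{n}$ and interacts with cycle classes exactly as in the $\ell$-adic case; granting that, the passage between $l_q$ and $l$ is purely formal.
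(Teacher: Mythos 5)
Your proposal is correct and follows essentially the same route as the paper: apply the Lefschetz trace formula to the correspondences $g\circ F^{[n]}$ in both the $\ell$-adic and crystalline theories, then use purity (Katz--Messing) to rule out cancellation between cohomological degrees, and use the idempotent $\pi_\alpha$ (orthogonality of characters) to isolate the $\alpha$-isotypic part, so that the two characteristic polynomials agree and the passage from $l_q$ to $l$ is bookkeeping. The one detail the paper makes explicit that you leave as a ``granting that'' is the observation that $g\circ F^{[n]}$ has zero differential, which is what lets the Lefschetz formula produce the same fixed-point count in both theories; that is the crux of the comparison and is worth stating rather than deferring.
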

\begin{proof} For every $g\in G$ let the same symbol denote the base
change $\overline X\rightarrow\overline X$ to Speck$(\mathbf k)$ of the automorphism $X\rightarrow X$ furnished by the given action of $G$ on $X$. For every positive integer $n$ the composition
$$g\circ F^{[n]}=g\circ\underbrace{F\circ\cdots\circ F}_{\text{$n$-times}}$$
induces the zero map on tangent spaces, hence the Lefschetz trace formula applied to $g\circ F^{[n]}$ both in the $l$-adic and the crystalline cohomology theories imply together that
\begin{eqnarray}
\sum_{m=0}^{2\dim(\overline X)}(-1)^m
\text{Tr}(1-\!\!\!\!\!\!\!\!&&\!\!\!\!g^*
(F^*)^n|H^m(\overline X,\overline{\mathbb Q}_l))=\\
\!\!\!\!\!\!\!\!&&\!\!\!\!\sum_{m=0}^{2\dim(\overline X)}
(-1)^m
\text{Tr}(1-g^*(F^*)^n|H^m(\overline X/W)\otimes_WK).\nonumber
\end{eqnarray}
Let $l_m(\alpha,t)$ and $c_m(\alpha,t)$ denote the polynomials:
$$\det(1-F^*t|H^m(\overline X,\overline{\mathbb Q}_l)^{\alpha})
\text{ and }
\det(1-F^*t|H^m(\overline X/W)\otimes_WK^{\alpha}),$$
respectively. By the orthogonality of characters the equation (6.6.1) implies that
\begin{equation}
\prod_{m=0}^{2\dim(\overline X)}l_m(\alpha,t)^{(-1)^m}=
\prod_{m=0}^{2\dim(\overline X)}c_m(\alpha,t)^{(-1)^m}.
\end{equation}
By Deligne's purity theorem and Theorem 1 of [11] on page 74 the reciprocal roots of the polynomials $l_m(\alpha,t)$ and $c_m(\alpha,t)$ are Weil numbers with squared complex norm $q^m$. Hence there are no cancellations in the alternating products in (6.6.2) above. Therefore $l_m(\alpha,t)=c_m(\alpha,t)$ for every $m$. Let $\lambda_1,\lambda_2,\ldots,\lambda_k$ be the eigenvalues of $F^*$ considered as a linear transformation of $H^m(\overline X/W)\otimes_WK^{\alpha}$. The claim now follows from the fact that $v_q(\lambda_1),v_q(\lambda_2),\ldots,v_q(\lambda_k)$ are the slopes of the $F$-isocrystal
$H^m(\overline X/W)\otimes_WK^{\alpha}$.
\end{proof}
\begin{notn} Let us consider now the same situation as in the introduction. Fix a tamely ramified character $\alpha\in\mathbb X(\mathfrak n)$ and let $\pi:X\rightarrow\mathcal C$ be the Galois cover corresponding to the extension $F'|F$ where $F'$ is the subfield of $\overline F$ fixed by the kernel of $\alpha$. Let $G$ denote the Galois group of the cover $\pi$ and let $g':\mathcal E'\rightarrow X$ denote the base change of $g:\mathcal E\rightarrow\mathcal C$ with respect to the map $f$ as in Notation 5.9. We will also keep on using the notation introduced in 6.5.
\end{notn}
\begin{lemma} We have:
$$l_q(L(\sigma_E\otimes\alpha,t))=
l(H^2(\overline{\mathcal E}'/W)^{\alpha}\otimes_WK_{[0,1)}).$$
\end{lemma}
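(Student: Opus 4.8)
The plan is to locate the polynomial $L(\sigma_E\otimes\alpha,t)$ inside $\det(1-F^*t\mid H^2(\overline{\mathcal E}',\overline{\mathbb Q}_l)^{\alpha})$ up to factors that do not affect $l_q$, and then to quote Lemma 6.6. The first step is the Leray spectral sequence $H^p(\overline X,R^q\overline{g}'_*\overline{\mathbb Q}_l)\Rightarrow H^{p+q}(\overline{\mathcal E}',\overline{\mathbb Q}_l)$ of the elliptic fibration $\overline{g}'$, which is equivariant for $F^*$ and for $G$. Since $\mathcal E'$ is relatively minimal with generic fibre $E\otimes_FF'$, a standard property of relatively minimal elliptic surfaces identifies $R^1\overline{g}'_*\overline{\mathbb Q}_l$ with the middle extension to $\overline X$ of the restriction of $\sigma_E$ to the good-reduction locus of $E\otimes_FF'$; in particular this local system is geometrically irreducible, so $H^0(\overline X,R^1\overline{g}'_*\overline{\mathbb Q}_l)=H^2(\overline X,R^1\overline{g}'_*\overline{\mathbb Q}_l)=0$, and since $\overline X$ is a curve the sequence degenerates along $p+q=2$, yielding an $F^*$- and $G$-equivariant splitting
$$H^2(\overline{\mathcal E}',\overline{\mathbb Q}_l)=H^2(\overline X,\overline{\mathbb Q}_l)\ \oplus\ H^1(\overline X,R^1\overline{g}'_*\overline{\mathbb Q}_l)\ \oplus\ H^0(\overline X,R^2\overline{g}'_*\overline{\mathbb Q}_l).$$
The two outer summands are built out of even-degree cohomology of curves — $R^2\overline{g}'_*\overline{\mathbb Q}_l$ is the Tate twist of a sheaf with finite monodromy carried by the irreducible components of the fibres — so every eigenvalue of $F^*$ on them, and on their $\alpha$-isotypic parts, has $v_q$ equal to $1$ and hence contributes $0$ to $l_q$.

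It remains to deal with the middle summand. By the hypothesis inherited from Notation 6.7, the ramification locus of $\pi\colon X\to\mathcal C$ — the conductor of $\alpha$ — is disjoint from the conductor of $E$; consequently $\overline{\pi}$ is étale over every point of bad reduction of $E$, the base change $\mathcal E'\to X$ remains relatively minimal, and $R^1\overline{g}'_*\overline{\mathbb Q}_l\cong\overline{\pi}^*(j_*\sigma_E)$, both sides being the middle extension of their common restriction $\overline{\pi}^*\sigma_E$ over $\overline{\pi}^{-1}$ of the good-reduction locus of $E$ (here $j_*\sigma_E$ denotes the middle extension of $\sigma_E$ to $\overline{\mathcal C}$). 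As $\overline{\pi}$ is finite, the projection formula gives
$$H^1(\overline X,R^1\overline{g}'_*\overline{\mathbb Q}_l)\cong H^1\big(\overline{\mathcal C},(j_*\sigma_E)\otimes\overline{\pi}_*\overline{\mathbb Q}_l\big)=\bigoplus_{\chi}H^1\big(\overline{\mathcal C},(j_*\sigma_E)\otimes(\overline{\pi}_*\overline{\mathbb Q}_l)^{\chi}\big),$$
the sum running over the characters $\chi$ of $G$. Because the two conductors are disjoint, one checks stalk by stalk that $(j_*\sigma_E)\otimes(\overline{\pi}_*\overline{\mathbb Q}_l)^{\alpha}$ is the middle extension of the sheaf attached to the geometrically irreducible Galois representation $\sigma_E\otimes\alpha$, and — after matching the normalisations of Notations 3.2 and 3.7 and of Definition 6.3, which tie together the $G$-action, class field theory and geometric Frobenius — this summand is exactly the $\alpha$-isotypic component. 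Grothendieck's cohomological expression for the $L$-function of $\sigma_E\otimes\alpha$, together with the vanishing of the $H^0$ and $H^2$ of its middle extension (again by irreducibility), then yields $\det(1-F^*t\mid H^1(\overline X,R^1\overline{g}'_*\overline{\mathbb Q}_l)^{\alpha})=L(\sigma_E\otimes\alpha,t)$.

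Combining the two paragraphs, $\det(1-F^*t\mid H^2(\overline{\mathcal E}',\overline{\mathbb Q}_l)^{\alpha})$ equals $L(\sigma_E\otimes\alpha,t)$ times a polynomial whose reciprocal roots all have $v_q=1$; since $l_q$ is additive over products and vanishes on that extra factor, $l_q\big(\det(1-F^*t\mid H^2(\overline{\mathcal E}',\overline{\mathbb Q}_l)^{\alpha})\big)=l_q\big(L(\sigma_E\otimes\alpha,t)\big)$. Finally, Lemma 6.6 applied to $\mathcal E'$ in degree $m=2$ identifies the left-hand side with $l\big(H^2(\overline{\mathcal E}'/W)^{\alpha}\otimes_WK_{[0,1)}\big)$, which proves the lemma. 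I expect the main obstacle to lie in the second paragraph: one must check that $R^1\overline{g}'_*\overline{\mathbb Q}_l$ is a genuine middle extension — equivalently, that base change along $\pi$ preserves relative minimality, which fails without the disjointness of the two conductors — and that the projection-formula decomposition singles out the $\alpha$-isotypic piece in precisely the normalisation fixed earlier, so that the outcome is $L(\sigma_E\otimes\alpha,t)$ rather than its inverse or a Tate twist of it.
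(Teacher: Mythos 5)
Your argument is correct and follows essentially the same route as the paper's: both strip off the slope-$1$ algebraic part of $H^2(\overline{\mathcal E}',\overline{\mathbb Q}_l)^\alpha$, identify the remaining piece $H^1(\overline X,R^1\overline g'_*\overline{\mathbb Q}_l)^\alpha$ with $H^1(\overline{\mathcal C},j_*(\sigma_E\otimes\alpha))$ via the Galois cover, apply the Grothendieck--Verdier trace formula together with the vanishing of $H^0$ and $H^2$ forced by irreducibility, and then invoke Lemma~6.6. The only cosmetic differences are that the paper realizes the first step by identifying the complement of the image of the Leray edge map $\zeta$ as the span of the Chern classes of the zero section and the fibres, rather than via your Leray degeneration and splitting, and phrases the isotypic identification through degeneration of the Hochschild--Serre spectral sequence rather than the projection formula.
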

\begin{proof} For every morphism $m:R\rightarrow S$ of
Spec$(\mathbb F_q)$-schemes let $\overline m:\overline R\rightarrow\overline S$ denote the base change to Spec$(\mathbf k)$. The Leray spectral sequence $H^p(\overline X,R^q\overline g'_*(\overline{\mathbb Q}_l(1)))\Rightarrow
H^{p+q}(\overline{\mathcal E'},\overline{\mathbb Q}_l(1))$ furnishes an injection $\zeta:H^1(\overline X,R^1\overline g'_*(\overline{\mathbb Q}_l(1)))\rightarrow H^2(\overline{\mathcal E}',\overline{\mathbb Q}_l(1))$ and the image of this map is the orthogonal complement of the $\overline{\mathbb Q}_l$-linear subspace $V$ spanned by the Chern classes of the zero section and the fibres of the elliptic fibration $\overline{\mathcal E}'\rightarrow\overline{\mathcal C}$, considered here as divisors on the surface $\overline{\mathcal E}'$, with respect to the cup product pairing. The cohomology group $H^1(\overline X,R^1\overline g'_*(\overline{\mathbb Q}_l(1)))$ is naturally equipped with a $G$-action because the map $\overline g'$ is equivariant with respect to the action of $G$ on $\overline{\mathcal E}'$ and $\overline X$, respectively. Moreover $\zeta$ is $G$-linear. Hence we have an isomorphism:
\begin{equation}
H^1(\overline X,R^1\overline g'_*(\overline{\mathbb Q}_l))^{\alpha}\oplus
V^{\alpha}(-1)\cong
H^2(\overline{\mathcal E}',\overline{\mathbb Q}_l)^{\alpha}.
\end{equation}
Note that the cohomology group $H^1(\overline X,R^1\overline g'_*(\overline{\mathbb Q}_l))$ is equipped with the action of a Frobenius operator $F^*$ which commutes with the action of $G$ and the isomorphism in (6.8.1) respects the action of the operator $F^*$ on both sides. Moreover for every eigenvalue $\lambda$ of $F^*$ on $V(-1)$ we have $v_q(\lambda)=1$, hence
$$l_q(\det(1-F^*t|H^1(\overline X,R^1\overline g'_*(\overline{\mathbb Q}_l))^{\alpha}))=l_q(
\det(1-F^*t|H^2(\overline{\mathcal E}',\overline{\mathbb Q}_l)^{\alpha})).$$
We may consider $\alpha$ a lisse $l$-adic sheaf on Spec$(F)$. By slight abuse of notation let $\alpha$ also denote the pull-back onto $\overline{\mathcal C}$ of the direct image of this sheaf $\alpha$ with respect to the open immersion Spec$(F)\rightarrow\mathcal C$. Because the Galois representation $H^1(E_{\overline F},\overline{\mathbb Q}_l)$ is absolutely irreducible and self-dual we have
$$H^0(\overline{\mathcal C},R^1\overline g_*(\overline{\mathbb Q}_l)\otimes\alpha)=
H^2(\overline{\mathcal C},R^1\overline g_*(\overline{\mathbb Q}_l)
\otimes\alpha)=0.$$
Moreover $H^1(\overline{\mathcal C},R^1\overline g_*(\overline{\mathbb Q}_l)\otimes\alpha)=
H^1(\overline X,R^1\overline g'_*(\overline{\mathbb Q}_l))^{\alpha}$ by the degeneration of the Hoch\-schild-Serre spectral sequence. Hence by the Grothendieck-Verdier trace formula:
$$l_q(L(\sigma_E\otimes\alpha,t))=
l_q(\det(1-F^*t|H^1(\overline X,R^1\overline g'_*(\overline{\mathbb Q}_l))^{\alpha})=
l_q(\det(1-F^*t|H^2(\overline{\mathcal E}',\overline{\mathbb Q}_l)^{\alpha})).$$
The claim now follows from Lemma 6.6.
\end{proof}
\begin{thm} Assume that $p$ does not divide the order of $\text{\rm Pic}_0(\mathcal C)(\mathbb F_q)$. Then we have:
$$m(E)\leq d(\frac{1}{12}\deg(\Delta_E)+g-1).$$
\end{thm}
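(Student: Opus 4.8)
The plan is to combine the exact formula for the Manin constant proved in Section~4 with the cohomological input of Sections~5 and~6. Since $p$ does not divide the order of $\mathrm{Pic}_0(\mathcal C)(\mathbb F_q)$, Proposition~4.5 gives $m(E)=d\sup_{\alpha\in\mathbb X(\mathfrak n)}\big(l_q(L(\sigma_E\otimes\alpha,t))+g-1-v_q(\epsilon(\alpha^{-1}))\big)$, so after cancelling the common summand $g-1$ it is enough to prove, for every $\alpha\in\mathbb X(\mathfrak n)$, the pointwise inequality $l_q(L(\sigma_E\otimes\alpha,t))\le\tfrac{1}{12}\deg(\Delta_E)+v_q(\epsilon(\alpha^{-1}))$; taking the supremum over $\alpha$ and multiplying by $d$ then gives the theorem. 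Moreover one may assume the order of $\alpha$ is prime to $p$: a finite-order idele class character factors as $\alpha=\alpha_{p'}\alpha_p$ with $\alpha_{p'}$ of order prime to $p$ and $\alpha_p$ of $p$-power order, and, when $\alpha$ is tame, the hypothesis on $\mathrm{Pic}_0(\mathcal C)(\mathbb F_q)$ forces $\alpha_p$ to be everywhere unramified of the form $\alpha_p(F_x)=\omega^{\deg(x)}$ for a fixed root of unity $\omega$, so twisting $\sigma_E$ by $\alpha_p$ only replaces the variable $t$ of the $L$-function by $\omega t$, changing neither $l_q(L(\sigma_E\otimes\alpha,t))$ nor $v_q(\epsilon(\alpha^{-1}))$.

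So fix $\alpha\in\mathbb X(\mathfrak n)$ of order prime to $p$ and adopt the notation introduced in~6.7: let $\pi\colon X\to\mathcal C$ be the cyclic cover cut out by $\alpha$, with Galois group $G=\mathrm{im}(\alpha)$ (abelian of order prime to $p$, as $\alpha$ is tame), and $g'\colon\mathcal E'\to X$ the base change of the minimal model $g\colon\mathcal E\to\mathcal C$; since the conductor of $\alpha$ is prime to $\mathfrak n$, the ramification of $\pi$ avoids the conductor of $E$, so Notation~5.9 is in force. I would then chain together three facts. First, Lemma~6.8 identifies $l_q(L(\sigma_E\otimes\alpha,t))$ with the length $l(H^2(\overline{\mathcal E}'/W)^\alpha\otimes_WK_{[0,1)})$ of the part with slopes below one of the $\alpha$-isotypic crystalline $H^2$ of the surface $\overline{\mathcal E}'$. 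Second, Proposition~6.4, applied with $r=2$ and $X=\overline{\mathcal E}'$ --- a smooth projective $\mathbf k$-variety ($\mathbf k=\overline{\mathbb F_q}$) carrying the $G$-action --- bounds this length above by the Hodge number $\dim_{\mathbf k}(H^2(\overline{\mathcal E}',\mathcal O_{\overline{\mathcal E}'})^\alpha)$. Third, Theorem~5.10 computes the $\mathbb F_q[G]$-module $H^2(\mathcal E',\mathcal O_{\mathcal E'})$ through $\Delta(\mathrm{Res}_{\mathbb F_q\to\mathbb F_p}[H^2(\mathcal E',\mathcal O_{\mathcal E'})])(\alpha)=\tfrac{d}{12}\deg(\Delta_E)+d\,v_q(\epsilon(\alpha^{-1}))$.

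The glue between the second and third facts, and the point where I expect the real care to be required, is a short comparison lemma: for every cohomologically trivial $\mathbb F_q[G]$-module $N$ one has $\dim_{\mathbf k}(\mathbf k\otimes_{\mathbb F_q}N)^\alpha=\tfrac{1}{d}\,\Delta(\mathrm{Res}_{\mathbb F_q\to\mathbb F_p}[N])(\alpha)$. This is Galois descent along $\mathbb F_q/\mathbb F_p$ together with Teichm\"uller lifting of the prime-to-$p$ idempotents $\pi_\alpha$ of Definition~6.3, and making it precise amounts to keeping mutually consistent the factor $d=[\mathbb F_q:\mathbb F_p]$, the sign conventions on $\epsilon$-constants and on slopes, and the identifications of $R(G)$ through the embeddings $\overline{\mathbb Q}_l\cong\mathbb C$ and $\overline{\mathbb Q}_l\cong\overline{\mathbb Q}_p$ chosen in Sections~3 and~6. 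Applying the comparison lemma to $N=H^2(\mathcal E',\mathcal O_{\mathcal E'})$, together with the flat base change isomorphism $H^2(\overline{\mathcal E}',\mathcal O_{\overline{\mathcal E}'})=\mathbf k\otimes_{\mathbb F_q}H^2(\mathcal E',\mathcal O_{\mathcal E'})$ of $\mathbf k[G]$-modules, turns the third fact into $\dim_{\mathbf k}(H^2(\overline{\mathcal E}',\mathcal O_{\overline{\mathcal E}'})^\alpha)=\tfrac{1}{12}\deg(\Delta_E)+v_q(\epsilon(\alpha^{-1}))$; chaining this with the first two facts gives exactly the pointwise inequality above, and the argument closes as in the first paragraph. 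The genuine content is already available --- Theorem~5.10 is an equivariant version of Chinburg's $\epsilon$-constant formula and Proposition~6.4 an equivariant ``Newton lies above Hodge'' estimate --- so apart from this comparison lemma everything that remains is formal.
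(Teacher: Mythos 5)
Your overall strategy agrees with the paper's: Proposition 4.5 reduces the theorem to the pointwise estimate $l_q(L(\sigma_E\otimes\alpha,t))\le\frac{1}{12}\deg(\Delta_E)+v_q(\epsilon(\alpha^{-1}))$ for each $\alpha\in\mathbb X(\mathfrak n)$, and this is to be established by combining Lemma 6.8 (the crystalline interpretation of $l_q$), Proposition 6.4 (the equivariant ``Newton above Hodge'' estimate) and Theorem 5.10 (the equivariant $\epsilon$-constant formula). The preliminary reduction to characters of order prime to $p$ is a point the paper leaves implicit, and your argument for it is sound.

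The gap is precisely in the ``comparison lemma'' that you single out as the delicate step. The proposed identity
$$\dim_{\mathbf k}\big((\mathbf k\otimes_{\mathbb F_q}N)^{\alpha}\big)=\frac{1}{d}\,\Delta\big(\mathrm{Res}_{\mathbb F_q\rightarrow\mathbb F_p}[N]\big)(\alpha)$$
cannot hold as stated. By construction $\Delta(\mathrm{Res}_{\mathbb F_q\rightarrow\mathbb F_p}[N])$ is the character of $\overline{\mathbb Q}_p\otimes_{\mathbb Z_p}P$ for a projective $\mathbb Z_p[G]$-lift $P$ of the underlying $\mathbb F_p[G]$-module; hence it is the base change of a $\mathbb Q_p$-rational virtual character, and in particular $\Delta(\mathrm{Res}_{\mathbb F_q\rightarrow\mathbb F_p}[N])(\alpha)$ depends only on the orbit of $\alpha$ under $\mathrm{Gal}(\overline{\mathbb Q}_p|\mathbb Q_p)$. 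The left-hand side does not. Concretely, take $G$ cyclic of order $3$, $p=2$, $q=4$ and $N=\mathbb F_4$ with a chosen generator $g$ of $G$ acting by a primitive cube root of unity $\zeta\in\mathbb F_4^*$: then $\dim_{\mathbf k}((\mathbf k\otimes_{\mathbb F_4}N)^{\alpha})$ equals $1$ or $0$ according as the Teichm\"uller representative $\alpha(g)\in\overline{\mathbb Q}_2^*$ reduces to $\zeta$ or to $\zeta^2$, whereas $\frac{1}{d}\Delta(\mathrm{Res}_{\mathbb F_4\rightarrow\mathbb F_2}[N])(\alpha)=\frac{1}{2}$ for both conjugates --- the right-hand side is not even an integer. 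What is actually true, and what the paper cites from Remark 4.5 of [1], is the averaged formula
$$\Delta\big(\mathrm{Res}_{\mathbb F_q\rightarrow\mathbb F_p}[N]\big)(\alpha)=\frac{d}{|C(\alpha)|}\sum_{\beta\in C(\alpha)}\dim_{\mathbf k}\big((\mathbf k\otimes_{\mathbb F_q}N)^{\beta}\big),$$
where $C(\alpha)$ denotes the $\mathrm{Gal}(\overline{\mathbb Q}_p|\mathbb Q_p)$-conjugacy class of $\alpha$.

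To close the argument you must therefore average over $C(\alpha)$ before invoking Theorem 5.10, and for this you need to know that the two quantities you are ultimately comparing are themselves constant on $C(\alpha)$. They are: since $l_q$ is defined through the valuation $v_q$, which is $\mathrm{Gal}(\overline{\mathbb Q}_p|\mathbb Q_p)$-invariant, one has $l_q(L(\sigma_E\otimes\beta,t))=l_q(L(\sigma_E\otimes\alpha,t))$ for every $\beta\in C(\alpha)$, and by the same token $v_q(\epsilon(\beta^{-1}))=v_q(\epsilon(\alpha^{-1}))$. Writing $l_q(L(\sigma_E\otimes\alpha,t))$ as the average over $\beta\in C(\alpha)$ of $l_q(L(\sigma_E\otimes\beta,t))$, applying Lemma 6.8 and Proposition 6.4 termwise to bound each summand by $\dim_{\mathbf k}((\mathbf k\otimes_{\mathbb F_q}H^2(\mathcal E',\mathcal O_{\mathcal E'}))^{\beta})$, and then using the averaged formula together with Theorem 5.10 yields exactly the pointwise inequality you want. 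This is how the paper's proof of the theorem proceeds; without this averaging step your chain of equalities and inequalities does not close.
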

\begin{proof} Let $\alpha\in\mathbb X(\mathfrak n)$ be arbitrary and let $C(\alpha)$ denote the set of all characters $\beta:\text{Gal}(F'|F)
\rightarrow\overline{\mathbb Q}_p^*$ which are conjugate to $\alpha$ under the
action of the absolute Galois group Gal$(\overline{\mathbb Q}_p|\mathbb Q_p)$ on
$\overline{\mathbb Q}_p$. For every $\beta\in C(\alpha)$ we have:
\begin{equation}
l_q(L(\sigma_E\otimes\beta,t))\leq
\dim_{\mathbf k}(H^2(\overline{\mathcal E}',
\mathcal O_{\overline{\mathcal E}'})^{\beta})=
\dim_{\mathbf k}(H^2(\mathcal E',\mathcal O_{\mathcal E'})
\otimes_{\mathbb F_q}\mathbf k^{\beta})
\end{equation}
by Proposition 6.4 and Lemma 6.8. According to Remark 4.5 of [1] on page 460 we have 
\begin{equation}
\Delta(\text{Res}_{\mathbb F_q\rightarrow\mathbb F_p}
([H^2(\mathcal E',\mathcal O_{\mathcal E'})])(\alpha)=
\frac{d}{|C(\alpha)|}\sum_{\beta\in C(\alpha)}
\dim_{\mathbf k}(H^2(\mathcal E',\mathcal O_{\mathcal E'})
\otimes_{\mathbb F_q}\mathbf k^{\beta}).
\end{equation}
Because the action of  Gal$(\overline{\mathbb Q}_p|\mathbb Q_p)$ on $\overline{\mathbb Q}_p$ leaves the valuation $v_q$ invariant we have:
$$l_q(L(\sigma_E\otimes\beta,t))=l_q(L(\sigma_E\otimes\alpha,t))$$
for every $\beta\in C(\alpha)$. Hence (6.9.1), (6.9.2) and Theorem 5.10 imply that
\begin{eqnarray} l_q(L(\sigma_E\otimes\alpha,t))
&=&\frac{1}{|C(\alpha)|}\sum_{\beta\in C(\alpha)}
l_q(L(\sigma_E\otimes\beta,t))\nonumber\\
&\leq&\frac{1}{|C(\alpha)|}\sum_{\beta\in C(\alpha)}
\dim_{\mathbf k}(H^2(\mathcal E',\mathcal O_{\mathcal E'})
\otimes_{\mathbb F_q}\mathbf k^{\beta})\nonumber\\
&=&\frac{1}{d}\cdot\Delta(\text{Res}_{\mathbb F_q\rightarrow\mathbb F_p}
([H^2(\mathcal E',\mathcal O_{\mathcal E'})])(\alpha)\nonumber\\
&=&\frac{1}{12}\deg(\Delta_E)+v_q(\epsilon(\alpha^{-1})).\nonumber
\end{eqnarray}
The claim now follows from Proposition 4.5.
\end{proof}
\begin{defn} Following [14] we will call a smooth projective variety $V$ defined over a perfect field of characteristic $p$ ordinary in dimension $n$ if the $n$-dimensional Newton and the $n$-dimensional Hodge polygons of $V$ agree. 
\end{defn}
\begin{thm} Assume that the elliptic surface $\mathcal E$ is ordinary in dimension $2$. Then we have:
$$m(E)\geq d(\frac{1}{12}\deg(\Delta_E)+g-1).$$
\end{thm}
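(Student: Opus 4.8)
The plan is to run the argument from the proof of Theorem 6.9 in reverse, specialised to the trivial character, and to observe that the ordinariness hypothesis upgrades the single inequality used there into an equality.

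First I would take $\alpha$ to be the trivial character $\mathbf 1\in\mathbb X(\mathfrak n)$. In the situation of Notation 6.7 this makes $F'=F$, the cover $\pi:X\rightarrow\mathcal C$ the identity, the group $G$ trivial, $\mathcal E'=\mathcal E$ and $\sigma_E\otimes\alpha=\sigma_E$; moreover the Galois conjugacy class $C(\mathbf 1)$ occurring in the proof of Theorem 6.9 is a singleton, so no averaging is needed. By Lemma 6.8 we have $l_q(L(\sigma_E,t))=l(H^2(\overline{\mathcal E}/W)\otimes_WK_{[0,1)})$, and combining Proposition 6.4 (applied with $r=2$ and the trivial character) with the identification $H^2(\overline{\mathcal E},\mathcal O_{\overline{\mathcal E}})=H^2(\mathcal E,\mathcal O_{\mathcal E})\otimes_{\mathbb F_q}\mathbf k$ and with Theorem 5.10 for the trivial cover $\mathcal C\rightarrow\mathcal C$ one obtains
$$l_q(L(\sigma_E,t))\ \le\ \dim_{\mathbf k}H^2(\overline{\mathcal E},\mathcal O_{\overline{\mathcal E}})\ =\ \frac{1}{d}\,\Delta\bigl(\text{Res}_{\mathbb F_q\rightarrow\mathbb F_p}([H^2(\mathcal E,\mathcal O_{\mathcal E})])\bigr)(\mathbf 1)\ =\ \frac{1}{12}\deg(\Delta_E)+v_q(\epsilon(\mathbf 1^{-1})).$$

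The crux is to show that the first inequality is in fact an equality, and this is exactly where the ordinariness enters. By Definition 6.10 the $2$-dimensional Newton polygon of $\mathcal E$ coincides with its $2$-dimensional Hodge polygon; the latter has its slope-$0$ part of length $\dim_{\mathbb F_q}H^2(\mathcal E,\mathcal O_{\mathcal E})$ while all of its remaining slopes are at least $1$. Hence the maximal subquotient $H^2(\overline{\mathcal E}/W)\otimes_WK_{[0,1)}$ of the $F$-isocrystal $H^2(\overline{\mathcal E}/W)\otimes_WK$ is pure of slope $0$ and of dimension $\dim_{\mathbb F_q}H^2(\mathcal E,\mathcal O_{\mathcal E})$, so
$$l(H^2(\overline{\mathcal E}/W)\otimes_WK_{[0,1)})\ =\ \dim_{\mathbb F_q}H^2(\mathcal E,\mathcal O_{\mathcal E})\ =\ \dim_{\mathbf k}H^2(\overline{\mathcal E},\mathcal O_{\overline{\mathcal E}}),$$
which is precisely the equality needed.

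With this in hand the first display becomes a chain of equalities, so $l_q(L(\sigma_E,t))=\frac{1}{12}\deg(\Delta_E)+v_q(\epsilon(\mathbf 1^{-1}))$, and feeding this into Proposition 4.5 applied to $\alpha=\mathbf 1$ gives
$$m(E)\ \ge\ d\bigl(l_q(L(\sigma_E,t))+g-1-v_q(\epsilon(\mathbf 1^{-1}))\bigr)\ =\ d\Bigl(\frac{1}{12}\deg(\Delta_E)+g-1\Bigr),$$
as claimed. The one point that needs care is the step passing from Newton $=$ Hodge in dimension $2$ to the assertion that the slopes-in-$[0,1)$ part of $H^2(\overline{\mathcal E}/W)$ is pure of slope $0$ with rank the Hodge number $\dim_{\mathbb F_q}H^2(\mathcal E,\mathcal O_{\mathcal E})$; once one matches the normalisations of the Newton and Hodge polygons with the valuation $v_q$ used throughout (equivalently, keeps track of the factor $d$ between the absolute Frobenius and the $\mathbb F_q$-linear Frobenius), this is immediate, and the remainder is pure bookkeeping over results already established.
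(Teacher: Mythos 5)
Your proof is correct and takes essentially the same route as the paper: specialise to $\alpha=\mathbf 1$, combine Lemma 6.8, Proposition 6.4 and Theorem 5.10 (with trivial cover), and use ordinariness to saturate the inequality before invoking Proposition 4.5. The only cosmetic difference is that the paper computes $v_q(\epsilon(\mathbf 1))=g-1$ explicitly and then states the ordinariness step "by definition", whereas you let the $\epsilon$-term cancel formally and spell out why the $[0,1)$-part of $H^2_{\mathrm{crys}}$ is pure of slope $0$ of rank $\dim H^2(\mathcal E,\mathcal O_{\mathcal E})$; both are sound.
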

\begin{proof} By Deligne's purity theorem we have $\epsilon(1)=\pm q^{g-1}$ where $1$ denotes the trivial representation. Hence in the special case when $G$ is the trivial group, that is, the cover $f$ is the identity map of $\mathcal C$ onto itself, Theorem says 5.10 that
$$\dim_{\mathbb F_q}H^2(\mathcal E,\mathcal O_{\mathcal E})=
\frac{1}{12}\deg(\Delta_E)+g-1.$$
Because $\mathcal E$ is ordinary in dimension $2$ we have
$$l_q(L(E,t))=
l_q(\det(1-F^*t|H^2(\overline{\mathcal E},\mathbb Q_l)))=
\dim_{\mathbb F_q}H^2(\mathcal E,\mathcal O_{\mathcal E})$$
by definition. The claim now follows from Proposition 4.5.
\end{proof}
\begin{rem} It is known that a smooth projective variety $V$ is ordinary if $V$ is a generic curve of genus $g$ (see [4], [16]), a generic abelian variety of dimension $d$ equipped with a polarisation of degree $r$ (see [18], [19]), or a generic complete intersection of
multidegree $(a_1,a_2,\ldots,a_n)$ (see [9]). It is natural
to expect that the same holds for elliptic surfaces with a section. 
More precisely one might conjecture the following. Assume that $p>3$, let $N\geq2$ be a positive integer, let $g\in\mathbb N$ and let $\mathcal M_{g,N,p}$ denote the course moduli representing the functor which associates to every scheme $T$ over Spec$(\mathbb F_p)$ the set of isomorphism classes of smooth families of elliptic surfaces over a smooth curve of genus $g$ over $T$ with discriminant of degree $12N$ in all geometric fibres over $T$, constructed in the paper [24]. Then I expect that for every $p$, $g$ and $N$ as above there is a non-empty open subscheme $\mathcal U$ of $\mathcal M_{g,N,p}$ such that for every geometric point of $\mathcal U$ the corresponding elliptic surface is ordinary. Of course it is enough to show that there a geometric point of $\mathcal M_{g,N,p}$ such that the corresponding elliptic surface is ordinary in dimension two.
\end{rem}

\section{An upper bound in terms of the conductor}

\begin{notn} For every elliptic curve $E$ defined over a field $K$ of characteristic $p$ let $E^{(p)}$ denote pull-back of $E$ with respect to the Frobenius map $K\rightarrow K$ (given by $x\mapsto x^p$). We will call $E^{(p)}$ the Frobenius twist of $E$. The elliptic curve $E^{(p)}$ is in fact defined over the subfield $K^p$ of $p$-th powers. The absolute Frobenius $\mathbf F:E\rightarrow E^{(p)}$ is an isogeny defined over the field $K$. Finally for every cohomology class $c\in H^1(K,\text{\rm Aut}(E))$ let $E_c$ denote twist of $E$ by $c$.
\end{notn}
\begin{defn} Let $K$ be as above and let $E$ be an elliptic curve defined over $K$ such that $j(E)\neq0,1728$. Because $j(E^{(p)})=j(E)^p$ we have $j(E^{(p)})\neq0,1728$. Hence by part $(c)$ of Proposition 1.2 of [26] on pages 325-326 the groups $\text{\rm Aut}(E)$ and $\text{\rm Aut}(E^{(p)})$ are both equal to multiplication by $\pm1$. Therefore there is a unique isomorphism $f:\text{\rm Aut}(E)\rightarrow\text{\rm Aut}(E^{(p)})$ such that $\mathbf F\circ\phi=
f(\phi)\circ\mathbf F$ for every $\phi\in\text{\rm Aut}(E)$. Let $f_*:H^1(K,\text{\rm Aut}(E))\rightarrow H^1(K,\text{\rm Aut}(E^{(p)}))$ denote the isomorphism induced by the identification $f$.
\end{defn}
The following two claims will be useful, and they seem not to be recorded in the literature.
\begin{lemma} Assume that $j(E)\neq0,1728$. Then the elliptic curves $(E_c)^{(p)}$ and $(E^{(p)})_{f_*(c)}$ are isomorphic over $K$ for every $c\in H^1(K,\text{\rm Aut}(E))$.
\end{lemma}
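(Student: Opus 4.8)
The plan is to prove the isomorphism functorially, from the naturality of the Frobenius isogeny and of the formation of twists, rather than by manipulating Weierstrass equations. First I would record the needed properties of the Frobenius twist. The assignment $X\mapsto X^{(p)}$ is base change along the absolute Frobenius of $\text{Spec}(K)$, so it commutes with arbitrary base change; in particular $(E^{(p)})_{\overline K}=(E_{\overline K})^{(p)}$. Since the $p$-th power map on $\overline K$ commutes with the action of $\text{Gal}(\overline K|K)$, this identification is $\text{Gal}(\overline K|K)$-equivariant, and for every $\phi\in\text{Aut}(E_{\overline K})$ the Frobenius twist $\phi^{(p)}$ is a well-defined automorphism of $(E^{(p)})_{\overline K}$. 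Because $j(E)\neq0,1728$ we are in the situation of Definition 7.2, so $\text{Aut}(E_{\overline K})$ and $\text{Aut}((E^{(p)})_{\overline K})$ are both $\{\pm1\}$ with trivial Galois action.

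Next I would identify $f$ with the Frobenius twist on automorphisms. For $\phi\in\text{Aut}(E)$ write $\phi^{(p)}\in\text{Aut}(E^{(p)})$ for its base change along the absolute Frobenius of $\text{Spec}(K)$; naturality of the Frobenius isogeny $\mathbf F$ gives $\mathbf F\circ\phi=\phi^{(p)}\circ\mathbf F$, so the uniqueness clause of Definition 7.2 forces $f(\phi)=\phi^{(p)}$, and the same holds after base change to $\overline K$. In particular $f_*(c)$ is represented by the cocycle $\sigma\mapsto c_\sigma^{(p)}$.

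Finally I would push the descent datum of $E_c$ through the functor $(-)^{(p)}$. Realising $E_c$ as the Galois descent of the pair $(E_{\overline K},\,\sigma\mapsto c_\sigma\circ\sigma)$ and applying $(-)^{(p)}$ — which by the first paragraph commutes with the base change $\text{Spec}(\overline K)\to\text{Spec}(K)$ and with the Galois action — one obtains the pair $((E^{(p)})_{\overline K},\,\sigma\mapsto c_\sigma^{(p)}\circ\sigma)$, which is exactly the twisted Galois action defining $(E^{(p)})_{f_*(c)}$; descending back to $K$ yields $(E_c)^{(p)}\cong(E^{(p)})_{f_*(c)}$. The step I expect to be the most delicate is this last piece of bookkeeping, namely checking that $(-)^{(p)}$ carries the twisted descent datum of $E_c$ precisely onto that of $(E^{(p)})_{f_*(c)}$; it rests on the compatibility of the Frobenius twist with base change to $\overline K$ together with the identity $\sigma\circ(x\mapsto x^p)=(x\mapsto x^p)\circ\sigma$ on $\overline K$. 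As an alternative one can verify the claim with explicit models in characteristic $\neq2,3$: writing $E\colon y^2=x^3+Ax+B$ with $AB\neq0$, we have $E_c\colon y^2=x^3+Ad^2x+Bd^3$ for a representative $d\in K^*$, hence $(E_c)^{(p)}\colon y^2=x^3+A^pd^{2p}x+B^pd^{3p}$ is the twist of $E^{(p)}\colon y^2=x^3+A^px+B^p$ by $d^p$; since $p-1$ is even the classes of $d$ and $d^p$ in $K^*/(K^*)^2$ coincide, so this is the twist by $f_*(c)$, and the cases $p=2,3$ are handled identically using generalised Weierstrass forms and Artin--Schreier theory in place of Kummer theory.
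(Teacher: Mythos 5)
Your proof is correct and takes essentially the same route as the paper's: both push a cocycle representing $c$ through the absolute Frobenius, using the naturality of $\mathbf F$ (the paper's relation $\mathbf F\circ\phi=\psi\circ\mathbf F$ for an isomorphism $\phi:\overline E\to\overline{E_c}$, which you phrase as the functor $(-)^{(p)}$ commuting with base change to $\overline K$ and with the Galois action) to identify the resulting cocycle with one representing $f_*(c)$. The explicit Weierstrass-equation alternative you offer for $p\neq 2,3$ is a harmless sanity check but is not part of the paper's argument.
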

\begin{proof} For every scheme $X$ over Spec$(K)$ let $\overline X$ denote $X\times_{\text{Spec}(K)}\text{Spec}(\overline K)$. For every $\gamma\in\text{Gal}(\overline K|K)$ and $X$ as above let the symbol $\gamma$ also denote the unique endomorphism of $\overline X$ which makes the diagram:
$$\CD\overline X@>\gamma>>\overline X\\
@VVV@VVV\\ \text{Spec}(\overline K)
@>\gamma>>\text{Spec}(\overline K)\endCD$$
commutative. Fix an isomorphism $\phi:\overline E\rightarrow
\overline{E_c}$ over $\overline K$. Then for every $\gamma\in\text{Gal}(\overline K|K)$ the pull-back of the diagram:
$$\CD\overline E@>\phi>>\overline{E_c}@>\gamma>>
\overline{E_c}@>\phi^{-1}>>\overline E@>\gamma^{-1}>>
\overline E\endCD$$
with respect to the Frobenius map $x\mapsto x^p$ of Spec$(\overline K)$ is:
$$\CD\overline{E^{(p)}}@>\psi>>\overline{(E_c)^{(p)}}
@>\gamma>>\overline{(E_c)^{(p)}}@>\psi^{-1}>>
\overline{E^{(p)}}@>\gamma^{-1}>>\overline{E^{(p)}},\endCD$$
where $\psi:\overline{E^{(p)}}\rightarrow
\overline{(E_c)^{(p)}}$ is the unique isomorphism such that $\mathbf F\circ\phi=\psi\circ\mathbf F$. The Aut$(E)$-valued function $\gamma\mapsto\gamma^{-1}\circ\phi^{-1}\circ\gamma\circ\phi$ on Gal$(\overline F|F)$ is a cocycle which represents $c$. Note that
$$\mathbf F\circ\gamma^{-1}\circ\phi^{-1}\circ\gamma\circ\phi=
\gamma^{-1}\circ\psi^{-1}\circ\gamma\circ\psi\circ\mathbf F\quad(\forall\gamma\in
\textrm{Gal}(\overline F|F)),$$
therefore the function $\gamma\mapsto\gamma^{-1}\circ\psi^{-1}\circ\gamma\circ\psi$ is a cocycle which represents $f_*(c)$. The claim is now clear.
\end{proof}
\begin{prop} Let $K$ be a field of characteristic $p$ and let $E$ be an elliptic curve defined over $K$. Assume that $j(E)\neq0,1728$ and $j(E)\in K^p$. Then $E$ is isomorphic to the Frobenius twist of an elliptic curve $E'$ defined over $K$. 
\end{prop}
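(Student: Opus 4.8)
The plan is to deduce the statement from the twisting formalism of Definitions 7.2--7.3 together with Lemma 7.4. Since $j(E)\in K^p$, first I would choose $j_0\in K$ with $j_0^p=j(E)$. Because $1728$ lies in the prime field $\mathbb{F}_p$ one has $1728^p=1728$ in characteristic $p$, and likewise $0^p=0$; hence $j_0^p=j(E)\neq 0,1728$ forces $j_0\neq 0,1728$. By the classical existence of an elliptic curve with any prescribed $j$-invariant over its field of definition (see [26]), pick an elliptic curve $E_0$ over $K$ with $j(E_0)=j_0$. Its Frobenius twist $E_0^{(p)}$ is then an elliptic curve over $K$ with $j(E_0^{(p)})=j_0^p=j(E)$.

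Next I would observe that $E$ and $E_0^{(p)}$, having the same $j$-invariant, are isomorphic over $\overline K$, hence are twists of one another. As that common $j$-invariant is different from $0$ and $1728$, the groups $\mathrm{Aut}(E)=\mathrm{Aut}(E_0^{(p)})$ consist of $\pm 1$ (Proposition 1.2(c) of [26], as recalled in Definition 7.2), so the usual classification of twists provides a class $c\in H^1(K,\mathrm{Aut}(E_0^{(p)}))$ with $E\cong (E_0^{(p)})_c$ over $K$.

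Finally I would apply Lemma 7.4 with the auxiliary curve $E_0$ in place of $E$. The identification $f\colon\mathrm{Aut}(E_0)\to\mathrm{Aut}(E_0^{(p)})$ of Definition 7.2 is an isomorphism, so the induced map $f_*\colon H^1(K,\mathrm{Aut}(E_0))\to H^1(K,\mathrm{Aut}(E_0^{(p)}))$ is a bijection; let $d$ be the unique class with $f_*(d)=c$ and put $E'=(E_0)_d$. Then Lemma 7.4 gives $(E')^{(p)}=((E_0)_d)^{(p)}\cong (E_0^{(p)})_{f_*(d)}=(E_0^{(p)})_c\cong E$ over $K$, which is the assertion. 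Every step here is either classical or already established in the excerpt, so I do not expect a genuine obstacle; the one point requiring a moment's care is the verification that $j_0\neq 0,1728$, since Definition 7.2 and Lemma 7.4 are formulated precisely under this hypothesis and must be applied to $E_0$ and $E_0^{(p)}$.
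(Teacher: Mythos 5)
Your proposal is correct and follows essentially the same route as the paper: take a $p$-th root $j_0$ of $j(E)$, realise it as the $j$-invariant of a curve $E_0/K$, identify $E$ as a twist $(E_0^{(p)})_c$, and then untwist via the commutation of Frobenius twist with quadratic twist (the paper's Lemma 7.3, which you cite as ``Lemma 7.4''). The only addition is your explicit check that $j_0\neq 0,1728$, a hypothesis the paper's proof tacitly assumes before invoking that lemma; that is a welcome clarification but not a different argument.
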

\begin{proof} Let $\lambda\in K$ be the unique $p$-th root of $j(E)$. By Proposition 1.1 of [26] on pages 324-325 there is an elliptic curve $\widetilde E$ defined over $K$ such that $j(\widetilde E)=\lambda$. Then the Frobenius twist $\widetilde E^{(p)}$ of $\widetilde E$ is defined over $K$ and has the same $j$-invariant as $E$. Hence by the theory of twists there is a cohomology class $c\in H^1(K,\text{\rm Aut}(\widetilde E^{(p)}))$ such that $E$ is the twist of $\widetilde E^{(p)}$ by $c$. Let $E'$ be the twist of $\widetilde E$ by $f^{-1}_*(c)$. By Lemma 7.3 above $E$ is isomorphic to the Frobenius twist of $E'$.
\end{proof}
Now let $K$  denote the function field of a smooth, projective, geometrically irreducible curve $X$ defined over a perfect field of characteristic $p$. 
\begin{cor} Let $E$ be a non-isotrivial elliptic curve defined over $K$. Then $E$ is isogeneous to an elliptic curve $E'$ defined over $K$ with the property $j(E')\notin K^p$.
\end{cor}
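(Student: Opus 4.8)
The plan is to combine Proposition 7.5 with an infinite descent along the Frobenius, the descent terminating because $X$ is proper. I argue by contradiction, assuming that \emph{every} elliptic curve over $K$ isogenous to $E$ has $j$-invariant in $K^p$. Since $E$ is non-isotrivial, $j(E)$ is non-constant, i.e.\ it lies outside the field of constants $k=H^0(X,\mathcal O_X)$; in particular $j(E)\neq 0,1728$, because $0,1728\in k$.

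\emph{Descent step.} By the contrary hypothesis $j(E)\in K^p$, so Proposition 7.5 produces an elliptic curve $E_1$ over $K$ with $E\cong E_1^{(p)}$. The absolute Frobenius $\mathbf F\colon E_1\to E_1^{(p)}\cong E$ is an isogeny defined over $K$, so $E_1$ is isogenous to $E$; and from $j(E)=j(E_1^{(p)})=j(E_1)^p$ we see that $j(E_1)$ is the unique $p$-th root of $j(E)$ in $K$, whence $j(E_1)\notin k$ (otherwise $j(E)=j(E_1)^p\in k^p=k$, using that $k$ is perfect), and in particular $j(E_1)\neq 0,1728$. Now iterate, applying Proposition 7.5 to $E_1$ (legitimate by the contrary hypothesis together with the previous sentence), then to $E_2$, and so on. This produces elliptic curves $E=E_0,E_1,E_2,\dots$ over $K$, each isogenous to $E$ by transitivity of the isogeny relation, with $j(E_n)^{p^n}=j(E)$ for every $n\ge 0$; equivalently $j(E)\in\bigcap_{n\ge 0}K^{p^n}$.

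\emph{Termination.} It remains to observe that $\bigcap_{n\ge 0}K^{p^n}=k$. For each closed point $v$ of $X$ one has $\mathrm{ord}_v(y^{p^n})=p^n\,\mathrm{ord}_v(y)$, so a nonzero element of $\bigcap_n K^{p^n}$ has $v$-valuation divisible by $p^n$ for all $n$, hence equal to $0$; being regular and nonvanishing at every point of the proper curve $X$, such a function lies in $H^0(X,\mathcal O_X^{\times})=k^{\times}$. Conversely $k=k^{p^n}\subseteq K^{p^n}$ since $k$ is perfect, so the intersection is exactly $k$. Therefore $j(E)\in k$, contradicting the fact that $j(E)$ is non-constant. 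Hence for some $n$ the elliptic curve $E'=E_n$ is isogenous to $E$ and satisfies $j(E')\notin K^p$, as required.

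The substance of the argument lies entirely in Proposition 7.5; the only point that needs a little care is the termination, namely the identification $\bigcap_n K^{p^n}=k$, which uses the properness of $X$ (so that a rational function with neither zeros nor poles is constant) together with the perfectness of the constant field. Beyond that one only has to record the elementary facts that the absolute Frobenius is an isogeny and that the isogeny relation is transitive, so that all of the curves $E_n$ remain in the isogeny class of $E$.
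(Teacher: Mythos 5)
Your proof is correct and follows essentially the same line as the paper's: both arguments iterate Proposition~7.4 (which you cite as ``Proposition~7.5'' --- a slip in numbering, since 7.5 is the corollary being proved) and both terminate using the fact that $\bigcap_{n}K^{p^n}$ equals the constant field. The paper phrases this as a direct construction --- ``there is $\lambda\in K$ and $n$ with $\lambda\notin K^p$ and $\lambda^{p^n}=j(E)$,'' then applies Proposition~7.4 $n$ times --- whereas you run the same descent as a reductio; logically equivalent, and you add a small service by spelling out why $\bigcap_n K^{p^n}=k$ (valuation divisibility at every closed point plus properness), which the paper leaves implicit.
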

\begin{proof} By assumption $j(E)$ does not lie in the constant field of $K$ hence there is a $\lambda\in K$ and natural number $n$ such that $\lambda\notin K^p$ and $\lambda^{p^n}=j(E)$. By Proposition 7.4 there is an elliptic curve $E'$ defined over $K$ such that $j(E')=\lambda$ and $E$ is the $n$-fold Frobenius twist of $E'$. In particular $E$ and $E'$ are isogeneous.
\end{proof}
For every elliptic curve $E$ defined over $K$ let $\Delta_E$ denote the discriminant of a relatively minimal elliptic surface $\mathcal E\rightarrow X$ whose generic fibre is $E$. Then $\Delta_E$ is an effective divisor on the curve $X$. We will say that a non-isotrivial elliptic curve $E$ is minimal in its isogeny class if we have $\deg(\Delta_E)=\min(\deg(\Delta_{E'}))$ where $E'$ is any elliptic curve defined over $K$ isogeneous to $E$. Let $\mathfrak n$ denote the conductor of $E$. 
\begin{thm} {\bf (Pesenti-Szpiro)} Assume that $E$ is an non-isotrivial elliptic curve which is minimal in its isogeny class. With previous notation we have:
$$\deg(\Delta_E)\leq 6(\deg(\mathfrak n)+2g-2).$$
\end{thm}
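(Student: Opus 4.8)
The plan is to deduce the statement from the Pesenti--Szpiro inequality for elliptic curves whose $j$-invariant is not a $p$-th power, reaching that case within the isogeny class of $E$. Let $E'$ be an elliptic curve over $K$ isogenous to $E$ with $j(E')\notin K^p$, as provided by Corollary 7.5. The conductor of an elliptic curve over $K$ is the Artin conductor of its $l$-adic Galois representation on $H^1$, and an isogeny induces an isomorphism of these representations; hence the conductor is an isogeny invariant and $\deg(\mathfrak n_{E'})=\deg(\mathfrak n)$. Since $E$ is minimal in its isogeny class, $\deg(\Delta_E)\le\deg(\Delta_{E'})$. So it suffices to prove $\deg(\Delta_{E'})\le 6(\deg(\mathfrak n)+2g-2)$, and minimality plays no further part. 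One may also base change along the algebraic closure of the constant field, which preserves the relatively minimal elliptic surface with generic fibre $E'$, all local conductor and discriminant exponents, the degrees of the divisors in play, and the separability of the $j$-map.

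For the remaining inequality I would run the argument of Pesenti and Szpiro through the $j$-map $j_{E'}\colon X\to\mathbb P^1$: the hypothesis $j(E')\notin K^p$ is precisely what makes this map separable, so that Riemann--Hurwitz applies, namely $2g-2=-2\deg(j_{E'})+\deg(\mathcal R)$ with $\mathcal R$ the ramification divisor. The ramification of $j_{E'}$ over $0$, $1728$ and $\infty$ is constrained by the shape of the $j$-invariant, $j=c_4^3/\Delta$ and $j-1728=(\text{const})\,c_6^2/\Delta$: away from the additive bad fibres a point over $0$ has ramification index divisible by $3$, a point over $1728$ has even ramification index, and the points over $\infty$ are exactly the places of potentially multiplicative reduction (Kodaira types $I_n$ and $I_n^{*}$, with pole order $-v(j_{E'})$). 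Writing $k_0,k_{1728},k_\infty$ for the degree-weighted numbers of points of $X$ over $0,1728,\infty$, one gets $k_0\le\tfrac13\deg(j_{E'})$, $k_{1728}\le\tfrac12\deg(j_{E'})$ and $k_\infty\le\deg(\mathfrak n)$, all up to corrections controlled by the additive part of $\deg(\mathfrak n)$; and since $\deg(\mathcal R)$ is at least the ramification over these three points, namely $3\deg(j_{E'})-(k_0+k_{1728}+k_\infty)$, Riemann--Hurwitz gives $\deg(j_{E'})\le k_0+k_{1728}+k_\infty+(2g-2)$, so $(1-\tfrac13-\tfrac12)\deg(j_{E'})\le k_\infty+(2g-2)+(\text{corrections})$, i.e. $\deg(j_{E'})\le 6(\deg(\mathfrak n)+2g-2)+(\text{corrections})$. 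Finally $\deg(\Delta_{E'})$ exceeds $\deg(j_{E'})$ only by the discriminant exponents of the additive fibres (each bounded, e.g. by $5f_v$ when $p>3$), and a comparison with Kodaira's table --- the calculational core, which I would take from the work of Pesenti and Szpiro --- shows that these additive fibres carry just enough slack to absorb all the corrections, yielding $\deg(\Delta_{E'})\le 6(\deg(\mathfrak n)+2g-2)$. In practice I would simply cite Pesenti and Szpiro for this inequality and present only the reduction above.

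The genuine difficulty is therefore twofold and mild. First, one must notice that the sharp discriminant bound is available only under $j\notin K^p$, so the correct move is to pass within the isogeny class; this is legitimate precisely because the conductor is isogeny-invariant and $E$ was taken minimal, and supplying this reduction is the purpose of the lemmas leading up to Corollary 7.5. Second, inside the Pesenti--Szpiro argument the delicate point is characteristics $2$ and $3$: there the separable map $j_{E'}$ can still be wildly ramified over $0$, $1728$ and $\infty$, so the local discriminant and conductor exponents fall outside the short Kodaira list, and one must check that the additive fibres still carry enough slack to absorb the now-wild corrections. This balancing is the technical heart of the work of Pesenti and Szpiro, and it is exactly why $j\notin K^p$, rather than a global tameness hypothesis, is the natural assumption for the inequality.
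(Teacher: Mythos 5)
Your reduction is exactly the paper's proof: apply Corollary 7.5 to obtain an isogenous curve $E'$ with $j(E')\notin K^p$, use the isogeny-invariance of the conductor together with the minimality of $\deg(\Delta_E)$ in the isogeny class, and then cite Pesenti--Szpiro (Th\'eor\`eme 0.1 of [22]) for $E'$. The additional sketch of the Riemann--Hurwitz argument is a reasonable account of what lies inside [22], but since you ultimately defer to that reference your proposal coincides with the paper's.
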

\begin{proof} By Corollary 7.5 the claim follows at once from
Th\'eor\`eme 0.1 of [22] on page 84 and the isogeny-invariance of the conductor.
\end{proof}
Let us return to the situation of the introduction. Because in each isogeny class of non-isotrivial elliptic curves there is an elliptic curve which is minimal the theorem above combined with Theorem 1.3 has the following immediate
\begin{cor} Assume that $p$ does not divide the order of $\text{\rm Pic}_0(\mathcal C)(\mathbb F_q)$. Then we have:
$$m(E)\leq d(\frac{1}{2}\deg(\mathfrak n)+2g-2)
\text{.}$$
\end{cor}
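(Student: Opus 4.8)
The plan is to deduce the bound by feeding the Pesenti--Szpiro inequality (Theorem 7.6) into Theorem 1.3, after first replacing $E$ by a suitable curve in its isogeny class.

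First I would record that $m(E)$ depends only on the isogeny class of $E$. Indeed, the normalised Hecke eigenform $\psi_E$ lives in the automorphic representation $\rho_E$, which is built from the Galois representation $H^1(E_{\overline F},\mathbb Q_l(1))$; this representation, like the conductor $\mathfrak n$, is unchanged under isogeny, and $\psi_E$ is characterised purely in terms of $\rho_E$ and $\mathfrak n$. Hence $c(E)$ and $m(E)$ are isogeny invariants. Since in every isogeny class of non-isotrivial elliptic curves there is a curve which is minimal in its isogeny class (as recalled just before the statement), I may choose such a curve $E'$ isogeneous to $E$; then $m(E)=m(E')$ and the conductor of $E'$ is again $\mathfrak n$.

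Next I would apply Theorem 1.3 to $E'$. The hypothesis on $\text{Pic}_0(\mathcal C)(\mathbb F_q)$ concerns only the base curve $\mathcal C$, so it remains in force, and we obtain $m(E)=m(E')\leq d\big(\tfrac{1}{12}\deg(\Delta_{E'})+g-1\big)$. Because $E'$ is minimal in its isogeny class, Theorem 7.6 applies to $E'$ and gives $\deg(\Delta_{E'})\leq 6(\deg(\mathfrak n)+2g-2)$. Substituting this in and simplifying — the arithmetic amounts to $\tfrac{1}{12}\cdot 6=\tfrac12$ together with $\tfrac12(2g-2)+(g-1)=2g-2$ — yields $m(E)\leq d(\tfrac12\deg(\mathfrak n)+2g-2)$, as claimed.

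There is essentially no obstacle: the statement is a formal combination of two theorems already in hand. The one point that deserves an explicit (if short) argument is the isogeny-invariance of $m(E)$, since this is exactly what licenses the passage to a minimal curve where Pesenti--Szpiro can be invoked; everything after that reduction is bookkeeping.
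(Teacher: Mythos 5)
Your proof is correct and follows exactly the route the paper intends: pass to a minimal curve in the isogeny class, apply Theorem 1.3, and feed in the Pesenti--Szpiro bound from Theorem 7.6. You have in fact been more careful than the paper, which leaves the isogeny-invariance of $m(E)$ implicit (it is asserted in the introduction but not re-justified here); your observation that $\psi_E$ is determined by the isogeny-invariant data $\rho_E$ and $\mathfrak n$ is the right way to supply that missing step.
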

\begin{rem} Note that this inequality is significantly weaker than Theorem 1.3 because the Pesenti-Szpiro inequality fails to be an equality in general. For example in the special case of the elliptic curve $E$ of Theorem 1.4 a fast inspection of Ulmer's paper [29] (see 2.2-2.3 on page 298) reveals that its conductor $\mathfrak n$ is the sum of the prime divisors of the polynomial $T(1-2^43^3T^n)$. Because by the assumptions of Theorem 1.4 the prime $p$ does not divide $2^43^3n$, the greatest common divisor of $1-2^43^3T^n$ and its derivative is:
$$(1-2^43^3T^n,-n2^43^3T^{n-1})=(1),$$
so we get that the polynomial $1-2^43^3T^n$ is square-fee and therefore $\deg(\mathfrak n)=n+1$. Hence in this case Corollary 7.7 says that
$$m(E)\leq\frac{n}{2}-\frac{3}{2}.$$
On the other hand we will see in the next chapter that in this case Theorem 1.3 says that
$$m(E)\leq\frac{n}{6}-1$$
and the two sides above are actually equal by Theorem 1.4. 
\end{rem}
\begin{rem} Corollary 7.7 was already proved by K.-S. Tan in the special case when $\mathcal C=\mathbb P^1_{\mathbb F_q}$ and $\mathfrak n$ is square-free in [28]. His strategy is similar to ours in reducing the result to estimates of the $p$-adic valuations of coefficients of twisted $L$-functions. For the latter he uses the Grothendieck-Ogg-Shafarevich formula and the functional equation. His methods also use facts about the structure of the set $GL_2(F)\backslash GL_2(\mathbb A)$ specific to the rational function field, hence it is impossible to generalise his approach in order to show Corollary 7.7 in general. 
\end{rem}

\section{Elliptic curves with positive Manin constant}

\begin{defn} Fix a positive integer $n$ which
is not divisible by $p$. Let $E_n$ be the elliptic curve over $F=\mathbb F_q(T)$ (where $q=p^d$) with plane cubic model
$$y^2+xy=x^3-T^n.$$
Straightforward calculation shows that $j(E)^{-1}=T^n(1-2^43^3T^n)$. Hence $E$ is not isotrivial. Let $\Delta_{E_n}$ denote the discriminant of a relatively minimal elliptic surface $\mathcal E_n\rightarrow\mathcal C$ whose generic fibre is $E_n$. The degree of $\Delta_{E_n}$ can be easily computed from the results of 2.2-2.3 of [29] on page 298. In particular when $p\geq5$ it follows at once from these results and Table 4.1 of [27] on page 365 that $\deg(\Delta_{E_n})=12\lceil n/6\rceil$. 
\end{defn}
\begin{defn} Let $F_n$ be the Fermat surface of degree $n$ over $\mathbb F_q$, i.e., the hypersurface in $\mathbb P^3_{\mathbb F_q}$ defined by the equation:
$$x_0^n+x_1^n+x_2^n+x_3^n=0.$$
Let $\mu_n$ denote the group of $n$-th roots of unity in $\overline{\mathbb F}_p$. Let $G$ be the quotient of $\mu_n^4$ modulo the diagonally embedded copy of $\mu_n$. For every $\underline z=(\zeta_0,\zeta_1,\zeta_2,\zeta_3)\in\mu_n^4$ let $[\zeta_0,\zeta_1,\zeta_2,\zeta_3]$ denote the image of $\underline z$ under the quotient map $\mu_n^4\rightarrow G$. Then the group scheme $G$ acts on $F_n$ and the action on the level of points is given by the rule:
$$[\zeta_0,\zeta_1,\zeta_2,\zeta_3]\cdot
[x_0:x_1:x_2:x_3]=
[\zeta_0x_0:\zeta_1x_1:\zeta_2x_2:\zeta_3x_3].$$
Fix a primitive $n$-th root of unity $\zeta\in\overline{\mathbb F}_p$ and let $\Gamma\subset G$ be the subgroup generated by $[\zeta^2,\zeta,1,1]$ and $[1,\zeta,\zeta^3,1]$. As a sub group-scheme $\Gamma$ is defined over $\mathbb F_q$ hence the quotient surface $F_n/\Gamma$ is defined over $\mathbb F_q$, too.
\end{defn}
\begin{thm} The surfaces $\mathcal E_n$ and $F_n/\Gamma$ are birationally equivalent. 
\end{thm}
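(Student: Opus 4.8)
The plan is to present $\mathcal E_n$ birationally as a \emph{Delsarte surface} — a projective surface cut out by a sum of four monomials — and then invoke Shioda's theorem that every such surface is, up to birational equivalence over the ground field, the quotient of a Fermat surface by a finite group of diagonal projective automorphisms determined by the monomial data. This is the route Ulmer takes in [29], and the theorem is in essence a restatement of that computation.

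First I would write down the Delsarte model. Homogenising the equation $y^2+xy=x^3-T^n$ with a fourth variable $w$ yields the hypersurface
$$S\colon\quad y^2w^{n-2}+xyw^{n-2}-x^3w^{n-3}+T^n=0$$
in $\mathbb P^3$ with homogeneous coordinates $[x:y:T:w]$; since $6\mid n$ the exponents $n-2$ and $n-3$ are non-negative, and $S\cap\{w\neq 0\}$ is the affine surface whose generic fibre over $\mathbb P^1_T$ is $E_n$, so $S$ is birational to $\mathcal E_n$. The exponents of the four monomials $y^2w^{n-2}$, $xyw^{n-2}$, $x^3w^{n-3}$, $T^n$ in the variables $x,y,T,w$ form the matrix
$$A=\begin{pmatrix}0&2&0&n-2\\ 1&1&0&n-2\\ 3&0&0&n-3\\ 0&0&n&0\end{pmatrix},$$
and expanding along the third column gives $\det A=-n^2\neq 0$, so Shioda's construction applies.

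Next I would run through that construction. The relation $BA^{\mathrm t}=nI$ has the integral solution $B=n(A^{-1})^{\mathrm t}=-\tfrac1n\mathrm{adj}(A^{\mathrm t})$, integral because every $3\times3$ minor of $A$ is divisible by $n$ — the $T$-column of $A$ is $n$ times a standard basis vector, and no other monomial involves $T$. The monomial map $\phi_B\colon\mathbb P^3\dashrightarrow\mathbb P^3$ with exponent matrix $B$ then pulls the defining polynomial of $S$ back to $x_0^n+x_1^n+x_2^n+x_3^n$, so it carries $F_n$ onto $S$; since all row sums of $A$ equal $n$, all row sums of $B$ equal $1$ and $\phi_B$ descends to $\mathbb P^3$, with $\phi_B^{-1}(S)=F_n$ and $\deg(\phi_B|_{F_n})=|\det B|=n^2$. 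Hence $S$ is birationally the quotient of $F_n$ by the group of those diagonal automorphisms of $\mathbb P^3$ preserving $F_n$ that are fixed by $\phi_B$, a subgroup of $\mu_n^4/\mu_n$ of order $n^2$. It remains to identify this subgroup with $\Gamma$: reducing the columns of $\mathrm{adj}(A)$ modulo $n$ and translating into characters of $\mu_n^4/\mu_n$, one checks that the automorphisms killed by $\phi_B$ are precisely those generated by $[\zeta^2,\zeta,1,1]$ and $[1,\zeta,\zeta^3,1]$. Finally, since $n\mid p-1$ all of $\mu_n$ lies in $\mathbb F_p\subseteq\mathbb F_q$, so $F_n$, $\Gamma$, the quotient, and $\phi_B$ are defined over $\mathbb F_q$, and $\mathcal E_n$ and $F_n/\Gamma$ are birationally equivalent over $\mathbb F_q$.

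The main obstacle is this last matching step. Shioda's recipe determines $F_n$ and the deck group only up to the standard normalisation of the monomial map, so the conventions must be chased carefully: one has to pair the cofactor matrix of $A$ against the two stated generators of $\Gamma$, and check independently that the resulting group genuinely has order $n^2$ — equivalently that the homomorphism $(\mathbb Z/n)^2\to\mu_n^4/\mu_n$ sending $(a,b)$ to $[\zeta^{2a}:\zeta^{a+b}:\zeta^{3b}:1]$ is injective, which comes down to $\gcd(2,3)=1$. The rest is formal manipulation of monomial maps and determinants.
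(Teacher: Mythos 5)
Your outline reconstructs Ulmer's Delsarte-surface argument, which is exactly what the paper's own proof does — the paper simply cites sections 3--4 of [29]. So the route is the same: homogenise the Weierstrass equation to a four-monomial hypersurface, read off the exponent matrix $A$, and apply Shioda's recipe to exhibit the surface as a quotient of $F_n$ by a diagonal group. Three points deserve attention before this could pass as a complete proof.

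First, the relation you want is $AB=nI$, i.e.\ $B=nA^{-1}=-\tfrac{1}{n}\mathrm{adj}(A)$, not $BA^{\mathrm t}=nI$. With your $B=n(A^{-1})^{\mathrm t}$ the row sums come out to $3n-9$, $-5n+18$, $2n-6$, $1$, which are unequal (so $\phi_B$ would not even descend to $\mathbb P^3$); with $B=nA^{-1}$ they are all $1$, as you later assert. This is just a transposition slip, but it matters for carrying out the deck-group computation. With the corrected $B$ one finds that the deck group is cut out inside $\mu_n^4/\mu_n$ by the single condition $\zeta_0^3\zeta_1^{-6}\zeta_2^2\zeta_3=1$, which is exactly $\ker(3,-6,2,1)=(\Gamma^\perp)^\perp=\Gamma$ in the notation of Definition 8.7, so the matching step you flagged does go through.

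Second, the defining polynomial of $S$ has coefficients $1,1,-1,1$, so its pullback under $\phi_B$ is $x_0^n+x_1^n-x_2^n+x_3^n$, not the Fermat equation. Since $n\mid p-1$ does not force $2n\mid p-1$, this sign cannot always be absorbed over $\mathbb F_q$ by a diagonal rescaling, so $\phi_B^{-1}(S)$ is a priori a twist of $F_n$. The saving observation is that the relevant cocycle sends Frobenius to $[1,1,-1,1]$ (using $n$ even so $-1\in\mu_n$), and one checks $\langle(3,-6,2,1),[1,1,-1,1]\rangle=1$, so $[1,1,-1,1]\in\Gamma$; hence the twist dies after passing to the quotient by $\Gamma$, and one does land on $F_n/\Gamma$ over $\mathbb F_q$. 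You should include this step, since without it the statement is only established over $\overline{\mathbb F}_p$.
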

\begin{proof} This claim is made in 4.3 of [29] on page 301. The proof can be found in sections 3 and 4 of [29] on pages 298--301.
\end{proof}
\begin{notn} Let $\mathbb O_p$ denote the ring of integers of $\overline{\mathbb Q}_p$ and let $\mathfrak p$ be its maximal ideal.  We view all finite fields of characteristic $p$ as subfields of $\mathbb O_p/\mathfrak p$, which is an algebraic closure of $\mathbb F_p$. Reduction modulo $\mathfrak p$ induces an isomorphism between the group of all roots of unity of order prime to $p$ in $\mathbb O_p$ and the multiplicative group of $\mathbb O_p/\mathfrak p$.  We let $\alpha:(\mathbb O_p/\mathfrak p)^*\rightarrow\overline{\mathbb Q}_p^*$ denote the inverse of this isomorphism.  We will use the same letter $\alpha$ for the restriction to any finite field $\mathbb F_q^*$.
\end{notn}
\begin{defn} Fix a nontrivial character $\psi_0:\mathbb F_p\rightarrow\overline{\mathbb Q}_p^*$ and for each finite extension $\mathbb F_{p^m}$ of $\mathbb F_p$, let $\psi:\mathbb F_{p^m}\rightarrow\overline{\mathbb Q}_p^*$ be defined by $\psi=\psi_0\circ\text{\rm Tr}_{\mathbb F_{p^m}|\mathbb F_p}$.  If $\chi:\mathbb F_{p^m}^*\rightarrow\overline{\mathbb Q}^*_p$ is a nontrivial character, we define the corresponding Gauss sum by
$$g(\chi,\psi)=-\sum_{x\in\mathbb F_{p^m}^*}\chi(x)\psi(x).$$
If $\chi_1,\ldots,\chi_r$ are characters $\mathbb F_{p^m}^*\rightarrow\overline{\mathbb Q}^*_p$, not all trivial, such that the product $\chi_1\cdots\chi_r$ is trivial, we define the Jacobi sum $J(\chi_1,\dots,\chi_r)$ by
$$J(\chi_1,\dots,\chi_r)=\begin{array}{ll}
\frac{(-1)^r}{p^m}
\prod_{i=1}^r g(\chi_i,\psi),&\text{if all $\chi_i$ are nontrivial,}\\
0,&\text{otherwise.}\end{array}$$
\end{defn}
\begin{thm} {\bf (Stickelberger)} For any $1\leq k\leq p^m-2$ we have:
$$v_p(g(\alpha^{-k},\psi))=s(k)/(p-1),$$
where if $k=k_0+pk_1+\dots+p^{m-1}k_{m-1}$ is the $p$-adic expansion of the integer $k$, we define $s(k)=k_0+k_1+\dots+k_{m-1}$.
\end{thm}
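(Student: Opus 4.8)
The plan is to use Dwork's theory of the splitting function to rewrite the Gauss sum as an explicit power series in a uniformiser $\varpi\in\mathbb O_p$ with $v_p(\varpi)=1/(p-1)$, and then to read off its term of smallest valuation, which turns out to be indexed by the base-$p$ digits of $k$. Alternatively one could simply quote the Gross--Koblitz formula, which writes $g(\alpha^{-k},\psi)$ as $\pm\varpi^{s(k)}$ times a product of values of the $p$-adic Gamma function, all of them $p$-adic units, whence the assertion is immediate; but I will sketch the more self-contained route.

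First I would remove the dependence on $\psi_0$: replacing $\psi_0$ by $x\mapsto\psi_0(ax)$ with $a\in\mathbb F_p^*$ multiplies $g(\alpha^{-k},\psi)$ by the root of unity $\alpha^{-k}(a)$, hence leaves $v_p$ unchanged, so I may normalise $\psi_0$ to be the additive character for which Dwork's splitting function $\theta(X)=\sum_{r\ge0}\theta_rX^r$ satisfies the splitting identity
$$\psi(t)=\prod_{j=0}^{m-1}\theta\bigl(\alpha(t)^{p^j}\bigr)\qquad(t\in\mathbb F_q^*),$$
where $\alpha(t)\in\mathbb O_p$ is the Teichm\"uller representative of $t$. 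The facts about $\theta$ that I would invoke (Dwork's lemma, and the computation of the low-order coefficients) are that it converges on the closed unit disc, that $v_p(\theta_r)\ge r/(p-1)$ for all $r$, and that $\theta_r=\varpi^r/r!$ for $0\le r\le p-1$, so that $v_p(\theta_r)=r/(p-1)$ exactly in that range.

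Substituting the splitting identity into $g(\alpha^{-k},\psi)=-\sum_{t\in\mathbb F_q^*}\alpha^{-k}(t)\psi(t)$, expanding each factor $\theta$ as a power series in $\alpha(t)$, interchanging the (ultrametrically absolutely convergent) sums, and using $\sum_{u\in\mu_{q-1}}u^{N}=q-1$ or $0$ according as $(q-1)\mid N$ or not, I obtain
$$g(\alpha^{-k},\psi)=-(q-1)\sum_{\substack{(r_0,\dots,r_{m-1})\in\mathbb Z_{\ge0}^m\\ \sum_j r_j p^j\,\equiv\,k\ (\mathrm{mod}\ q-1)}}\ \prod_{j=0}^{m-1}\theta_{r_j}.$$
Since $v_p(q-1)=0$, the problem reduces to locating the term of minimal valuation. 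As $v_p\bigl(\prod_j\theta_{r_j}\bigr)\ge\tfrac1{p-1}\sum_j r_j$, the crux is the elementary claim that among all admissible index tuples $\sum_j r_j$ is minimal --- with value exactly $s(k)$ --- precisely for the tuple $(k_0,\dots,k_{m-1})$ of base-$p$ digits of $k$, every other admissible tuple satisfying $\sum_j r_j\ge s(k)+1$. I would prove this by repeatedly performing base-$p$ carries, the carry out of position $m-1$ wrapping around into position $0$ (legitimate because $p^m\equiv1\pmod{q-1}$); each carry strictly decreases $\sum_j r_j$, and the procedure terminates at the unique tuple with all entries $<p$, which --- since $1\le k\le q-2$ --- can only be $(k_j)$. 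As $\theta_{k_j}=\varpi^{k_j}/k_j!$ has valuation exactly $k_j/(p-1)$, the tuple $(k_j)$ contributes a term of valuation exactly $s(k)/(p-1)$, every other term has strictly larger valuation, so there is no cancellation at the bottom and $v_p(g(\alpha^{-k},\psi))=s(k)/(p-1)$.

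The main obstacle is the splitting identity together with Dwork's lemma on the convergence and integrality of $\theta$: these are exactly the non-trivial part of Dwork's construction, the subtlety being that the naive manipulations of $\theta$ on the unit circle fail, and I would cite their proofs rather than redo them. Everything else --- the normalisation of $\psi_0$, the character orthogonality, and the carrying argument --- is routine, although the wrap-around of carries modulo $q-1$ deserves care. As a sanity check, the formula respects the functional equation $g(\alpha^{-k},\psi)\,g(\alpha^{-(q-1-k)},\psi)=\pm q$, since $s(k)+s(q-1-k)=m(p-1)$.
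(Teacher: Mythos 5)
Your overall architecture (Dwork splitting, Fourier-analytic interchange of sums, character orthogonality, and the base-$p$ carrying argument to isolate the digit tuple) is the right skeleton, and the Gross--Koblitz alternative you mention would indeed close the proof instantly; the paper itself offers no proof at all, merely citing Theorem~9 of Lang's \emph{Algebraic Number Theory}, so there is nothing in the source to compare against.

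However, there is a genuine gap at the single quantitative input you rely on. You assert that $v_p(\theta_r)\ge r/(p-1)$ for \emph{all} $r\ge 0$, and the whole conclusion (``every other term has strictly larger valuation, so there is no cancellation'') rests on multiplying this bound over the $m$ coordinates. That inequality is false once $r\ge p$. Take $p=2$, so $\varpi=-2$ and $\theta(X)=\exp(-2X+2X^2)$: one computes directly
\[
\theta_2=4,\quad \theta_3=-\tfrac{16}{3},\quad \theta_4=\tfrac{20}{3},
\]
so $v_2(\theta_4)=2$ while $r/(p-1)=4$. Dwork's lemma, which you cite, gives integrality of the $\theta_r$ (they lie in $\mathbb Z_p[\varpi]$) and convergence on the closed unit disc --- and even a bit beyond, the radius being $p^{(p-1)/p^2}$ --- but that radius is \emph{strictly smaller} than $p^{1/(p-1)}$, which is exactly what tells you that $v_p(\theta_r)\ge r/(p-1)$ cannot hold in general. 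With the correct uniform lower bound $v_p(\theta_r)\ge r(p-1)/p^2$, the inequality $\sum_j v_p(\theta_{r_j}) > s(k)/(p-1)$ for non-digit admissible tuples does not follow from $\sum_j r_j\ge s(k)+1$ (take $s(k)$ of size comparable to $m(p-1)$ and the bound fails by a factor near $(p/(p-1))^2$). So the dominance of the digit term is not established, and the possibility of cancellation at the bottom valuation is not ruled out. The carrying argument itself, the normalisation of $\psi_0$, the orthogonality step, and the computation $\theta_r=\varpi^r/r!$ for $r\le p-1$ are all correct.

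To repair the argument one needs a finer input than a single power-of-$r$ lower bound on $v_p(\theta_r)$: the standard treatments (Lang, \emph{Cyclotomic Fields II}, Ch.~15; Robert, \emph{A Course in $p$-adic Analysis}, \S 7.2) either truncate $\theta$ and control the tail with a congruence that exploits the precise vanishing $\theta_r\equiv\varpi^r/r!\pmod{\varpi^p}$ in low degree, or simply prove Gross--Koblitz, which gives $g(\alpha^{-k},\psi)=-\varpi^{s(k)}\prod_j\Gamma_p(\langle p^jk/(q-1)\rangle)$ with all $\Gamma_p$-values $p$-adic units. Alternatively, and closer to the reference the paper cites, one can avoid Dwork's theta entirely and give the classical elementary argument: one shows $g(\alpha^{-1},\psi)\equiv -\varpi\pmod{\varpi^2}$ by a direct congruence, then builds up $g(\alpha^{-k},\psi)$ from Jacobi sums (which are algebraic integers) via $g(\chi)g(\chi')=J(\chi,\chi')\,g(\chi\chi')$ and $g(\chi^p,\psi)=g(\chi,\psi)$. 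Either route is fine, but as written your bound on the coefficients of $\theta$ is wrong and the proof does not close.
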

\begin{proof} This is the second claim of Theorem 9 of [12] on pages 94--95.
\end{proof}
\begin{defn} Let $G$ be the group which we introduced in Definition 8.2.  Let $\widehat  G$ denote the group of characters $G$
with values in $\overline{\mathbb Q}_p$. Using the character
$\alpha:(\mathbb O_p/\mathfrak p)^*\rightarrow\overline{\mathbb Q}^*_p$ we can identify $\widehat  G$ with
$$\left\{a=(a_0,a_1,a_2,a_3)\in (\mathbb Z/n\mathbb Z)^4\left|\sum a_i=
0\right.\right\}$$
where the duality pairing
$G\times\widehat  G\rightarrow\overline{\mathbb Q}^*_p$ is
$$a(z)=
\left\langle(a_0,a_1,a_2,a_3),[\zeta_0,\zeta_1,\zeta_2,\zeta_3]\right\rangle
=\prod_{i=0}^3\alpha(\zeta_i)^{a_i}.$$
For every $a\in\widehat  G$ we let $u(a)$ denote the smallest positive integer such that $q^{u(a)}a=a$. For every non-zero $a=(a_0,a_1,a_2,a_3)\in\widehat  G$ we define the Jacobi sum $J(a)$ as follows: let $\chi_i:\mathbb F_{q^{u(a)}}^*\rightarrow\overline{\mathbb Q}_p$ be defined as $\chi_i=\alpha^{\frac{q^{u(a)}-1}{d}a_i}$ and set $J(a)=J(\chi_0,\dots,\chi_3)$.  Note that $J(qa)=J(a)$.  By convention, we set $J(0)=q$. Let $\Gamma^\bot\subset\widehat  G$ be the cyclic subgroup of order $n$ generated by $(3,-6,2,1)$ and let
$$\widehat G'=\{a=(a_0,\dots,a_3)\in\widehat G\ |\ a=0\text{ or }
a_i\neq0\text{ for }i=0,\dots3\}.$$
\end{defn}
As in Notation 6.6 for every Spec$(\mathbb F_q)$-scheme $S$ let $\overline S$ denote again its base change to Spec$(\overline{\mathbb F}_p)$ and let $F:\overline S\rightarrow\overline S$ denote the Frobenius relative to $\mathbb F_q$ for every such $S$. Choose a prime $l\neq p$ and fix an isomorphism $\nu:\overline{\mathbb Q}_l\rightarrow\overline{\mathbb Q}_p$. We will identify $\overline{\mathbb Q}_l$ with $\overline{\mathbb Q}_p$ via $\nu$.
\begin{thm} Let $A_1,\dots,A_k$ be the orbits of multiplication by $q$ on $\Gamma^\bot\cap\widehat  G'$ and choose $a_i\in A_i$.  Then
$$\det\left(1-F^*\,t|H^2(\overline{F_n/\Gamma},\overline{\mathbb Q}_l)\right)=\prod_{i=1}^k(1-J(a_i)t^{u(a_i)}).$$
\end{thm}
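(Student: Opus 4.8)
The plan is to reduce the computation to the classical description of the $\ell$-adic cohomology of the Fermat surface as a $G$-module equipped with its Frobenius action, and then to pass to $\Gamma$-invariants. Write $F^*$ for the geometric Frobenius acting on $\overline{\mathbb Q}_l$-cohomology. First I would observe that, since $p\nmid n$, the order of $\Gamma$ is prime to $p$ and the quotient morphism $\overline{F_n}\to\overline{F_n/\Gamma}$ is finite; hence the averaging idempotent $|\Gamma|^{-1}\sum_{\gamma\in\Gamma}\gamma$ identifies the constant sheaf on $\overline{F_n/\Gamma}$ with the $\Gamma$-invariants of the pushforward of the constant sheaf from $\overline{F_n}$, and the Leray spectral sequence of this finite map collapses. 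This gives a Frobenius-equivariant identification $H^2(\overline{F_n/\Gamma},\overline{\mathbb Q}_l)=H^2(\overline{F_n},\overline{\mathbb Q}_l)^\Gamma$ (all the morphisms in sight being defined over $\mathbb F_q$); that $F_n/\Gamma$ is possibly singular is immaterial, since the same argument computes the $\ell$-adic cohomology of the quotient in any case.

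Next I would invoke the eigenspace decomposition of $H^2(\overline{F_n},\overline{\mathbb Q}_l)$ under the $G$-action, as worked out in sections 3 and 4 of [29]. Writing $H^2(\overline{F_n},\overline{\mathbb Q}_l)=\bigoplus_{a\in\widehat G}H^2_a$ for this decomposition, one has: $H^2_0$ is one-dimensional, spanned by the hyperplane class, with $F^*$ acting on it by $q$; $H^2_a$ is one-dimensional for every $a$ all of whose coordinates are nonzero; $H^2_a=0$ for all other $a$; and $F^*$ carries $H^2_a$ isomorphically onto $H^2_{qa}$. A dimension count ($\#\widehat G'=b_2(F_n)$) confirms there is nothing else, which is why only $\widehat G'$, and not all of $\widehat G$, will appear in the final answer. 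Taking $\Gamma$-invariants retains precisely the summands with $a\in\Gamma^\bot$, so combining with the previous paragraph
$$H^2(\overline{F_n/\Gamma},\overline{\mathbb Q}_l)=\bigoplus_{a\in\Gamma^\bot\cap\widehat G'}H^2_a,$$
and the index set $\Gamma^\bot\cap\widehat G'$ is $F^*$-stable, decomposing into the $q$-orbits $A_1,\dots,A_k$.

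Then I would compute $F^*$ one orbit at a time. Fix $i$, put $u=u(a_i)$, so $A_i=\{a_i,qa_i,\dots,q^{u-1}a_i\}$. Since $F^*$ cyclically permutes the one-dimensional spaces $H^2_{a_i}\to H^2_{qa_i}\to\dots\to H^2_{q^{u-1}a_i}\to H^2_{a_i}$, its eigenvalues on $\bigoplus_{a\in A_i}H^2_a$ are the $u$-th roots of the scalar $c_i$ by which $(F^*)^u$ acts on $H^2_{a_i}$, so the corresponding factor of $\det(1-F^*t\mid H^2(\overline{F_n/\Gamma},\overline{\mathbb Q}_l))$ is $1-c_i\,t^u$. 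It remains to identify $c_i=J(a_i)$: over the extension $\mathbb F_{q^{u}}$ the whole orbit is individually fixed, and the classical point count of the Fermat surface over $\mathbb F_{q^{u}}$ inserted into the Grothendieck--Lefschetz trace formula expresses the Frobenius scalar on $H^2_{a_i}$ as the Jacobi sum $J(\chi_0,\dots,\chi_3)$ with $\chi_j=\alpha^{((q^{u}-1)/n)a_{i,j}}$, which is exactly $J(a_i)$ of Definition 8.7 (the normalisation there, including the convention $J(0)=q$ for the hyperplane class, is arranged precisely so this matches). Multiplying the factors over $i=1,\dots,k$ gives the stated identity.

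The hard part will be the bookkeeping of normalisations, not the structure of the argument. One must fix the direction of the induced $q$-action on $\widehat G$ (whether $F^*$ sends $H^2_a$ to $H^2_{qa}$ or to $H^2_{q^{-1}a}$ --- harmless for the orbit decomposition because $J(qa)=J(a)$, but it affects the naive identification of $c_i$), keep track of Tate twists, and reconcile the conventions folded into the Jacobi sum: the chosen additive character $\psi_0$, the field $\mathbb F_{q^u}$ over which the sum is formed, the power of $\alpha$ defining $\chi_j$, and the $(-1)^rp^{-m}$ normalisation. Getting $c_i$ to come out as $J(a_i)$ rather than a Jacobi sum attached to the conjugate or contragredient character --- and, separately, checking that the eigenspaces $H^2_a$ with $a\in\Gamma^\bot$ whose coordinates are not all nonzero genuinely vanish --- is where the care lies.
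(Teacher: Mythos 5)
Your sketch is mathematically sound, but it is worth pointing out that the paper does not actually prove this statement: the proof given there is the single sentence ``This is Corollary 7.7 of [29] on page 310,'' i.e.\ the theorem is quoted verbatim from Ulmer's paper. What you have written is, in effect, a reconstruction of Ulmer's own argument rather than a different proof from the one in this paper.

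That said, the reconstruction is the right one. The three main steps --- (i) identifying $H^2(\overline{F_n/\Gamma},\overline{\mathbb Q}_l)$ with $H^2(\overline{F_n},\overline{\mathbb Q}_l)^\Gamma$ via the averaging idempotent (valid since $|\Gamma|$ divides $n^3$ and $p\nmid n$, and since the $\overline{\mathbb Q}_l$-cohomology of a quotient by a finite group always computes invariants, singularities notwithstanding); (ii) the Weil eigenspace decomposition of the Fermat surface's $H^2$ into one-dimensional character spaces $H^2_a$ indexed by $a\in\widehat G'$, with Frobenius permuting them along $a\mapsto qa$; and (iii) extracting the Jacobi sum $J(a_i)$ as the scalar by which $(F^*)^{u(a_i)}$ acts on each orbit, via the Grothendieck--Lefschetz trace formula and Weil's point count for diagonal hypersurfaces --- are exactly the ingredients Ulmer assembles in \S\S3--4 and 7 of [29]. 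Your closing remarks about orientation of the $q$-action, Tate twists, and the Jacobi-sum normalization (in particular the convention $J(0)=q$ absorbing the hyperplane class) correctly identify where the bookkeeping lives. If you want a proof that can stand alone you would need to fill in the eigenspace decomposition of the Fermat cohomology (Weil, or Katz's exposition) and the point-count identity giving the Jacobi sum, but the architecture is right and matches the source being cited.
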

\begin{proof} This is Corollary 7.7 of [29] on page 310.
\end{proof}
\begin{defn} As in Notation 3.1 pick an isomorphism $\iota:\overline{\mathbb Q}_l\rightarrow\mathbb C$ and  identify $\overline{\mathbb Q}_l$ with $\mathbb C$ via $\iota$ in all that follows. Let $|\cdot|_{\infty}$ denote the usual archimedean absolute value on $\mathbb C$. For every $\alpha\in\mathbb R$, finite dimensional $\overline{\mathbb Q}_l$-vector space $V$ and $\overline{\mathbb Q}_l$-linear endomorphism $\Psi:V\rightarrow V$ we say that the pair $(V,\Psi)$ has weights $\leq\alpha$ if for every eigenvalue $\lambda$ of $\Psi$ we have $|\lambda|_{\infty}\leq|q|^{\alpha}$. Moreover we say that $(V,\Psi)$ has slope $\alpha$ if for every $\lambda$ as above $v_q(\lambda)=\alpha$. For every $\alpha\in\mathbb R$, $k\in\mathbb N$ and scheme $X$ of finite type over Spec$(\mathbb F_q)$ we say that $X$ has  weights $\leq\alpha$ in dimension $k$ if the pair $(H^k(\overline X,\overline{\mathbb Q}_l),F^*)$ has weights $\leq\alpha$. Similarly we say that $X$ has slope $\alpha$ in dimension $k$ if the pair $(H^k(\overline X,\overline{\mathbb Q}_l),F^*)$ has slope $\alpha$.
\end{defn}
Now let $C$ be a curve over $\mathbb F_q$, i.e. a one-dimensional (but necessarily equidimensional) variety defined
over $\mathbb F_q$.
\begin{lemma} The following holds:
\begin{enumerate}
\item[$(i)$] the curve $C$ has weights $\leq1$ in dimension $1$,
\item[$(ii)$] the curve $C$ has slope $1$ in dimension $2$.
\end{enumerate}
\end{lemma}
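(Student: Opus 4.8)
The plan is to prove both parts by reducing to a smooth projective curve, where the action of $F^*$ on cohomology is completely understood. Throughout I use proper base change, the fact that finite morphisms have no higher étale direct images, and the insensitivity of étale cohomology to nilpotents (so I may take $C$ reduced); I shall not need $C$ to be connected or irreducible, since the constructions below reduce everything to cohomology of smooth projective curves and of zero-dimensional schemes, where the behaviour of $F^*$ is the same whether or not these are connected.

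For part $(i)$ I would argue as follows. Let $\nu\colon\widetilde C\to C$ be the normalisation, $Z\subset C$ the finite non-normal locus, $\widetilde Z=\nu^{-1}(Z)$, and $\nu'\colon\widetilde Z\to Z$ the induced map. First I write down, and check stalkwise, the conductor-square short exact sequence of étale sheaves $0\to\overline{\mathbb Q}_l\to\nu_*\overline{\mathbb Q}_l\oplus i_{Z*}\overline{\mathbb Q}_l\to i_{Z*}\nu'_*\overline{\mathbb Q}_l\to 0$ on $C$. Passing to cohomology over $\overline C$ and using that $Z$ and $\widetilde Z$ are zero-dimensional yields an exact sequence $H^0(\overline{\widetilde Z},\overline{\mathbb Q}_l)\to H^1(\overline C,\overline{\mathbb Q}_l)\to H^1(\overline{\widetilde C},\overline{\mathbb Q}_l)\to 0$, so the kernel of the second map is a subquotient of $H^0(\overline{\widetilde Z},\overline{\mathbb Q}_l)$, on which $F^*$ acts by a permutation matrix and hence has eigenvalues of archimedean absolute value $1$. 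Next I compactify $\widetilde C$ to a smooth projective curve $D$ with finite complement $S$, and use the Gysin sequence of $\widetilde C\hookrightarrow D$ to exhibit $H^1(\overline{\widetilde C},\overline{\mathbb Q}_l)$ as an extension of a subspace of $H^0(\overline S,\overline{\mathbb Q}_l)(-1)$ by $H^1(\overline D,\overline{\mathbb Q}_l)$. By Deligne, $H^1(\overline D,\overline{\mathbb Q}_l)$ is pure of weight one, so its $F^*$-eigenvalues have absolute value $q^{1/2}$, while those on $H^0(\overline S,\overline{\mathbb Q}_l)(-1)$ have absolute value $q$; since the eigenvalues of $F^*$ on an extension are the union of those on the sub and on the quotient, every $F^*$-eigenvalue on $H^1(\overline{\widetilde C},\overline{\mathbb Q}_l)$, hence on $H^1(\overline C,\overline{\mathbb Q}_l)$, has archimedean absolute value at most $q=|q|^1$, which is claim $(i)$.

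For part $(ii)$ the idea is that $H^2$ of a curve is a quotient of $H^2_c$, all of whose Frobenius eigenvalues are $q$ times roots of unity. Pick a dense open immersion $j\colon C\hookrightarrow C'$ into a proper curve with finite complement $Z$. The open-closed (excision) sequence for $j$, together with the vanishing of $H^j(\overline Z,\overline{\mathbb Q}_l)$ for $j\ge 1$, identifies $H^2_c(\overline C,\overline{\mathbb Q}_l)$ with $H^2(\overline{C'},\overline{\mathbb Q}_l)$; and for the proper curve $C'$ the cycle class map identifies the latter with $\overline{\mathbb Q}_l(-1)^{\oplus r}$, with $F^*$ permuting the summands according to the Galois action on the geometric irreducible components. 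Hence every $F^*$-eigenvalue on $H^2_c(\overline C,\overline{\mathbb Q}_l)$ is $q$ times a root of unity and so has $v_q$ equal to $1$. To descend to $H^2$ itself I use the distinguished triangle $j_!\overline{\mathbb Q}_l\to Rj_*\overline{\mathbb Q}_l\to Q$ on $\overline{C'}$: here $Q$ is supported on $Z$ and concentrated in cohomological degrees $0$ and $1$, because $R^qj_*\overline{\mathbb Q}_l$ vanishes for $q\ge 2$ (the punctured henselian neighbourhood of a point on a curve has $\overline{\mathbb Q}_l$-cohomological dimension one), and since $Z$ is zero-dimensional this forces $H^2(\overline{C'},Q)=0$; therefore the forgetful map $H^2_c(\overline C,\overline{\mathbb Q}_l)\to H^2(\overline C,\overline{\mathbb Q}_l)$ is surjective. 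As the eigenvalues of $F^*$ on a quotient are among those on the source, every $F^*$-eigenvalue on $H^2(\overline C,\overline{\mathbb Q}_l)$ has $v_q=1$, which is claim $(ii)$.

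The steps are all standard; the only point requiring care is bookkeeping — verifying the two sheaf sequences, tracking that the auxiliary schemes $Z,\widetilde Z,S$ are zero-dimensional so that their positive-degree cohomology vanishes, and matching the paper's normalisation (weight $\le\alpha$ meaning $|\lambda|_\infty\le|q|^\alpha=q^\alpha$) against the usual one, so that the extension structure found for $H^1$ of a smooth curve really does yield "weights $\le 1$". I do not foresee a genuine obstacle.
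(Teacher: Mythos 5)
Your argument is correct, but it follows a genuinely different route from the paper's. The paper chooses a single auxiliary object, a smooth dense open $U\subseteq C$ with finite complement $S$, and applies the localisation sequence
$0\to i_!\overline{\mathbb Q}_l\to\overline{\mathbb Q}_l\to j_{S*}\overline{\mathbb Q}_l\to 0$
on $C$ to the cohomology of $\overline C$. Since $j_{S*}\overline{\mathbb Q}_l$ is a sum of skyscrapers, this exhibits $H^1(\overline C,\overline{\mathbb Q}_l)$ as a quotient of $H^1_c(\overline U,\overline{\mathbb Q}_l)$ and $H^2(\overline C,\overline{\mathbb Q}_l)$ as isomorphic to $H^2_c(\overline U,\overline{\mathbb Q}_l)$; both parts then follow simultaneously from Deligne's weight bound for $H^1_c$ of the smooth variety $U$ and from Poincar\'e duality for $H^2_c$ of a smooth curve. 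You instead go outward rather than inward: in $(i)$ you pass through the normalisation $\widetilde C$ (conductor-square sequence) and then through a smooth compactification $D$ of $\widetilde C$ (Gysin sequence), reducing everything to $H^1$ of a smooth projective curve and $H^0$ of zero-dimensional schemes; in $(ii)$ you embed $C$ in a proper curve $C'$, identify $H^2_c(\overline C,\overline{\mathbb Q}_l)$ with $H^2(\overline{C'},\overline{\mathbb Q}_l)$ via the open-closed sequence, and then show the forgetful map $H^2_c(\overline C)\to H^2(\overline C)$ is surjective by analysing the cone of $j_!\overline{\mathbb Q}_l\to Rj_*\overline{\mathbb Q}_l$. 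Both approaches work. The paper's is shorter and treats the two parts with one sequence and one auxiliary scheme; yours is longer and uses several auxiliary schemes and a small derived-category digression in part $(ii)$, but it has the minor merit that all the weight inputs come from smooth \emph{projective} curves and finite schemes, so it leans only on Weil~I plus Gysin rather than on Deligne's bounds for $H^*_c$ of a non-proper smooth variety.
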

\begin{proof} Let $U$ be a smooth dense open subscheme of $C$ and let $i:U\rightarrow C$ be the inclusion map. Let $S$ be the reduced closed subscheme
of $C$ whose underlying set is the complement of $U$ and let $j:S\rightarrow C$ be the inclusion map. Then there is a cohomological long exact sequence:
$$\CD H^1(\overline C,i_!(\overline{\mathbb Q}_l))
@>\alpha>>H^1(\overline C,\overline{\mathbb Q}_l)
@>>> H^1(\overline C,j_*(\overline{\mathbb Q}_l))@>>> \cdots\endCD$$
$$\CD\cdots@>>>H^2(\overline C,i_!(\overline{\mathbb Q}_l))@>\beta>>
H^2(\overline C,\overline{\mathbb Q}_l)@>>> 
H^2(\overline C,j_*(\overline{\mathbb Q}_l)).\endCD$$
Because $j_*(\mathbb Q_l)$ is the direct sum of skyskaper sheaves, it is acyclic. Hence the map $\alpha$ is surjective and the map $\beta$ is an isomorphism. By the proper base change theorem $H^1_c(\overline U,\overline{\mathbb Q}_l)=
H^1(\overline C,i_!(\overline{\mathbb Q}_l))$ so the pair $(H^1(\overline C,
i_!(\overline{\mathbb Q}_l)),F^*)$ has weights $\leq1$ by Deligne's purity theorem. Similarly $H^2_c(\overline U,\overline{\mathbb Q}_l)=H^2(\overline C,i_!(\overline{\mathbb Q}_l))$ so the pair $(H^2(\overline C,i_!(\overline{\mathbb Q}_l),F^*)$ has slope $1$ by the duality theorem. Because the maps $\alpha$ and $\beta$ are $F^*$-equivariant the claims are now clear.
\end{proof}
\begin{notn} Now let $X$ be a surface over $\mathbb F_q$, i.e. a two-dimensional variety defined over $\mathbb F_q$. Let $U$ be a smooth dense open sub-scheme of $X$ and let $i:U\rightarrow X$ be the inclusion map. Then we have a map:
$$i_*:H^2_c(\overline U,\overline{\mathbb Q}_l)\longrightarrow
H^2(\overline X,\overline{\mathbb Q}_l)$$
which is the composition of the isomorphism $H^2_c(\overline U,\overline{\mathbb Q}_l)=H^2(\overline X,i_!(\overline{\mathbb Q}_l))$ furnished by proper base change and the homomorphism $H^2(\overline X,i_!(\overline{\mathbb Q}_l))\rightarrow H^2(\overline X,\overline{\mathbb Q}_l)$ induced by the inclusion
$i_!(\overline{\mathbb Q}_l)\subseteq\overline{\mathbb Q}_l$.
\end{notn}
\begin{lemma} The following holds:
\begin{enumerate}
\item[$(i)$] the pair $(\text{\rm Ker}(i_*),F^*)$ has weights $\leq1$,
\item[$(ii)$] the pair $(\text{\rm Coker}(i_*),F^*)$ has slope $1$.
\end{enumerate}
\end{lemma}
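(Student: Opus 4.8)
The plan is to imitate the excision argument used for curves in Lemma 8.10; the only genuinely new feature is that the boundary $S=X\setminus U$ is now a curve rather than a finite set, so that $H^1(\overline S,\overline{\mathbb Q}_l)$ and $H^2(\overline S,\overline{\mathbb Q}_l)$ no longer vanish but are instead controlled by Lemma 8.10 itself.

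First I would give $S$ its reduced scheme structure and let $j\colon S\rightarrow X$ be the resulting closed immersion. If $\dim S\leq 0$ then either $U=X$ or $S$ is finite, and in both cases $\operatorname{Ker}(i_*)$ and $\operatorname{Coker}(i_*)$ vanish, so we may assume $S$ is a curve (only its one-dimensional components will contribute below). On $\overline X$ one has the excision short exact sequence
$$0\longrightarrow i_!\overline{\mathbb Q}_l\longrightarrow\overline{\mathbb Q}_l\longrightarrow j_*\overline{\mathbb Q}_l\longrightarrow 0,$$
and, $j$ being a closed immersion, $j_*$ is exact, so $H^k(\overline X,j_*\overline{\mathbb Q}_l)=H^k(\overline S,\overline{\mathbb Q}_l)$ for every $k$. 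Combining the associated long exact cohomology sequence on $\overline X$ with the identification $H^k(\overline X,i_!\overline{\mathbb Q}_l)=H^k_c(\overline U,\overline{\mathbb Q}_l)$ recalled in Notation 8.11 produces the $F^*$-equivariant exact sequence
$$\CD H^1(\overline S,\overline{\mathbb Q}_l)@>>>H^2_c(\overline U,\overline{\mathbb Q}_l)@>i_*>>H^2(\overline X,\overline{\mathbb Q}_l)@>>>H^2(\overline S,\overline{\mathbb Q}_l).\endCD$$
Thus $\operatorname{Ker}(i_*)$ is exhibited as an $F^*$-stable quotient of $H^1(\overline S,\overline{\mathbb Q}_l)$, while $\operatorname{Coker}(i_*)$ is exhibited as an $F^*$-stable subspace of $H^2(\overline S,\overline{\mathbb Q}_l)$.

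It then remains to feed in Lemma 8.10 applied to the curve $S$: the pair $(H^1(\overline S,\overline{\mathbb Q}_l),F^*)$ has weights $\leq 1$ and the pair $(H^2(\overline S,\overline{\mathbb Q}_l),F^*)$ has slope $1$. Since the characteristic polynomial of $F^*$ on any $F^*$-stable subspace, and likewise on the corresponding quotient, divides the characteristic polynomial of $F^*$ on the ambient space, the multiset of eigenvalues can only shrink; hence the property of having weights $\leq 1$ is inherited by quotients and the property of having slope $1$ is inherited by subspaces. Applying this to the two statements just displayed gives $(i)$ and $(ii)$ respectively. I do not expect a genuine obstacle here: once Lemma 8.10 is in hand the argument is purely formal, and the only points demanding care are the identification $H^k(\overline X,i_!\overline{\mathbb Q}_l)\cong H^k_c(\overline U,\overline{\mathbb Q}_l)$ and the $F^*$-equivariance of every arrow in the excision sequence, both of which are standard and already used in the proof of Lemma 8.10.
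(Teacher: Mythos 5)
Your proof is correct and matches the paper's own argument almost line for line: both set up the excision sequence $0\to i_!\overline{\mathbb Q}_l\to\overline{\mathbb Q}_l\to j_*\overline{\mathbb Q}_l\to 0$, take the associated long exact sequence in cohomology on $\overline X$, identify $H^k(\overline X,j_*\overline{\mathbb Q}_l)$ with $H^k$ of the boundary curve, and then invoke Lemma 8.10 together with the stability of the weight/slope conditions under passing to $F^*$-stable quotients and subspaces. The only difference is your optional remark about the degenerate case $\dim S\leq 0$, which the paper simply builds into the setup of Notation 8.11.
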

\begin{proof} Let $C$ be the reduced closed subscheme whose underlying set is the complement of $U$ and let $j:C\rightarrow X$ be the inclusion map. Then $C$ is a curve and there is a cohomological long exact sequence:
$$\CD H^1(\overline X,j_*(\overline{\mathbb Q}_l))
\rightarrow H^2(\overline X,i_!(\overline{\mathbb Q}_l))\rightarrow 
H^2(\overline X,\overline{\mathbb Q}_l)
\rightarrow H^2(\overline X,j_*(\overline{\mathbb Q}_l)).\endCD$$
Because $H^1(\overline X,j_*(\overline{\mathbb Q}_l))=
H^1(\overline C,\overline{\mathbb Q}_l)$ and  $H^2(\overline X,j_*(\overline{\mathbb Q}_l))=H^2(\overline C,\overline{\mathbb Q}_l)$, the pair
$(H^1(\overline X,j_*(\overline{\mathbb Q}_l)),F^*)$ has weights $\leq1$ and the pair $(H^2(\overline X,j_*(\overline{\mathbb Q}_l)),F^*)$
has slope $1$ by Lemma 8.10. The claims are now clear.
\end{proof}
\begin{prop} Let $X_1$ and $X_2$ be two birationally equivalent geometrically irreducible projective surfaces over $\text{\rm Spec}(\mathbb F_q)$. Assume that both $X_1$ and $X_2$ are pure of weight $2$ in dimension $2$. Then
$$l_q(\det\left(1-F^*\,t|H^2(\overline X_1,\overline{\mathbb Q}_l)\right))=
l_q(\det\left(1-F^*\,t|H^2(\overline X_2,\overline{\mathbb Q}_l)\right)).$$
\end{prop}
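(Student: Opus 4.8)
The plan is to compare $H^2(\overline X_1,\overline{\mathbb Q}_l)$ and $H^2(\overline X_2,\overline{\mathbb Q}_l)$ through a common open subscheme, using that $l_q$ is additive under products of polynomials and annihilates the characteristic polynomial of any $F^*$-module of slope $1$. First I would observe that, since $X_1$ and $X_2$ are birationally equivalent and geometrically irreducible, there is an open subscheme $U$ which is smooth over $\mathbb F_q$ and embeds as a dense open subscheme in both $X_1$ and $X_2$: one intersects the open locus where the birational map is an isomorphism with the two smooth loci. Write $i_k:U\rightarrow X_k$ for the resulting open immersions and consider the maps $i_{k,*}:H^2_c(\overline U,\overline{\mathbb Q}_l)\rightarrow H^2(\overline X_k,\overline{\mathbb Q}_l)$ of Notation 8.11.

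The first step is to pass from $H^2(\overline X_k,\overline{\mathbb Q}_l)$ to the image of $i_{k,*}$. The short exact sequence of $F^*$-modules
$$0\longrightarrow\text{Im}(i_{k,*})\longrightarrow H^2(\overline X_k,\overline{\mathbb Q}_l)\longrightarrow\text{Coker}(i_{k,*})\longrightarrow0$$
gives $\det(1-F^*t\mid H^2(\overline X_k,\overline{\mathbb Q}_l))=\det(1-F^*t\mid\text{Im}(i_{k,*}))\cdot\det(1-F^*t\mid\text{Coker}(i_{k,*}))$. By Lemma 8.12 the pair $(\text{Coker}(i_{k,*}),F^*)$ has slope $1$, so every reciprocal root of its characteristic polynomial has $v_q$ equal to $1$ and contributes nothing to $l_q$; since $l_q$ is additive along products this yields $l_q(\det(1-F^*t\mid H^2(\overline X_k,\overline{\mathbb Q}_l)))=l_q(\det(1-F^*t\mid\text{Im}(i_{k,*})))$ for $k=1,2$.

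The second step is to show that $\text{Im}(i_{k,*})$, as an $F^*$-module, does not depend on $k$. Let $N\subseteq H^2_c(\overline U,\overline{\mathbb Q}_l)$ be the largest $F^*$-stable subspace on which $F^*$ acts with weights $\leq1$, which is well-defined by Deligne's purity theorem. By Lemma 8.12 we have $\text{Ker}(i_{k,*})\subseteq N$, and conversely $N/\text{Ker}(i_{k,*})$ injects into $H^2_c(\overline U,\overline{\mathbb Q}_l)/\text{Ker}(i_{k,*})=\text{Im}(i_{k,*})$; the latter is an $F^*$-stable subspace of $H^2(\overline X_k,\overline{\mathbb Q}_l)$, hence pure of weight $2$ by hypothesis, so a subspace of it that also has weights $\leq1$ must vanish. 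Thus $\text{Ker}(i_{k,*})=N$ and $\text{Im}(i_{k,*})\cong H^2_c(\overline U,\overline{\mathbb Q}_l)/N$ independently of $k$. Combining with the first step gives $l_q(\det(1-F^*t\mid H^2(\overline X_1,\overline{\mathbb Q}_l)))=l_q(\det(1-F^*t\mid H^2_c(\overline U,\overline{\mathbb Q}_l)/N))=l_q(\det(1-F^*t\mid H^2(\overline X_2,\overline{\mathbb Q}_l)))$, which is the assertion.

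The step I expect to be the main obstacle is the second one, namely pinning down $\text{Ker}(i_{k,*})$ precisely: Lemma 8.12 only bounds its weights, and it is the purity hypothesis on the $X_k$ that is really doing the work here, since it is what forces $\text{Ker}(i_{k,*})$ to exhaust the entire weight-$\leq1$ part $N$ of $H^2_c(\overline U,\overline{\mathbb Q}_l)$. Without purity the lower-weight summands of $H^2_c(\overline U,\overline{\mathbb Q}_l)$ could survive into $H^2(\overline X_1,\overline{\mathbb Q}_l)$ and $H^2(\overline X_2,\overline{\mathbb Q}_l)$ in different ways and alter the two values of $l_q$ (note that even weight-$0$ classes, which have $v_q=0$, contribute to $l_q$). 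A minor point to verify along the way is simply that the common open $U$ can indeed be chosen smooth and dense in both surfaces, so that Lemma 8.12 and Notation 8.11 apply verbatim on each side.
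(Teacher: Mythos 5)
Your proof is correct and follows essentially the same route as the paper's. You construct the same common smooth dense open $U$, use Lemma 8.12(ii) to reduce from $H^2(\overline X_k,\overline{\mathbb Q}_l)$ to $\mathrm{Im}(i_{k,*})$ (the slope-$1$ cokernel being invisible to $l_q$), and then use Lemma 8.12(i) together with the purity hypothesis to identify $\mathrm{Ker}(i_{k,*})$ with the maximal weight-$\leq1$ subspace of $H^2_c(\overline U,\overline{\mathbb Q}_l)$ for both $k$, exactly as in the paper (where this subspace is called $V$); the only differences are cosmetic, namely that you spell out the existence of $U$ and the injectivity/purity argument for $N/\mathrm{Ker}(i_{k,*})$ a bit more explicitly than the paper does.
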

\begin{proof} Let $U$ be a smooth, two-dimensional scheme over Spec$(\mathbb
F_q)$ such that there are open immersions $i_1:U\rightarrow X_1$ and $i_2:U\rightarrow X_1$. Then
$$l_q(\det\left(1-F^*\,t|\text{Im}(i_{k*}\right))=
l_q(\det\left(1-F^*\,t|H^2(\overline X_k,\overline{\mathbb Q}_l)\right))$$
for $k=1,2$, using the notation of 8.11, by part $(ii)$ of Lemma 8.12. Let $V$ denote the largest $F^*$-invariant $\overline{\mathbb Q}_l$-linear subspace of $H^2_c(\overline U,\overline{\mathbb Q}_l)$ which has weights $\leq1$. Because $H^2(\overline X_k,\overline{\mathbb Q}_l)$ is pure of weight $2$ we have $V\subseteq\text{Ker}(i_{k*})$ for $k=1,2$. But $V\supseteq\text{Ker}(i_{k*})$ by part $(i)$ of Lemma 8.12. Hence
$$\det\left(1-F^*\,t|\text{Im}(i_{1*})\right)=
\det\left(1-F^*\,t|H^2_c(\overline U,\overline{\mathbb Q}_l)/V\right)=
\det\left(1-F^*\,t|\text{Im}(i_{2*})\right),$$
so the claim is clear.
\end{proof}
Assume now that $q=p$. 
\begin{thm} Let $p$ be a prime number and let $n$ be a positive integer as above. Assume that $n|p-1$ and $6|n$. Then $E_n$ is not isotrivial and 
$$m(E_n)=\frac{n}{6}-1=\frac{1}{12}\deg(\Delta_{E_n})-1.$$
\end{thm}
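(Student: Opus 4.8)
The strategy is to sandwich $m(E_n)$ between matching upper and lower bounds. The upper bound is a direct application of Theorem 1.3, established above as Theorem 6.9: since $\mathcal C=\mathbb P^1_{\mathbb F_p}$ we have $\mathrm{Pic}_0(\mathcal C)(\mathbb F_p)=0$, so the hypothesis on $p$ is vacuous, while $g=0$ and $q=p$, that is, $d=1$; thus $m(E_n)\le\frac{1}{12}\deg(\Delta_{E_n})-1$. As $6\mid n$ and $n\mid p-1$ we have $p\equiv 1\pmod 6$, in particular $p\ge5$, so Definition 8.1 gives $\deg(\Delta_{E_n})=12\lceil n/6\rceil=2n$. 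This yields $m(E_n)\le n/6-1$ and simultaneously identifies $n/6-1$ with $\frac{1}{12}\deg(\Delta_{E_n})-1$.

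For the lower bound I would feed the trivial character $\alpha=1$ (which lies in $\mathbb X(\mathfrak n)$) into Proposition 4.5. By Deligne's purity theorem $\epsilon(1)=\pm q^{g-1}$, so $v_q(\epsilon(1))=g-1$ and Proposition 4.5 collapses to $m(E_n)\ge d\cdot l_q(L(E_n,t))=l_q(L(E_n,t))$. Hence it suffices to prove $l_q(L(E_n,t))=n/6-1$, and here Ulmer's computation of the $L$-function together with Stickelberger's theorem take over. First, by Lemma 6.8 for the trivial group---equivalently, by comparing $H^1$ of $R^1g_*\overline{\mathbb Q}_l$ with $H^2$ of the surface $\overline{\mathcal E_n}$ modulo the span of the Chern classes of the fibres and the zero section, whose Frobenius eigenvalues have slope $1$ and are therefore invisible to $l_q$---one has $l_q(L(E_n,t))=l_q(\det(1-F^*t\mid H^2(\overline{\mathcal E_n},\overline{\mathbb Q}_l)))$. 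The surface $\mathcal E_n$ is smooth projective, hence pure of weight $2$ in dimension $2$; and $F_n/\Gamma$, being the quotient of the smooth projective Fermat surface by the prime-to-$p$ group $\Gamma$ of order $n$, has $H^2(\overline{F_n/\Gamma},\overline{\mathbb Q}_l)\cong H^2(\overline{F_n},\overline{\mathbb Q}_l)^\Gamma$, which is pure of weight $2$ as well. Since $\mathcal E_n$ and $F_n/\Gamma$ are birational by Theorem 8.3, Proposition 8.13 gives $l_q(\det(1-F^*t\mid H^2(\overline{\mathcal E_n})))=l_q(\det(1-F^*t\mid H^2(\overline{F_n/\Gamma})))$. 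Finally, because $q=p$ and $n\mid p-1$, multiplication by $q$ acts as the identity on $\mathbb Z/n\mathbb Z$, hence on $\widehat G$, so every orbit $A_i$ in Theorem 8.7 is a singleton with $u(a_i)=1$, and that theorem reads $\det(1-F^*t\mid H^2(\overline{F_n/\Gamma}))=\prod_{a\in\Gamma^\bot\cap\widehat G'}(1-J(a)t)$.

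It remains to compute $v_q(J(a))=v_p(J(a))$ for each $a\in\Gamma^\bot\cap\widehat G'$ and to count those with $v_p(J(a))=0$ (those with $v_p(J(a))\ge2$ contribute nothing to $l_q$, and those with $v_p(J(a))=1$---including $a=0$, where $J(0)=q$---contribute $1-1=0$). Write $\Gamma^\bot=\{m(3,-6,2,1):0\le m<n\}$, put $n'=n/6$, and write $\langle x\rangle$ for the representative of a nonzero $x\in\mathbb Z/n\mathbb Z$ in $\{1,\dots,n-1\}$. One checks that $m(3,-6,2,1)$ lies in $\widehat G'$ exactly when $m=0$ or $n'\nmid m$, and parametrises the nonzero such elements by $m=bn'+r$ with $b\in\{0,\dots,5\}$ and $r\in\{1,\dots,n'-1\}$. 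For a nonzero $a\in\widehat G'$ all four coordinates are nonzero, so the associated characters $\chi_i=\alpha^{\frac{p-1}{n}a_i}$ of $\mathbb F_p^*$ are nontrivial; writing $\chi_i=\alpha^{-k_i}$ with $0\le k_i\le p-2$ forces $k_i=\frac{p-1}{n}(n-\langle a_i\rangle)$, so by Stickelberger's theorem (Theorem 8.6 with $m=1$, so that $s(k_i)=k_i$) and the definition of $J$,
$$v_p(J(a))=-1+\sum_{i=0}^3\frac{k_i}{p-1}=3-\frac{1}{n}\sum_{i=0}^3\langle a_i\rangle.$$
Since $\sum_i a_i\equiv0\pmod n$ and each $\langle a_i\rangle\in\{1,\dots,n-1\}$, the integer $\sum_i\langle a_i\rangle$ is a multiple of $n$ lying in $\{n,2n,3n\}$, so $v_p(J(a))\in\{0,1,2\}$. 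Using $\langle-6m\rangle=6(n'-r)$ and the elementary formulas for $\langle m\rangle,\langle 2m\rangle,\langle 3m\rangle$ in terms of $b$ and $r$ (the $r$-contributions cancel), a six-case check gives $\sum_i\langle a_i\rangle=n$ when $b=0$, $=2n$ when $b\in\{1,2,3,4\}$, and $=3n$ when $b=5$. Hence $v_p(J(a))=0$ for precisely the $n'-1$ elements with $b=5$, so $l_q(L(E_n,t))=n'-1=n/6-1$; combined with the upper bound this proves the theorem. (Together with Theorem 5.10 for the trivial group, which gives $\dim_{\mathbb F_p}H^2(\mathcal E_n,\mathcal O_{\mathcal E_n})=n/6-1$, this also shows $\mathcal E_n$ is ordinary in dimension $2$, so one could invoke Theorem 6.11 instead of Proposition 4.5.)

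The part that demands care---rather than a new idea---is this last round of bookkeeping: correctly describing $\Gamma^\bot\cap\widehat G'$, fixing the normalisations in the Gauss--Jacobi sum formalism (the exponent $\frac{p-1}{n}a_i$, the sign conventions, the factor $1/p$ in $J$) so that Stickelberger's digit-sum formula applies verbatim over $\mathbb F_p$, and carrying out the six cases for $\sum_i\langle a_i\rangle$ without slips. A secondary point worth spelling out is that $F_n/\Gamma$, although possibly singular, still has $H^2$ pure of weight $2$ so that Proposition 8.13 genuinely applies; this is the transfer isomorphism $H^2(\overline{F_n/\Gamma})\cong H^2(\overline{F_n})^\Gamma$, valid because $|\Gamma|=n$ is prime to both $l$ and $p$.
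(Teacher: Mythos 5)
Your argument is correct and follows the same route as the paper: upper bound from Theorem~6.9 via the discriminant computation in Definition~8.1, lower bound from Proposition~4.5 applied to the trivial character, with $l_q(L(E_n,t))$ evaluated through Theorem~8.3, Proposition~8.13, Theorem~8.8 and Stickelberger. The paper simply parametrises $\Gamma^\bot$ by $k\underline a$ with $\underline a=(-3,6,-2,-1)$ and only checks $v_p(J(k\underline a))=0$ for $k=1,\dots,n/6-1$, which suffices for the inequality $l_q\ge n/6-1$, whereas your six-case analysis establishes the exact value; and you obtain the weight-2 purity of $H^2(\overline{F_n/\Gamma})$ via the transfer isomorphism with $H^2(\overline{F_n})^\Gamma$, while the paper reads it off the Jacobi sums being Weil numbers of weight two---both are fine. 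One small caveat: your parenthetical inference from $l_q(\det(1-F^*t\mid H^2))=\dim H^2(\mathcal E_n,\mathcal O)$ to ordinariness in dimension $2$ is not a formal consequence of Katz's inequality alone (one can cook up symmetric Newton polygons above a Hodge polygon with the same $l_q$ that differ from it), so if you want to invoke Theorem~6.11 you would need to argue a bit more, e.g.\ using that the Newton slopes you computed are all integers together with Hodge symmetry for $\mathcal E_n$; as you already observe, Proposition~4.5 makes this unnecessary.
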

\begin{proof} Let $\underline a=(-3,6,-2,-1)\in\widehat G$. By our assumption for every $k=0,1,\ldots,n-1$ we have $k\underline a\in\widehat G'$ if and only if $k\neq n/6,n/3,n/2,2n/3$ or $5n/6$. Because of our assumption $n|p-1$ every orbit of multiplication by $p$ on $\Gamma^\bot\cap\widehat  G'$ consists of one element. Hence
\begin{equation}
\det\left(1-F^*\,t|H^2(\overline{F_n/\Gamma},\overline{\mathbb Q}_l)\right)=\!\!\!\!\!
\prod_{\substack{0\leq k\leq n-1\\ k\neq n/6,n/3,n/2,2n/3,5n/6}}
\!\!\!\!\!(1-J(k\underline a)t)
\end{equation}
by Theorem 8.8. Gauss sums are Weil numbers of weight $1$ hence the reciprocal roots of the polynomial in (8.14.1) above are Weil numbers of weight $2$. The surface $\mathcal E_n$ is smooth, so it is pure of weight $2$ in dimension $2$ by Deligne's purity theorem. Hence by Theorem 8.3 and Proposition 8.13 we have:
$$l_q(L(E_n,t))=
l_q(\det\left(1-F^*\,t|H^2(\overline{\mathcal E_n},\overline{\mathbb Q}_l)
\right))=
l_q(\det\left(1-F^*\,t|H^2(\overline{F_n/\Gamma},\overline{\mathbb Q}_l)\right)).$$
For every $k=1,2,\ldots,n/6-1$ we have:
\begin{eqnarray}
v_p(J(k\underline a))\!\!\!\!\!\!&=&\!\!\!\!
v_p\!\left(g(\alpha^{\frac{-3k(p-1)}{n}},\psi)
g(\alpha^{\frac{6k(p-1)}{n}-p+1},\psi)
g(\alpha^{\frac{-2k(p-1)}{n}},\psi)
g(\alpha^{\frac{-k(p-1)}{n}},\psi)\right)-1\nonumber\\
\!\!\!\!\!\!&=&\!\!\!\!\frac{3k}{n}+
1-\frac{6k}{n}+
\frac{2k}{n}+
\frac{k}{n}-1=0\nonumber
\end{eqnarray}
by Stickelberger's Theorem 8.6, since every exponent of $\alpha^{-1}$ in the equation above is a positive integer strictly less than $p$. Hence by Proposition 4.5 and the above:
$$m(E_n)\geq l_q(L(E_n,t))\geq\frac{n}{6}-1.$$
Because $6|p-1$ by our assumptions, we have $p\geq7$. Hence
$$m(E_n)\leq\frac{1}{12}\deg(\Delta_{E_n})-1=\frac{n}{6}-1$$
by Theorem 6.9 and by our remark at the end of Definition 8.1. The claim of the theorem above is now clear.
\end{proof}

\section{Strong Weil curves}

\begin{defn} Fix now a closed point $\infty$ of $\mathcal C$ and let $A$ denote the ring of rational functions on $\mathcal C$ regular away from $\infty$ as in the introduction. For any non-zero ideal $\mathfrak m$ of $A$ an irreducible affine algebraic curve $Y_0(\mathfrak m)$ is defined over $F$, the Drinfeld modular curve parameterising Drinfeld $A$-modules of rank two of generic characteristic with Hecke level $\mathfrak m$-structure. There is a unique non-singular projective curve $X_0(\mathfrak m)$ over $F$ which contains $Y_0(\mathfrak m)$ as a dense open subvariety. Let $J_0(\mathfrak m)$ denote the Jacobian of the curve $X_0(\mathfrak m)$. The ideals of $A$ and the effective divisors on $\mathcal C$ whose support does not contain $\infty$ are in a natural one to one correspondence, and we will not distinguish them in all that follows. Let $E$ be an elliptic curve defined over $F$ which has split multiplicative reduction at $\infty$. Then its conductor is of the form $\mathfrak m\infty$, where $\mathfrak m$ is an ideal of $A$. By the function field analogue of the Taniyama-Weil conjecture there is a non-constant map $\pi:X_0(\mathfrak m)\rightarrow E$ defined over $F$. For any map $h:X_0(\mathfrak m)\rightarrow C$, where $C$ is an elliptic curve, let $h_*:J_0(\mathfrak m)\rightarrow C$ and $h^*:C\rightarrow J_0(\mathfrak m)$ denote the maps induced by the Albanese and the Picard functoriality, respectively.
\end{defn}
\begin{thm} The following are equivalent:
\begin{enumerate}
\item[$(i)$] the kernel of the map $\pi_*:J_0(\mathfrak m)\rightarrow E$ is an abelian variety,
\item[$(ii)$] the map $\pi^*:E\rightarrow J_0(\mathfrak m)$ is a closed
immersion,
\item[$(iii)$] the degree of $\pi$ is minimal among all non-degenerate
maps $\rho:X_0(\mathfrak m)\rightarrow E'$, where $E'$ is any elliptic curve isogeneous to $E$ over $F$.
\end{enumerate}
\end{thm}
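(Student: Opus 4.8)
The plan is to transcribe Mazur's classical argument to the Drinfeld modular setting. Write $J=J_0(\mathfrak m)$ and $n=\deg(\pi)$. We use two standard facts about the functoriality of Jacobians, valid over any base field: first, $\pi_*\circ\pi^*=[n]$ on $E$; second, with respect to the canonical principal polarizations $\lambda_E\colon E\to E^\vee$ and $\lambda_J\colon J\to J^\vee$, the homomorphisms $\pi_*$ and $\pi^*$ are dual to one another, that is $\lambda_J^{-1}\circ\pi_*^\vee\circ\lambda_E=\pi^*$ and $\lambda_E^{-1}\circ(\pi^*)^\vee\circ\lambda_J=\pi_*$ (see Milne's article on Jacobian varieties). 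We will also use that $X_0(\mathfrak m)$ has an $F$-rational cusp, so that the Abel--Jacobi embedding $\iota\colon X_0(\mathfrak m)\to J$ is defined over $F$ and $\iota(X_0(\mathfrak m))$ generates $J$; consequently a non-constant morphism from $X_0(\mathfrak m)$ to an elliptic curve is determined by the induced homomorphism on $J$ up to a translation, while conversely every surjective homomorphism from $J$ to an elliptic curve arises as $h_*$ for such an $h$. Granting these, $(i)\Leftrightarrow(ii)$ is pure abelian-variety duality: if $\ker\pi_*$ is an abelian variety then $0\to\ker\pi_*\to J\to E\to 0$ is a short exact sequence of abelian varieties, and dualizing it and transporting along $\lambda_E,\lambda_J$ produces the short exact sequence $0\to E\to J\to(\ker\pi_*)^\vee\to 0$ whose first map is $\pi^*$, giving $(ii)$; conversely, if $\pi^*$ is a closed immersion then $0\to E\to J\to J/\pi^*(E)\to 0$ is a short exact sequence of abelian varieties, and dualizing and transporting turns it into $0\to(J/\pi^*(E))^\vee\to J\to E\to 0$ with second map $\pi_*$, so $\ker\pi_*\cong(J/\pi^*(E))^\vee$ is an abelian variety, giving $(i)$.

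For $(iii)\Rightarrow(i)$ I argue contrapositively. Assume $\ker\pi_*$ is not connected and put $C=(\ker\pi_*)^0$, an abelian subvariety with $\dim(J/C)=\dim(J/\ker\pi_*)=1$; thus $B:=J/C$ is an elliptic curve, the quotient map $\psi\colon J\to B$ has connected kernel $C$, and $\pi_*$ factors as $J\to B\to E$ with the second map $\alpha$ an isogeny whose kernel is the non-trivial finite group scheme $\ker\pi_*/C$, so $\deg(\alpha)\geq 2$. The composite $f:=\psi\circ\iota\colon X_0(\mathfrak m)\to B$ is non-constant with $f_*=\psi$, and $\alpha\circ f$ induces the same homomorphism on $J$ as $\pi$, hence agrees with $\pi$ up to a translation of $E$; therefore $\deg(\pi)=\deg(\alpha)\cdot\deg(f)>\deg(f)$. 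Since $B$ is isogenous to $E$ this contradicts the minimality asserted in $(iii)$.

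For $(i)\Rightarrow(iii)$ the extra input is multiplicity one. Since $E$ is non-isotrivial, $\operatorname{End}(E)=\mathbb Z$, and since $E$ has conductor exactly $\mathfrak m\infty$ and split multiplicative reduction at $\infty$, its automorphic representation occurs in $J$ with multiplicity one, so the corresponding isotypic part of $J$ is isogenous to $E$ and $\operatorname{Hom}(J,E')\otimes\mathbb Q\cong\mathbb Q$ for every elliptic curve $E'$ isogenous to $E$. Hence any two surjective homomorphisms $J\to E'$ differ by a rational scalar, so their kernels have the same identity component. Given a non-constant $\rho\colon X_0(\mathfrak m)\to E'$ with $E'$ isogenous to $E$ and an isogeny $u\colon E'\to E$, applying this to $\pi_*$ and $u\circ\rho_*$, and using that $\ker\pi_*$ is connected, gives $\ker\pi_*=(\ker(u\circ\rho_*))^0\subseteq\ker\rho_*$; therefore $\rho_*$ factors through $\pi_*$ via an isogeny $\gamma\colon E\to E'$. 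Comparing induced homomorphisms once more, $\rho=\gamma\circ\pi$ up to a translation, whence $\deg(\rho)=\deg(\gamma)\cdot\deg(\pi)\geq\deg(\pi)$, which is $(iii)$.

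I expect the main obstacle to lie not in the homological bookkeeping above but in pinning down the geometric inputs in the function-field setting: the self-duality of $J_0(\mathfrak m)$, the existence of an $F$-rational cusp, and above all the function-field form of multiplicity one together with the fact that a non-isotrivial elliptic curve of conductor $\mathfrak m\infty$ with split multiplicative reduction at $\infty$ is cut out in $J_0(\mathfrak m)$ with multiplicity one — these rest on the work on Drinfeld modular curves (Gekeler--Reversat and the references cited in the paper). A secondary point needing care is that in characteristic $p$ the phrase ``abelian variety'' in $(i)$ should be read as ``smooth and connected''; since the modular parameterisation may be taken separable, $\ker\pi_*$ is automatically reduced, so the condition is genuinely connectedness and the argument goes through unchanged. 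Everything else is a formal transcription of the classical proof.
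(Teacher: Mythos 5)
Your proposal mirrors the paper's structure: both establish the equivalences by pairing multiplicity one for the Hecke action (to compare parameterisations factoring through $J_0(\mathfrak m)$) with duality of abelian varieties (to pass between $\pi_*$ and $\pi^*$). The paper does the duality step through a stand-alone Lemma~9.3 on arbitrary surjections of abelian varieties, proved by \emph{fppf} cohomology together with Grothendieck's identification of $\text{Ext}^1(C,\mathbb G_m)$ with the dual of $C$, rather than by directly dualising short exact sequences as you do; the two routes are interchangeable, although the \emph{fppf} formulation keeps the characteristic-$p$ subtleties more visibly under control. For $(i)\Rightarrow(iii)$ you also invoke multiplicity one, but the paper reaches the key inclusion $\ker\pi_*\subseteq\ker\rho_*$ more directly: since by multiplicity one the abelian variety $\ker\pi_*$ has no quotient isogenous to $E$, the image $\rho_*(\ker\pi_*)$ must vanish. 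Your route through ``the identity components agree'' implicitly needs that $\ker\pi_*$ is \emph{smooth}, not merely connected, in order to conclude that it maps trivially to the finite group scheme $\ker(u\circ\rho_*)/\ker\rho_*$; that is true here because $\ker\pi_*$ is assumed to be an abelian variety, but it is worth making explicit.

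The genuine gap is in $(iii)\Rightarrow(i)$. You argue contrapositively under the hypothesis ``$\ker\pi_*$ is not connected,'' but in characteristic $p$ the negation of $(i)$ is ``not smooth-and-connected,'' so you have not ruled out a connected but non-reduced kernel, and your identification of $(\ker\pi_*)^0$ with an abelian subvariety is likewise unjustified as written. Your attempted patch — that $\ker\pi_*$ is automatically reduced because the modular parameterisation may be taken separable — is not self-evident, imports a separability hypothesis the theorem does not assume, and in any case would require showing that a degree-minimal $\pi$ cannot factor through an inseparable isogeny, which is essentially part of what the theorem is trying to establish. The paper sidesteps all of this by taking $C$ to be the \emph{reduction} of the connected component of $\ker\pi_*$: this $C$ is unconditionally an abelian subvariety, $E':=J_0(\mathfrak m)/C$ is an elliptic curve isogenous to $E$, the corresponding $\rho\colon X_0(\mathfrak m)\to E'$ satisfies both $\deg\rho\geq\deg\pi$ by $(iii)$ and $\deg\pi\geq\deg\rho$ because $\pi$ factors through $\rho$, forcing $\ker\pi_*=C$. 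If you replace $(\ker\pi_*)^0$ by its reduced identity component throughout that step, your argument becomes correct and the unsupported claim about automatic reducedness can be dropped.
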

\begin{proof} This result is well known in the mathematical folklore,
but it is difficult to track down a proof (the standard reference, Lemme
3 of [15] on pages 282-283 only shows the equivalence of $(i)$ and
$(iii)$). Our excuse for giving a full proof other than the above is that
we consider the more delicate case of positive characteristic. First
assume that $(i)$ holds. By the multiplicity one theorem for the action of the Hecke algebra on $J_0(\mathfrak m)$ the abelian variety Ker$(\pi_*)$ has no quotient isogeneous to $E$, so for every non-degenerate map $\rho:X_0(\mathfrak m)\rightarrow E'$, where $E'$ is any elliptic curve isogeneous to $E$ over $F$, the kernel of $\rho_*$ must contain Ker$(\pi_*)$. Hence the map $\rho$ factors through $\pi$, in particular its degree is at least as big as the degree of $\pi$. On the other hand if $(iii)$ holds then Ker$(\pi_*)$ contains the reduction of the connected component of its identity element as a closed subgroup-scheme. The quotient  $E'$ of $J_0(\mathfrak m)$ by the latter is an elliptic curve isogeneous to $E$, hence the degree the corresponding map $\rho:X_0(\mathfrak m)\rightarrow E'$ is at least as big as $\deg(\pi)$. But $\pi$ factors through $\rho$, so they must be equal. Note that the map $\pi^*$ is just the dual of the morphism $\pi_*:J_0(\mathfrak m)\rightarrow E$ of the principally polarised abelian varieties. The equivalence of $(i)$ and $(ii)$ and therefore the theorem itself now follows from the lemma below.
\end{proof}
\begin{lemma} Let $\phi:A\rightarrow B$ be a surjective
homomorphism of abelian varieties and let $\phi^{\vee}:B^{\vee}\rightarrow
A^{\vee}$ be its dual. Then the following are equivalent:
\begin{enumerate}
\item[$(i)$] the kernel of $\phi$ is an abelian variety,
\item[$(ii)$] the map $\phi^{\vee}$ is a closed
immersion.
\end{enumerate}
\end{lemma}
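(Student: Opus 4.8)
The plan is to derive both implications from the standard structure theory of abelian varieties over an arbitrary field, applied here over $K=F$. The three inputs I will use are: (a) biduality, i.e. the canonical isomorphisms $A^{\vee\vee}\cong A$ and $B^{\vee\vee}\cong B$ under which $\phi^{\vee\vee}$ is identified with $\phi$; (b) exactness of the duality functor $A\mapsto A^\vee$, i.e. a short exact sequence $0\to A_1\to A_2\to A_3\to 0$ of abelian varieties (a closed immersion followed by a faithfully flat epimorphism whose cokernel is $A_3$) is carried to a short exact sequence $0\to A_3^\vee\to A_2^\vee\to A_1^\vee\to 0$; and (c) the elementary remark that a homomorphism of abelian varieties with trivial kernel is automatically a closed immersion, since its image is an abelian subvariety and the induced isogeny onto that image has trivial kernel, hence is an isomorphism. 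I also use that a group scheme of finite type over a field which is proper, smooth and connected is an abelian variety, and conversely. References for (a), (b), (c) are Mumford's \emph{Abelian Varieties}, \S 13--15, together with the standard extension of that material to non-algebraically-closed base fields.

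For $(i)\Rightarrow(ii)$ I would argue as follows. Assume $\ker\phi$ is an abelian variety. Since $\phi$ is surjective, $0\to\ker\phi\to A\xrightarrow{\phi}B\to 0$ is a short exact sequence of abelian varieties; applying the duality functor yields an exact sequence $0\to B^\vee\xrightarrow{\phi^\vee}A^\vee\to(\ker\phi)^\vee\to 0$. In particular $\phi^\vee$ has trivial kernel, so by (c) it is a closed immersion.

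For $(ii)\Rightarrow(i)$, assume $\phi^\vee\colon B^\vee\to A^\vee$ is a closed immersion. Then its image is an abelian subvariety of $A^\vee$ isomorphic to $B^\vee$, so the quotient $C=A^\vee/\phi^\vee(B^\vee)$ is again an abelian variety and $0\to B^\vee\xrightarrow{\phi^\vee}A^\vee\xrightarrow{\psi}C\to 0$ is a short exact sequence. Dualising and invoking biduality to identify $\phi^{\vee\vee}$ with $\phi$, one obtains a short exact sequence $0\to C^\vee\xrightarrow{\psi^\vee}A\xrightarrow{\phi}B\to 0$, whence $\ker\phi\cong C^\vee$ is an abelian variety.

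I do not expect a serious obstacle; the content is entirely formal once the duality dictionary is in place. The one point needing genuine care is that the base field $F$ is a function field of characteristic $p$, hence imperfect, so $\ker\phi$ can a priori be non-reduced --- which is precisely why $(i)$, read literally as ``$\ker\phi$ is an abelian variety'', is the substantive hypothesis rather than a mere dimension count. The duality argument above is robust to this because biduality and exactness of $A\mapsto A^\vee$ hold over an arbitrary field; a more hands-on attempt to prove $(ii)\Rightarrow(i)$ by factoring $\phi$ through $A\to A/(\ker\phi)^0_{\mathrm{red}}$ would run into the usual pathologies of reduced subgroup schemes over imperfect fields, so I would avoid it.
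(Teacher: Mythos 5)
Your proof is correct, and it takes a genuinely different route from the paper's. The paper works directly with the {\it fppf} $\text{Ext}$ long exact sequence attached to $0\to\text{Ker}(\phi)\to A\to B\to 0$, invokes Grothendieck's representability of $\text{Ext}^1(-,\mathbb G_m)$ by the dual abelian scheme together with the vanishing of $\text{Hom}(-,\mathbb G_m)$ for abelian schemes, and thereby identifies the kernel of $\phi^{\vee}$ with $\text{Hom}(\text{Ker}(\phi),\mathbb G_m)$; it then splits $\text{Ker}(\phi)$ into an abelian part $\text{Ker}(\phi)_0$ and a finite flat quotient $G$, and uses Cartier duality to conclude that the obstruction to $\phi^{\vee}$ being a closed immersion is precisely the triviality of $G$. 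You instead take the exactness of the duality functor on short exact sequences of abelian varieties as a black box (for $(i)\Rightarrow(ii)$ this is essentially a direct citation of the desired implication, and the standard proof of that input runs through the very same $\text{Ext}$ computation), and then for $(ii)\Rightarrow(i)$ you argue symmetrically by forming the quotient $C=A^{\vee}/\phi^{\vee}(B^{\vee})$ on the dual side and applying biduality to identify $\text{Ker}(\phi)$ with $C^{\vee}$. Your route is shorter, more modular, and --- as you correctly flag --- sidesteps the subtlety the paper flirts with when it introduces ``the reduced group scheme associated to the connected component of $\text{Ker}(\phi)$'': over the imperfect field $F$, the reduced subscheme of a group scheme need not be a subgroup scheme, and one needs an argument (or a theorem of the Conrad--Gabber--Prasad type, or Poincar\'e reducibility) to produce the abelian part $\text{Ker}(\phi)_0$ as a closed subgroup scheme over $F$ rather than over $\overline F$. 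What the paper's more hands-on approach buys in return is self-containedness and a transparent description of where the obstruction lives, namely in the Cartier dual of the finite flat part of the kernel, rather than citing the functorial exactness statement wholesale.
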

\begin{proof} This proof was explained to me by Laurent Fargues.
For any $S$-scheme $T$ let $T$ also denote the sheaf represented by $T$ on the {\it fppf} topology on $S$. Attached to the short exact
sequence
$$\CD0@>>>\text{Ker}(\phi)@>>> A@>\phi>> B@>>>0\endCD$$
of sheaves on the {\it fppf} topology there is a cohomological exact
sequence:
$$\CD\text{Hom}(A,\mathbb G_m)\rightarrow 
\text{Hom}(\text{Ker}(\phi),\mathbb G_m)@>>> 
\text{Ext}^1(B,\mathbb G_m)@>\phi^{\vee}>>\text{Ext}^1(A,\mathbb G_m).\endCD$$
By a theorem of Grothendieck for any abelian scheme $C$ the sheaf
$\text{Ext}^1(C,\mathbb G_m)$ is represented by the dual of $C$ and for any
morphism $\phi:A\rightarrow B$ of abelian varieties the induced map 
$\text{Ext}^1(B,\mathbb G_m)\rightarrow \text{Ext}^1(A,\mathbb G_m)$ is the dual of $\phi$ as the notation above indicates. Moreover the sheaf Hom$(C,\mathbb G_m)$ is trivial for any abelian scheme $C$ hence $\phi^{\vee}$ is an immersion if and only if Hom$(\text{Ker}(\phi),\mathbb G_m)$ is trivial. Let $\text{Ker}(\phi)_0$ denote the reduced group scheme associated to the connected component of Ker$(\phi)$, considered as a closed subgroup-scheme. It is an abelian subscheme of $\text{Ker}(\phi)$ such that the quotient
$G=\text{Ker}(\phi)/\text{Ker}(\phi)_0$ is a finite, flat group scheme. By
looking at the cohomological exact sequence attached to the short exact sequence
$$\CD0@>>>\text{Ker}(\phi)_0@>>>
\text{Ker}(\phi)@>>>G@>>>0\endCD$$
of sheaves on the {\it fppf} topology, we get that Hom$(\text{Ker}(\phi),\mathbb G_m)=\text{Hom}(G,\mathbb G_m)$. The sheaf
Hom$(G,\mathbb G_m)$ is represented by the Cartier dual of the group scheme $G$, so it is trivial if and only if $G$ is trivial.
\end{proof}
\begin{defn} If the equivalent conditions of the theorem
above hold then we say that $E$ is a strong Weil curve and the modular parameterisation $\pi:X_0(\mathfrak m)\rightarrow E$ is optimal. By the proof above it is clear that up to isomorphism $E$ is unique in its isogeny class and there is only one strong Weil map parameterising $E$. On the other hand by property $(i)$ the quotient of $J_0(\mathfrak m)$ by the reduced group scheme associated to the connected component of the kernel of the map $\pi_*:J_0(\mathfrak m)\rightarrow E$ induced by any modular parameterisation $\pi:X_0(\mathfrak m)\rightarrow E$ is a strong Weil curve. Hence there is a strong Weil curve in the isogeny class of every elliptic curve having split multiplicative reduction at $\infty$.
\end{defn}

\section{Applications to the degree conjecture}

\begin{defn} For any graph $G$ let $\mathcal V(G)$ and $\mathcal E(G)$ denote its set of vertices and edges, respectively. Let $R$ be a commutative group and let $G$ be a locally finite oriented graph. In this paper we will assume that every oriented graph $G$ is equipped with an involution $\overline{\cdot}:\mathcal E(G)\rightarrow\mathcal E(G)$ such that for each edge $e\in\mathcal E(G)$ the original and terminal vertices of the edge $\overline e\in\mathcal E (G)$ are the terminal and original vertices of $e$, respectively. The edge $\overline e$ is called the edge $e$ with reversed orientation. If for each edge $e\in\mathcal E(G)$ there is exactly one edge $\overline e\in\mathcal E (G)$ whose original and terminal vertices are the terminal and original vertices of $e$ then there is a unique involution of this type. The Bruhat-Tits tree $\mathcal T$ is such a graph. A function $\phi:\mathcal E(G)\rightarrow R$ is called a harmonic $R$-valued cochain, if it satisfies the following conditions:
\begin{enumerate}
\item[$(i)$] We have:
$$\phi(e)+\phi(\overline e)=0\text{\ }(\forall e\in\mathcal E(G)).$$
\item[$(ii)$] If for an edge $e$ we introduce the notation $o(e)$ and
$t(e)$ for its original and terminal vertex respectively,
$$\sum_{\substack{e\in\mathcal E(G)\\o(e)=v}}\phi(e)=0\text{\ }
(\forall v\in\mathcal V(G)).$$
\end{enumerate}
We denote by $H(G,R)$ the group of $R$-valued harmonic cochains on $G$. 
\end{defn}
\begin{defn} Let $Y\subset F^2$ be an $A$-lattice, that is, a projective $A$-submodule of rank two. Let $\Gamma(Y)$ denote the $F$-linear automorphisms of $F^2$ leaving $Y$ invariant and for every ideal $\mathfrak a \triangleleft A$ let $\Gamma(Y,\mathfrak a)\leq\Gamma(Y)$ denote the subgroup of those elements who induce the identity on the quotient $A$-module $Y/\mathfrak aY$. We say that a subgroup $\Gamma$ of $GL_2(F)$ is arithmetic if there is an $A$-lattice $Y$ and an ideal $\mathfrak a$ such that $\Gamma$ is contained in $\Gamma(Y)$ and it contains $\Gamma(Y,\mathfrak a)$. Let $\Gamma$ be an arithmetic subgroup of $GL_2(F)$ and let $F_{\infty}$ denote the completion of $F$ with respect to the valuation corresponding to $\infty$. As a subgroup of $GL_2(F_{\infty})$ the arithmetic group $\Gamma$ acts on the Bruhat-Tits tree associated to $PGL_2(F_{\infty})$ on the left, which we will denote by $\mathcal T$. For any abelian group $M$ let $H_!(\mathcal T,M)^{\Gamma}$ denote the group of those $\Gamma$-invariant, $M$-valued harmonic cochains on $\mathcal T$ which has finite support as a function on the edges of the quotient graph $\Gamma\backslash\mathcal T$.
\end{defn}
\begin{defn} Next we are going to define the first homology group $H_1(\Gamma\backslash\mathcal T,\mathbb Z)$. The latter has several descriptions. It may be defined as the first topological homology group of the CW-complex attached to $\Gamma\backslash\mathcal T$ with integral coefficients. It is also canonically isomorphic to the abelianization of the quotient $\Gamma^*=\Gamma/\Gamma_f$, where $\Gamma_f$ be the normal subgroup of $\Gamma$ generated by the elements of finite order. We will use a third, purely combinatorial description, since it is the most convenient for our purposes. Recall that a path on an oriented graph $G$ is a sequence of edges $e_1,e_2,\ldots,e_n\in\mathcal E(G)$ such that $t(e_i)=o(e_{i+1})$ for $i=1,2,\ldots,n-1$. The path is closed if the equality $t(e_n)=o(e_1)$ holds, too. For each edge $e\in\mathcal E(G)$ let $i_e:\mathcal E(G)\rightarrow\mathbb Z$ denote the unique function such that
$$i_e(f)=
\left\{\begin{array}{ll}+1,&\text{if $f=e$,}\\
-1,&\text{if $f=\overline e$,}\\
\quad\!\!0,&\text{otherwise.}\end{array}\right.$$
To any closed path $e_1,e_2,\ldots,e_n$ we associate the function
$\sum_{i=1}^ni_{e_i}$. We define $H_1(G,\mathbb Z)$ as the abelian group of
$\mathbb Z$-valued functions on $\mathcal E(G)$ generated by these functions.
Let us return to the special case $G=\Gamma\backslash\mathcal T$. Let
$z(\Gamma)$ denote the cardinality of the center of $\Gamma$ and
let $\Gamma_e$ be the stabiliser of the edge $e\in\mathcal E(\mathcal T)$ in $\Gamma$. Let us recall rapidly why $\Gamma_e$ is finite. Let
$v:GL_2(F_{\infty})\rightarrow\mathbb Z$ the composition of the determinant and the valuation, and let $GL_2(F_{\infty})_0$ denote its kernel. We claim that every arithmetic group $\Gamma$ lies in $GL_2(F_{\infty})_0$. Clearly it is enough to show this for $\Gamma(Y)$. The localisation of $Y$ at each prime of $A$ is a free module of rank two, so the determinant of every element of $\Gamma(Y)$ is a unit at each prime of $A$, so it is in fact a unit of $A$. The latter are constants, so they have valuation zero. On the other hand the stabilizator of $e$ in $GL_2(F_{\infty})_0$ is compact, so $\Gamma_e$ is finite as the intersection of a compact and a discrete group. We define 
$$j_{\Gamma}:H_1(\Gamma\backslash\mathcal T,\mathbb Z)\rightarrow
H_!(\mathcal T,\mathbb Z)^{\Gamma},$$
as the map $\phi\mapsto\phi^*$ given by the rule
$\phi^*(e)=|\Gamma_e|\phi(\widetilde e)/z(\Gamma)$, where
$\widetilde e$ is the image of the edge $e$ in $\mathcal
E(\Gamma\backslash\mathcal T)$. It is easy to see that the homomorphism is
well-defined, that is $\phi^*$ is indeed a harmonic cochain. 
\end{defn}
\begin{prop} The homomorphism is $j_{\Gamma}$ is injective with finite cokernel of exponent dividing $q^{\deg(\infty)}-1$. 
\end{prop}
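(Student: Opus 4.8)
The plan is to dispose of injectivity for free, to prove finiteness of the cokernel by exhibiting a rational inverse of $j_\Gamma$, and to establish the exponent bound by a local analysis of the edge stabilisers on the Bruhat--Tits tree $\mathcal T$.

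\textbf{Injectivity.} Since $\mathcal T$ is the tree of $PGL_2(F_\infty)$, the centre $Z(\Gamma)$ acts trivially on it, so $Z(\Gamma)\subseteq\Gamma_e$ for every edge $e$; hence $z(\Gamma)\mid|\Gamma_e|$ and $\phi^*(e)=\tfrac{|\Gamma_e|}{z(\Gamma)}\phi(\widetilde e)$ with $\tfrac{|\Gamma_e|}{z(\Gamma)}$ a positive integer. If $\phi^*=0$ then $\phi(\widetilde e)=0$ for every $e$, so $\phi$ is the zero function on $\mathcal E(\Gamma\backslash\mathcal T)$, i.e. $\phi=0$ in $H_1(\Gamma\backslash\mathcal T,\mathbb Z)$.

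\textbf{Finiteness of the cokernel.} I would show that $j_\Gamma$ becomes an isomorphism after $\otimes\,\mathbb Q$ by writing down its inverse. A $\Gamma$-invariant harmonic cochain $\psi$ descends to a function $\overline\psi$ on $\mathcal E(\Gamma\backslash\mathcal T)$; decomposing the edges issuing from a lift $v$ of a vertex $\widetilde v$ into $\Gamma_v$-orbits, of sizes $|\Gamma_v|/|\Gamma_e|$, the harmonicity relation $\sum_{o(f)=v}\psi(f)=0$ translates into the weighted relation $\sum_{o(\widetilde e)=\widetilde v}|\Gamma_e|^{-1}\overline\psi(\widetilde e)=0$, and this sets up a bijection between $\Gamma$-invariant harmonic cochains of finite support and such weighted cochains. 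Putting $\Phi(\psi)(\widetilde e)=\tfrac{z(\Gamma)}{|\Gamma_e|}\overline\psi(\widetilde e)\in\mathbb Q$ turns the weighted relation into the ordinary cycle relation, so $\Phi$ lands in $H_1(\Gamma\backslash\mathcal T,\mathbb Q)$; by construction $\Phi\circ j_\Gamma$ and $j_\Gamma\circ\Phi$ are the identity on the respective $\mathbb Q$-vector spaces. Both $H_1(\Gamma\backslash\mathcal T,\mathbb Z)$ and $H_!(\mathcal T,\mathbb Z)^\Gamma$ are finitely generated (the quotient graph has finitely many independent cycles once its finitely many cuspidal rays are deleted), so an isomorphism after $\otimes\,\mathbb Q$ forces $\mathrm{coker}(j_\Gamma)$ to be finite.

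\textbf{The exponent.} Composing with $\Phi$ identifies $\mathrm{coker}(j_\Gamma)$ with a subquotient of $\bigoplus_{\widetilde e}\mathbb Z/\bigl(|\Gamma_e|/z(\Gamma)\bigr)\mathbb Z$: concretely, $\psi\in H_!(\mathcal T,\mathbb Z)^\Gamma$ lies in the image of $j_\Gamma$ if and only if $\tfrac{|\Gamma_e|}{z(\Gamma)}\mid\overline\psi(\widetilde e)$ for all $\widetilde e$. Hence it suffices to prove that $\tfrac{|\Gamma_e|}{z(\Gamma)}$ divides $(q^{\deg(\infty)}-1)\,\overline\psi(\widetilde e)$ for every harmonic cochain $\psi$ of finite support and every edge $e$. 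Write $\tfrac{|\Gamma_e|}{z(\Gamma)}=m_e\,p^{a_e}$ with $p\nmid m_e$. The prime-to-$p$ part $m_e$ is controlled by the structure of $\Gamma_e$: modulo the pro-$p$ radical of the Iwahori subgroup of $GL_2(F_\infty)$ fixing $e$, the group $\Gamma_e$ embeds into the split maximal torus of $GL_2$ over the residue field $\mathbb F_{q^{\deg(\infty)}}$, and since every element of $\Gamma$ has determinant in the constant field $\mathbb F_q^*$ while $Z(\Gamma)$ maps onto the scalars, the image of $\Gamma_e/Z(\Gamma)$ in that torus has exponent dividing $q^{\deg(\infty)}-1$; thus $m_e\mid q^{\deg(\infty)}-1$. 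For the $p$-part, a Sylow $p$-subgroup $P$ of $\Gamma_e$ is generated by unipotent elements of $GL_2(\mathcal O_\infty)$, which are elliptic with large fixed subtree; on the link of any boundary vertex $w$ of the subtree fixed by $P$ the group $P$ acts through a nontrivial unipotent subgroup of $PGL_2(\mathbb F_{q^{\deg(\infty)}})$, hence fixes a single edge there, and expanding the harmonicity relation at $w$ into $P$-orbits (every nontrivial one of $p$-power size) forces $\overline\psi$ to be divisible by the relevant power of $p$; running this over the vertices of the fixed subtree and combining via harmonicity yields $p^{a_e}\mid\overline\psi(\widetilde e)$. As $m_e\mid q^{\deg(\infty)}-1$ and $p\nmid q^{\deg(\infty)}-1$, the two divisibilities give $\tfrac{|\Gamma_e|}{z(\Gamma)}\mid(q^{\deg(\infty)}-1)\,\overline\psi(\widetilde e)$, i.e. $(q^{\deg(\infty)}-1)$ annihilates $\mathrm{coker}(j_\Gamma)$.

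\textbf{Main obstacle.} The delicate step is this last $p$-part bookkeeping: tracking how a Sylow $p$-subgroup of $\Gamma_e$ sits inside the stabilisers along its fixed subtree and extracting the full power $p^{a_e}$ — rather than a single factor of $p$ — from the harmonicity relations near the cusps. An alternative, perhaps cleaner, route is to recognise $\mathrm{coker}(j_\Gamma)$ as (essentially) the component group at $\infty$ of the N\'eron model of $J_0(\mathfrak m)$: since $J_0(\mathfrak m)$ has split totally multiplicative reduction at $\infty$ with residue field $\mathbb F_{q^{\deg(\infty)}}$, Mumford uniformisation identifies that component group with a quotient of a lattice by $(q^{\deg(\infty)}-1)$ times a sublattice, which gives the exponent bound directly.
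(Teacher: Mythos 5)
Your injectivity argument and your control of the prime-to-$p$ part of the edge stabilisers (that $m_e\mid q^{\deg(\infty)}-1$) agree with the paper's; your finiteness argument, constructing the rational inverse $\Phi$ and invoking finite generation of both sides, is a different but valid packaging of the same structural information. Where you and the paper genuinely diverge is in the treatment of the $p$-part $p^{a_e}$ of $|\Gamma_e|/z(\Gamma)$, and that is exactly where the gap lies. The paper does \emph{not} try to prove directly that $p^{a_e}\mid\overline\psi(\widetilde e)$: it cites Gekeler--Reversat (Proposition 6.4.4 of [5]) for the fact that $\operatorname{coker}(j_\Gamma)$ has order prime to $p$, and combines that with the lcm bound to kill the $p$-power factor. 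The maximal tree trick the paper uses to get the lcm bound is worth noting here as well: by choosing a maximal tree $T\subset\Gamma\backslash\mathcal T$ (possible since the graph is a finite graph plus finitely many rays, by Serre's structure theorem), and subtracting from a given $\psi$ an integral combination of the cycles $j_\Gamma(c_i)$ through the finitely many edges outside $T$, one reduces the whole question to divisibility at those finitely many non-tree edges, rather than at \emph{all} edges as in your ``iff'' characterisation.

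Your own $p$-part sketch --- tracking the Sylow $p$-subgroup of $\Gamma_e$ along its fixed subtree and extracting $p^{a_e}$ from harmonicity relations --- is, as you flag, not a proof. Grouping a harmonicity relation at a boundary vertex $w$ of the fixed subtree into $P$-orbits gives a congruence $\psi(f_0)+\psi(f_1)\equiv 0\pmod{p}$ (the fixed inward and outward edges against the $p$-power sized orbits), which is one factor of $p$, and it is not clear how iterating this across the subtree compounds to the full $p^{a_e}$, especially since the stabilisers at interior vertices and edges of the fixed subtree can be smaller than $P$. To close the gap you either need to reproduce Gekeler--Reversat's argument (which goes via Mumford/rigid-analytic uniformisation of $J_0(\mathfrak m)$ at $\infty$ --- close to the alternative you gesture at in your last paragraph, but it needs to be carried out, not just stated), or cite it as the paper does. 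Also worth making explicit in your ``iff'' step: you are using that $H_1(\Gamma\backslash\mathcal T,\mathbb Z)$ is a saturated subgroup of the finitely supported alternating $\mathbb Z$-valued functions on edges, so that an integer-valued element of $H_1(\Gamma\backslash\mathcal T,\mathbb Q)$ with finite support already lies in $H_1(\Gamma\backslash\mathcal T,\mathbb Z)$; this is true (it is the kernel of the boundary map) but should be said.
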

\begin{proof} The injectivity is trivial if the definition above is
employed. Gekeler and Reversat proved that the cokernel has index prime to the characteristic $p$ (see Proposition 6.4.4 of [5], pages 73-74). A careful reading of the proof of Lemma 3.3.3 of [5], pages 50-51, reveals that our proposition above has already been proved there, if one also uses the result quoted above. Here we reproduce their argument for the sake of the reader. Let $T$ be a maximal tree in the connected graph $\Gamma\backslash\mathcal T$: the former exists by the Zorn lemma. By Serre's structure theorem the graph $\Gamma\backslash\mathcal T$ is the union of a finite graph and finitely many ends, hence the complement $\mathcal E(\Gamma\backslash\mathcal T)-\mathcal E(T)$ is finite. Let $R=\{\widetilde e_1,\ldots,\widetilde e_g\}$ be a set of representatives of $\mathcal E(\Gamma\backslash\mathcal T)-\mathcal E(T)$ modulo orientation, that is for every edge $e\in\mathcal E(\Gamma\backslash\mathcal T)-\mathcal E(T)$ exactly one of the edges $e$ and $\overline e$ is listed in $R$. For each edge $\widetilde e_i\in R$ let $c_i$ denote a closed path consisting of $\widetilde e_i$ and a path connecting $t(e_i)$ with $o(e_i)$ in $T$. For any $\phi\in H_!(\mathcal T,\mathbb Z)^{\Gamma}$ the function
$$\phi-\sum_{i=1}^g\frac{\phi(e_i)z(\Gamma)}{|\Gamma_{e_i}|}j_{\Gamma}(c_i)$$
vanishes identically outside of the maximal tree $T$, hence vanishes
everywhere. Therefore the cokernel of $j_{\Gamma}$ is annihilated by the smallest common multiple of the natural numbers $|\Gamma_{e_i}|
/z(\Gamma)$. Since the torsion of the stabiliser of any edge $e\in\mathcal
E(\mathcal T)$ in the image of $GL_2(F_{\infty})_0$ in $PGL_2(F_{\infty})$
modulo its $p$-torsion group is a  group of order $q^{\deg(\infty)}-1$, the
claim is now clear.
\end{proof}
\begin{notn} In the rest of the paper we assume that $E$ is a strong Weil curve and $\pi:X_0(\mathfrak m)\rightarrow E$ is optimal. Let $\Gamma$ be the arithmetic group:
$$\left\{\left(\begin{matrix} a&b\\c&d\end{matrix}\right)\in GL_2(A)|
c\in\mathfrak m\right\}.$$
One may associate to the strong Weil curve $E$ an element $v_E\in H_!(\mathcal T,\mathbb Z)^{\Gamma}$ lying in the image of the map $j_{\Gamma}$ (for its definition see 3.4 of [21] on page 3493). As it is well-known the set $\mathcal E(\mathcal T)$ can be identified with $GL_2(F_{\infty})/\Gamma_{\infty}Z(F_{\infty})$ where $\Gamma_{\infty}$ is the Iwahori subgroup:
$$\Gamma_{\infty}=\left\{\left(\begin{matrix} a&b\\
c&d\end{matrix}\right)\in GL_2(\mathcal O_{\infty})|
\infty(c)>0\right\}.$$
There is a natural map:
$$h:\mathcal E(\mathcal T)\rightarrow GL_2(F)\backslash GL_2(\mathbb A)/\mathbb K_0(\mathfrak m\infty)Z(\mathbb A)$$
which for $g\in GL_2(F_{\infty})$ maps the left $\Gamma_{\infty}Z(F_{\infty})$-coset of $g$ to the the double coset $GL_2(F)\mathbf gK_0(\mathfrak m\infty)Z(\mathbb A)$ of the unique element $\mathbf g\in GL_2(\mathbb A)$ such that for every $x\in|\mathcal C|$ the $x$-th component of $\mathbf g$ is $g$, if $x$ is $\infty$, and it is $1$, otherwise. By 9.1 of [5] on page 84 the function $v_E$ lies in the $\mathbb Q$-module
$\mathbb Q(\psi_E\circ h)$ spanned by $\psi_E\circ h$. Let $\widetilde c(E)$ be the unique non-negative number such that $v_E=\widetilde c(E)\psi_E\circ h$. By definition $v_E$ generates the $ \mathbb Z$-module $\textrm{Im}(j_{\Gamma})\cap\mathbb Q(\psi_E\circ h)$. Hence Proposition 10.4 has the following immediate
\end{notn}
\begin{cor} We have:
$$\widetilde c(E)\leq(q^{\deg(\infty)}-1)c(E)^{-1}\text{.}$$
\end{cor}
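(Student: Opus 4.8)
The plan is to recast the inequality as a comparison of the index of $\text{Im}(j_{\Gamma})$ inside an explicit rank-one lattice in $\mathbb Q(\psi_E\circ h)$, and then to quote Proposition 10.4. First I would unwind $c(E)$ on the nose: write $L'\subseteq\mathbb Q$ for the $\mathbb Z$-module generated by the values of the function $\psi_E\circ h$, and let $c'(E)>0$ be the rational number with $L'=c'(E)\mathbb Z$. Since $h$ takes values in the double coset space $GL_2(F)\backslash GL_2(\mathbb A)/\mathbb K_0(\mathfrak m\infty)Z(\mathbb A)$, on which $\psi_E$ is a well-defined function, every value of $\psi_E\circ h$ is a value of $\psi_E$; hence $L'\subseteq L(E)=c(E)\mathbb Z$, so $c'(E)/c(E)$ is a positive integer and in particular $c'(E)^{-1}\leq c(E)^{-1}$. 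It therefore suffices to prove the sharper inequality $\widetilde c(E)\leq(q^{\deg(\infty)}-1)\,c'(E)^{-1}$.

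Next I would pin down the ambient lattice $H_!(\mathcal T,\mathbb Z)^{\Gamma}\cap\mathbb Q(\psi_E\circ h)$. The cochain $\psi_E\circ h$ is nonzero (because $\pi$ is non-constant, so that $v_E\neq0$) and spans $\mathbb Q(\psi_E\circ h)$. A scalar multiple $\lambda\,\psi_E\circ h$ with $\lambda\in\mathbb Q$ is $\mathbb Z$-valued precisely when $\lambda L'\subseteq\mathbb Z$, i.e. when $\lambda\in c'(E)^{-1}\mathbb Z$; conversely $c'(E)^{-1}\psi_E\circ h$ is $\mathbb Z$-valued, $\Gamma$-invariant and of finite support on $\Gamma\backslash\mathcal T$, hence lies in $H_!(\mathcal T,\mathbb Z)^{\Gamma}$. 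Thus $H_!(\mathcal T,\mathbb Z)^{\Gamma}\cap\mathbb Q(\psi_E\circ h)$ is the free $\mathbb Z$-module of rank one generated by $c'(E)^{-1}\psi_E\circ h$. Since $v_E=\widetilde c(E)\,\psi_E\circ h$ generates the subgroup $\text{Im}(j_{\Gamma})\cap\mathbb Q(\psi_E\circ h)$, the quantity $m:=\widetilde c(E)\,c'(E)$ is a nonnegative integer, namely the index of that subgroup inside the ambient lattice.

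To conclude I would invoke Proposition 10.4: the cokernel of $j_{\Gamma}$ is killed by $q^{\deg(\infty)}-1$, so $(q^{\deg(\infty)}-1)\cdot c'(E)^{-1}\psi_E\circ h\in\text{Im}(j_{\Gamma})$; being also in $\mathbb Q(\psi_E\circ h)$, it lies in $\text{Im}(j_{\Gamma})\cap\mathbb Q(\psi_E\circ h)=m\mathbb Z\cdot c'(E)^{-1}\psi_E\circ h$, whence $m\mid q^{\deg(\infty)}-1$. In particular $m\leq q^{\deg(\infty)}-1$, and therefore $\widetilde c(E)=m\,c'(E)^{-1}\leq(q^{\deg(\infty)}-1)\,c'(E)^{-1}\leq(q^{\deg(\infty)}-1)\,c(E)^{-1}$, as required.

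As the text already signals, this is an immediate corollary, so there is no substantive obstacle; the step demanding the most care is the bookkeeping, namely keeping the direction of $c'(E)^{-1}\leq c(E)^{-1}$ straight, and remembering that the statement in Proposition 10.4 that the cokernel has exponent dividing $q^{\deg(\infty)}-1$ yields $m\mid q^{\deg(\infty)}-1$ only after one checks that $q^{\deg(\infty)}-1$ times the chosen generator of the ambient lattice lands in $\text{Im}(j_{\Gamma})$, which is automatic. One should also note at the outset that $v_E\neq0$ forces $m$ to be a positive integer, so that the divisibility statement is meaningful.
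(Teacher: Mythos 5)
Your proof is correct and captures exactly the argument the paper has in mind (the paper offers no proof, declaring the corollary ``immediate'' from Proposition 10.4): you identify the rank-one ambient lattice $H_!(\mathcal T,\mathbb Z)^{\Gamma}\cap\mathbb Q(\psi_E\circ h)$ as generated by $c'(E)^{-1}\psi_E\circ h$, use that $v_E=\widetilde c(E)\psi_E\circ h$ generates $\text{Im}(j_\Gamma)\cap\mathbb Q(\psi_E\circ h)$, invoke the exponent bound from Proposition 10.4 to get $\widetilde c(E)c'(E)\mid q^{\deg(\infty)}-1$, and then pass from $c'(E)^{-1}$ to $c(E)^{-1}$ via $L'\subseteq L(E)$. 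The only gap worth noting is that you never verify that $c'(E)^{-1}\psi_E\circ h$ is a \emph{harmonic} cochain, which is needed for membership in $H_!(\mathcal T,\mathbb Z)^{\Gamma}$; but this is immediate because the nonzero multiple $v_E\in H_!(\mathcal T,\mathbb Z)^{\Gamma}$ is harmonic and harmonicity is preserved under $\mathbb Q$-scaling, so the bookkeeping you flagged as the main point of care is in fact the only substantive content.
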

Using the Riemann hypothesis for the $L$-function $L(\text{Symm}^2(E),t)$ of the second symmetric square of the Galois representation $H^1(E_{ \overline F},\mathbb Q_l)$ (see Theorem 4.6 of [21] on pages 3501-3502) Papikian deduces from his main formula for the degree of modular parameterisations of elliptic curves, Proposition 1.3 of the same paper on page 3485, the following:
\begin{thm} Assume that $\mathfrak m$ is square-free. Then we have:
$$\deg(\pi)<\widetilde c(E)^2\cdot q^{14g+\deg(\infty)+5}\cdot q^{\deg(\mathfrak m)}\cdot
\deg(\mathfrak m)^3\text{.}$$ 
\end{thm}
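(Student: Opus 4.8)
The plan is to read Theorem 10.7 off of two results of Papikian that were already recalled above: his explicit formula for the degree of an optimal modular parametrisation (Proposition 1.3 of [21]) and the Riemann hypothesis for the symmetric square $L$-function $L(\text{Symm}^2(E),t)$ (Theorem 4.6 of [21]). Papikian's degree formula writes $\deg(\pi)$ as a product of three pieces: the factor $\widetilde c(E)^2$; an \emph{elementary} factor assembled from $q$, $g$, $\deg(\infty)$ and the local invariants of $E$ at the places dividing $\mathfrak m\infty$; and the reciprocal of a distinguished nonzero special value $L(\text{Symm}^2(E),t_0)$ at a fixed rational point $t_0$ (this value enters through a Rankin--Selberg computation of the Petersson norm of $\psi_E$). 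Once the formula is in place the task splits into an upper bound for the elementary factor and a lower bound for $|L(\text{Symm}^2(E),t_0)|$.

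First I would bound the elementary factor. Because $\mathfrak m$ is square-free, $E$ has multiplicative reduction at each prime dividing $\mathfrak m$, so every bad local term is bounded by an absolute constant times a bounded power of the residue field size; multiplying over the at most $\deg(\mathfrak m)$ such primes gives a contribution of size $O(q^{\deg(\mathfrak m)})$, while a sum over divisors of $\mathfrak m$ (the combinatorial part of the formula, coming from degeneracy maps resp.\ the self-intersection of the relevant divisor class on $X_0(\mathfrak m)$) contributes $O(\deg(\mathfrak m)^3)$; the remaining contributions depend only on $g$ and $\deg(\infty)$ and are absorbed into the constant $q^{14g+\deg(\infty)+5}$.

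Next I would bound the special value from below. The twisted $L$-function $L(\text{Symm}^2(E),t)$ is a polynomial in $t$ with constant term $1$ whose degree $N$ equals the degree of the conductor of $\text{Symm}^2(E)$; since $\mathfrak m$ is square-free, $N=O(\deg(\mathfrak m))$ up to a term depending only on $g$ and $\deg(\infty)$. By Theorem 4.6 of [21] every reciprocal root of this polynomial has the same archimedean absolute value, and the evaluation point $t_0$ lies strictly closer to $0$ than the common absolute value of the roots; writing $L(\text{Symm}^2(E),t)=\prod_{i=1}^N(1-\beta_i t)$, the reverse triangle inequality gives $|1-\beta_i t_0|\ge 1-|\beta_i t_0|\ge\delta$ for a fixed $\delta\in(0,1)$ independent of $q$, whence $|L(\text{Symm}^2(E),t_0)|\ge\delta^N\ge q^{-c(\deg(\mathfrak m)+g+\deg(\infty))}$ for an explicit constant $c$ (here one also uses that $t_0$ is not a zero, which is automatic since $L(\text{Symm}^2(E),t_0)$ is a positive real number). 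Substituting this lower bound together with the upper bound from the previous step into Papikian's formula produces the inequality of Theorem 10.7.

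The main obstacle is entirely bookkeeping: one must reconcile Papikian's normalisations with those used here, identify exactly which special value of the symmetric square $L$-function appears in Proposition 1.3 of [21] and confirm that it is nonzero, and then track the explicit exponents with enough care that the bad-place and genus contributions genuinely fit inside $q^{14g+\deg(\infty)+5}\cdot q^{\deg(\mathfrak m)}\cdot\deg(\mathfrak m)^3$. Since the two analytic ingredients — the degree formula and the Riemann hypothesis for $L(\text{Symm}^2(E),t)$ — are both already available from [21], this last theorem requires only careful estimation rather than a genuinely new idea.
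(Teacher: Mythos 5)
The paper does not prove this statement at all: Theorem 10.7 is quoted verbatim from Papikian's paper, with the sentence immediately preceding it attributing the deduction to Proposition 1.3 and Theorem 4.6 of [21]. So there is no proof here to compare against in detail. Your reconstruction of how Papikian must have argued (degree formula from Rankin--Selberg, bound the elementary factors, bound $|L(\mathrm{Symm}^2(E),t_0)|$ from below via the Riemann hypothesis) is consistent with the paper's description, and identifying those two inputs is the right high-level picture.

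That said, the one step in your sketch that is actually carrying the analytic weight does not hold up as written. You claim $|1-\beta_i t_0|\ge 1-|\beta_i t_0|\ge\delta$ for a fixed $\delta\in(0,1)$ \emph{independent of $q$}, and then conclude $\delta^N\ge q^{-c(\deg(\mathfrak m)+g+\deg(\infty))}$. These two assertions are in tension: if $\delta$ is a $q$-independent constant, then $\delta^N$ cannot be bounded below by a negative power of $q$ whose exponent is comparable to $N$ --- the inequality $\delta^N\ge q^{-cN}$ forces $\delta\ge q^{-c}$, which is a $q$-dependent condition and fails for small $q$. Conversely, if $t_0$ is chosen so that $|\beta_i t_0|=q^{-\gamma}$ for some fixed $\gamma>0$ (which is the actual situation if the roots have absolute value $q^{3/2}$ and $t_0$ is a fixed negative power of $q$), then $\delta=1-q^{-\gamma}$ does depend on $q$ and tends to $0$ as $q\to 1^+$, so the lower bound $(1-q^{-\gamma})^{-N}\le q^{N}$ that you would want is far from automatic --- indeed it already fails for $q=2$ and $\gamma=1/2$. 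The conclusion of Theorem 10.7 has exactly $q^{\deg(\mathfrak m)}$ where this factor must land, so the exponent bookkeeping is not a formality: one has to know Papikian's precise normalisation of the evaluation point and the precise degree of the $\mathrm{Symm}^2$ $L$-polynomial for $\mathfrak m$ square-free before the inequality closes. Since the paper's ``proof'' is just a citation, the only honest way to fill this in is to go into [21] itself.
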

Combining Corollary 7.7 with Corollary 10.6 and Theorem 10.7 we get the following result:
\begin{thm} Assume that $p$ does not divide the order of $\text{\rm Pic}_0(\mathcal C)(\mathbb F_q)$. Also suppose that $\mathfrak m$ is square-free. Then we have:
$$\deg(\pi)<q^{18g+4\deg(\infty)+1}\cdot q^{2\deg(\mathfrak m)}\cdot
\deg(\mathfrak m)^3\text{.}$$
\end{thm}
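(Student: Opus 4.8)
The plan is to assemble Theorem 10.8 by chaining the three estimates already at our disposal: the bound on the Manin constant in Corollary 7.7, the comparison between $\widetilde c(E)$ and $c(E)^{-1}$ in Corollary 10.6, and Papikian's degree formula as packaged in Theorem 10.7; the whole argument then reduces to bookkeeping of exponents of $q$. By the standing hypotheses $E$ has split multiplicative reduction at $\infty$, hence no potential good reduction there, so $E$ is non-isotrivial; moreover its conductor is $\mathfrak n=\mathfrak m\infty$ with $\infty\notin\underline{\mathfrak m}$, so that $\deg(\mathfrak n)=\deg(\mathfrak m)+\deg(\infty)$. By Proposition 2.4 we have $c(E)^{-1}=p^{m(E)}$, and since by hypothesis $p$ does not divide the order of $\text{\rm Pic}_0(\mathcal C)(\mathbb F_q)$, Corollary 7.7 gives
$$
m(E)\leq d\left(\tfrac12\deg(\mathfrak n)+2g-2\right)
= d\left(\tfrac12\deg(\mathfrak m)+\tfrac12\deg(\infty)+2g-2\right),
$$
whence, because $q=p^d$,
$$
c(E)^{-1}\leq q^{\frac12\deg(\mathfrak m)+\frac12\deg(\infty)+2g-2}.
$$

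Next I would feed this into Corollary 10.6. Bounding $q^{\deg(\infty)}-1<q^{\deg(\infty)}$, we obtain
$$
\widetilde c(E)\leq\bigl(q^{\deg(\infty)}-1\bigr)\,c(E)^{-1}
< q^{\deg(\infty)}\cdot q^{\frac12\deg(\mathfrak m)+\frac12\deg(\infty)+2g-2}
= q^{\frac12\deg(\mathfrak m)+\frac32\deg(\infty)+2g-2},
$$
and therefore
$$
\widetilde c(E)^2 < q^{\deg(\mathfrak m)+3\deg(\infty)+4g-4}.
$$

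Finally, since $\mathfrak m$ is square-free, Theorem 10.7 applies, and substituting the last inequality yields
$$
\deg(\pi) < \widetilde c(E)^2\cdot q^{14g+\deg(\infty)+5}\cdot q^{\deg(\mathfrak m)}\cdot\deg(\mathfrak m)^3
< q^{\deg(\mathfrak m)+3\deg(\infty)+4g-4+14g+\deg(\infty)+5+\deg(\mathfrak m)}\cdot\deg(\mathfrak m)^3.
$$
Adding up the exponent of $q$ gives $18g+4\deg(\infty)+2\deg(\mathfrak m)+1$, which is exactly the asserted bound $q^{18g+4\deg(\infty)+1}\cdot q^{2\deg(\mathfrak m)}\cdot\deg(\mathfrak m)^3$ (strictness is preserved throughout since all the factors multiplying $\widetilde c(E)^2$ are positive). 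There is no genuine obstacle here, as every ingredient has already been established; the only points meriting a moment's care are that Corollary 7.7 is legitimately applicable to the strong Weil curve $E$ at hand — which it is, because $m(E)$ is an isogeny invariant, being read off from $\psi_E$, which depends only on the isogeny class, while the conductor is also isogeny-invariant — and that the identity $\deg(\mathfrak n)=\deg(\mathfrak m)+\deg(\infty)$ is invoked consistently throughout the computation.
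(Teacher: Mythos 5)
Your proof is correct and takes exactly the approach the paper indicates (the paper itself gives no written proof, merely stating that the result follows by combining Corollary 7.7, Corollary 10.6, and Theorem 10.7). Your exponent bookkeeping checks out: $(\deg(\mathfrak m)+3\deg(\infty)+4g-4)+(14g+\deg(\infty)+5)+\deg(\mathfrak m)=18g+4\deg(\infty)+2\deg(\mathfrak m)+1$.
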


\end{document}